\newcommand{\p}{\mathfrak{p}}
\newcommand{\rk}{\operatorname{rk}}
\newcommand{\catBun}{{\mathcal{B}un}}
\renewcommand{\Re}{\operatorname{Re}}
\renewcommand{\le}{\leqslant}
\renewcommand{\ge}{\geqslant}
\newcommand{\Ext}{\mathrm{Ext}}
\newcommand{\M}{\mathcal{M}}
\newcommand{\Mer}{\mathcal{M}er}
\newcommand{\SH}{\mathcal{SH}}
\newcommand{\Sh}{\mathrm{Sh}}
\newcommand{\shufflemult}{\, \circledS\, }
\newcommand{\CT}{\operatorname{CT}}
\newcommand{\Gr}{\operatorname{Gr}}
\newcommand{\Supp}{\operatorname{Supp}}
\newcommand{\Ch}{\operatorname{Ch}}
\title{The spherical Hall algebra of $\overline{\Spec(\mathcal{O}_K})$}
\author{Benjamin Li, Luis Modes\\Mentor: Haoshuo Fu\\Project suggested by: Zhiwei Yun}
\date{SPUR 2024}
\begin{document}

\setcounter{section}{0}

\setstretch{1}

\maketitle

\setcounter{section}{-1}

\begin{abstract}
    We generalize the result of M. Kapranov, O. Schiffmann, and E. Vasserot \cite{originalpaper} by showing that, for a number field $K$ with class number 1, the spherical Hall algebra of $\overline{\Spec(\mathcal{O}_K)}$, where $\OO_K$ is the ring of integers of $K$, is isomorphic to the Paley-Wiener shuffle algebra associated to a Hecke $L$-function corresponding to $K$.
\end{abstract}

\tableofcontents


\section{Introduction}

\blank M. Kapranov, O. Schiffmann, and E. Vasserot showed in \cite{originalpaper} that the spherical Hall algebra of $\overline{\Spec(\ZZ)}$ is isomorphic to the Paley-Wiener shuffle algebra associated to the global Harish-Chandra function $\Phi(s)$, defined as $\Phi(s)=\frac{\zeta^*(s)}{\zeta^*(s+1)}$, where $\zeta^*(s)=\pi^{-\frac{s}{2}}\Gamma(\frac{s}{2})\zeta(s)$ is the completed zeta function. 

In order to define the Hall algebra of $\overline{\Spec(\ZZ)}$, \textit{vector bundles on }$\overline{\Spec(\ZZ)}$ are defined as triples $E=(L,V,q)$ where $V=\RR^n$, $L\subset V$ is a $\ZZ$-lattice of maximal rank, and $q$ is a positive-definite quadratic form. The \textit{rank of }$E$ is defined as $\rk(E)=\dim_\RR(V)=\rk_\ZZ(L)$. These vector bundles form a category $\catBun$. The Hall algebra of $\overline{\Spec(\ZZ)}$ is defined as $H=\bigoplus_{n\geqslant 0}H_n$, where $H_n=C_c^\infty(\Bun_n)$ and $\Bun_n$ consists of the isomorphism classes of rank $n$ vector bundles. For this,  it is shown that $\Bun_n$ can be identified with $\GL_n(\ZZ)\doublecoset{\GL_n(\RR)}O_n$ and $\GL_n(\QQ)\doublecoset{\GL_n(\AA_\QQ)}O_n\times\prod_p\GL_n(\ZZ_p)$, and that it has a $C^\infty$-orbifold structure. A \textit{Hall product} $*$ is constructed and proven to make $H$ into a graded associative algebra with unit 1. The \textit{spherical Hall algebra} $SH$ is defined as the subalgebra generated by $H_1\subset H$. It is shown that we can identify $H_1$ with $C_c^\infty(\RR_+)$, so $SH$ can be seen as a subset of $\bigoplus_{n\geqslant0}C_c^\infty(\RR_+^n)$.

On the other hand, $\bigoplus_{n\geqslant 0}\Mer(\CC^n)$ can be made into a graded associative algebra with unit 1 via the \textit{shuffle product} $\shufflemult$, defined as
\[
F \shufflemult F' = \sum_{w \in \Sh(m, n)} w(F \ten F') \cdot \Phi_{w\inv}.
\]

Here, $\Sh(m,n)$ is the set of $(m,n)$-shuffles, and $\Phi_{w\inv}(s_1, \dots, s_{m+n}) = \prod_{i < j \text{ and } w(i) > w(j)} \Phi(s_{w(i)} - s_{w(j)})$. The \textit{Paley-Wiener shuffle algebra associated to} $\Phi$ is denoted by $\SH(\Phi)_\PW$ and is defined as the subalgebra generated by the subspace $\PW(\CC)\subset\bigoplus_{n\geqslant 0}\Mer(\CC^n)$ of Paley-Wiener functions.

The main result of \cite{originalpaper} shows that $SH\simeq\SH(\Phi)_\PW$. 

\begin{theorem}
The Mellin transform $\M:SH_1=C_c^\infty(\RR_+)\overset{\sim}{\longrightarrow}\PW(\CC)$ extends to an isomorphism of algebras $SH\rightarrow\SH(\Phi)_\PW$. 
\end{theorem}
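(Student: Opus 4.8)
The plan is to follow the strategy of \cite{originalpaper}, isolating the two places where the arithmetic of $K$ enters: the structure of $\Bun_1\cong\Pic(\overline{\Spec(\OO_K)})\cong\RR_+$ and the constant term (Gindikin--Karpelevich) computation that produces the factor $\Phi$. Since both $SH$ and $\SH(\Phi)_\PW$ are, by construction, generated by their degree-$1$ parts, and since $\M\colon SH_1\overset{\sim}{\longrightarrow}\PW(\CC)$ is an isomorphism, it suffices to prove two things: (i) the iterated Hall product $SH_1^{\otimes n}\to SH_n$ is intertwined by $\M^{\otimes n}$ with the iterated shuffle product $\PW(\CC)^{\otimes n}\to\SH(\Phi)_\PW$; and (ii) the resulting algebra homomorphism $SH\to\SH(\Phi)_\PW$ is injective, surjectivity being automatic since $\SH(\Phi)_\PW$ is generated by $\PW(\CC)=\M(SH_1)$. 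Step (i) is the computational heart, and step (ii) is the main obstacle.

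For (i), I would first write the Hall product of rank-$1$ classes explicitly. Using the adelic uniformization $\Bun_n\cong\GL_n(K)\backslash\GL_n(\AA_K)/\mathbf{K}$ (with $\mathbf{K}$ the standard maximal compact), the pairing of a Hall product $f_1*\cdots*f_n$ of elements of $H_1$ against a test function on $\Bun_n$ unfolds, via reduction theory and the Bruhat decomposition, into an integral over the upper-triangular adelic subgroup of $\prod_i f_i$ evaluated on the successive rank-$1$ subquotients, weighted by the volume factors coming from the $\Ext^1$-groups of line bundles on $\overline{\Spec(\OO_K)}$; the generating function of these integrals over all rank-$1$ inputs is essentially a degenerate Eisenstein series on $\GL_n/K$ induced from the Borel. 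Applying the Mellin transform in the rank-$1$ variables then converts this integral identity into the defining formula of $\shufflemult$: the sum over Bruhat cells becomes the sum over shuffles $w\in\Sh(m,n)$, and the intertwining factor attached to the cell of $w$ is precisely $\Phi_{w^{-1}}=\prod_{i<j,\,w(i)>w(j)}\Phi(s_{w(i)}-s_{w(j)})$. The arithmetic input here is the Langlands--Gindikin--Karpelevich constant term formula for $\GL_n$ over the number field $K$, which evaluates the standard intertwining operators as products of the local factors $\xi_K(s_i-s_j)/\xi_K(s_i-s_j+1)=\Phi(s_i-s_j)$, where $\xi_K$ is the completed Hecke $L$-function of $K$; this replaces the hand computation of \cite{originalpaper} for $K=\QQ$ by the general Eisenstein theory. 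One must also check that the Paley--Wiener property is preserved, i.e.\ that the symmetrized product of Paley--Wiener functions by the meromorphic twists $\Phi_{w^{-1}}$ lies again in $\PW(\CC)$; as in \cite{originalpaper} this follows from the location of the poles and zeros of $\xi_K$ together with a contour-shifting argument.

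For (ii), I would realize $SH_n$ as an algebra of functions on $\Bun_n$ and exploit the structure of its ``generic locus'': a rank-$n$ bundle isomorphic to $L_1\oplus\cdots\oplus L_n$ with $\deg L_1\gg\deg L_2\gg\cdots\gg\deg L_n$ admits, up to isomorphism, essentially a unique filtration with rank-$1$ subquotients and no compatible nonsplit extensions, so on this locus the $n$-fold Hall product of $f_1,\dots,f_n\in H_1$ reduces to $\prod_i f_i$ evaluated on the ordered degree data. Restriction to this locus therefore gives an algebra map from $SH$ to functions on $\RR_+^n$ sending the Hall product to the ordered pointwise product, which under $\M$ matches the ``leading term'' $w=\mathrm{id}$ of the shuffle product (whose twist $\Phi_{\mathrm{id}}$ is the empty product $1$), while the shuffles $w\neq\mathrm{id}$ contribute only on strictly smaller Harish-Chandra strata. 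An induction on $n$, using that the lower strata are governed by $SH_{n'}$ with $n'<n$, then shows that an element of $SH$ mapping to $0$ in $\SH(\Phi)_\PW$ restricts to $0$ on every stratum and hence vanishes. Making ``$\deg L_1\gg\cdots\gg\deg L_n$'' and the stratification quantitative requires the reduction-theory and Harish-Chandra estimates on $\Bun_n$ over $K$ and a careful analysis of the orbifold measure near the stratum boundaries; this, together with checking that the archimedean integrals defining the Hall product over $K$ converge uniformly enough to license the unfolding in step (i), is where I expect the bulk of the technical work to lie.
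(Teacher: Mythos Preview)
Your step~(i) is on the right track and matches the paper's strategy: the homomorphism is $\Ch_n=\M\circ\widetilde{\CT}_n$, Mellin transform of the \emph{twisted constant term}, and the fact that it carries the Hall product to the shuffle product is exactly the Gindikin--Karpelevich computation you describe (in the paper this is the intertwiner calculation of \S\ref{sec: intertwiner} combined with Propositions~\ref{prop: twist and intertwiner}, \ref{prop: similar}, and \ref{prop: 5.21}). One correction of framing: define $\Ch_n$ directly on all of $H_n$ as constant-term-then-Mellin, rather than trying to ``extend $\M^{\otimes n}$'' from degree~$1$; otherwise you acquire a separate well-definedness obligation on $SH_n$ that the direct definition avoids entirely.

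Your step~(ii), however, diverges from the paper and is where the genuine gap lies. The paper proves injectivity of $\Ch$ on $SH$ by a short adjointness argument (Propositions~\ref{prop: adjoint1}--\ref{prop: adjoint2}): $\widetilde{\CT}_n$ is adjoint to the multiplication map $*_{1^n}:H_1^{\otimes n}\to H_n$ with respect to the natural $L^2$ inner products, so for $f=*_{1^n}(\varphi)\in SH_n$ one has $\langle\varphi,\widetilde{\CT}_n(f)\rangle=\langle f,f\rangle_H>0$ whenever $f\neq0$, whence $\widetilde{\CT}_n(f)\neq0$ and $\Ch_n(f)\neq0$. This is the standard Eisenstein/constant-term adjunction and replaces your entire stratification scheme in two lines.

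Your proposed argument, by contrast, is not well-formed as written. The claim that ``shuffles $w\neq\mathrm{id}$ contribute only on strictly smaller Harish-Chandra strata'' conflates the two sides: every shuffle term $w(F)\cdot\Phi_{K,w^{-1}}$ is a meromorphic function on the same $\CC^n$, not something supported on a lower stratum of $\Bun_n$, so there is no filtration on the target that lets you peel off the $w=\mathrm{id}$ term. Likewise ``lower strata are governed by $SH_{n'}$ with $n'<n$'' would require a precise compatibility between a Harder--Narasimhan-type stratification of $\Bun_n$ and constant terms along non-minimal parabolics, which you have not set up and which is far heavier than the problem requires. Replace (ii) with the adjointness argument and the proof goes through.
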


\noindent The explicit isomorphism $\Ch:SH\overset{\sim}{\longrightarrow}\SH(\Phi)_\PW$ is defined as $\Ch=\M\circ\~\CT$, where $\M$ is the \textit{Mellin transform} $\M:C^\infty(\RR_+^n)\to \PW(\CC^n)$ given by $\M(f)(s)=\int_{a\in\RR^n_+}f(a)a^s\frac{\diff a}{a}$, and $\~\CT:C_c^\infty(\RR_+^n)\to C^\infty(\RR_+^n)$ is a twisted version of the \textit{constant term map} $\CT:H_n=C_c^\infty(\GL_n(\ZZ)\doublecoset{\GL_n(\RR)}O_n)\to C^\infty(\RR_+^n)$, which takes an automorphic form on $\GL_n$ and yields its constant term.

Note that, in particular, $\ZZ$ is the ring of integers of the field $\QQ$. In our paper, we generalize this result for a number field $K$ with class number 1, that is, such that its ring of integers $\OO_K$ is a principal ideal domain. We follow \cite{originalpaper} closely.

We consider the real and complex Archimedean places $\nu\in\s$ of $K$, which correspond to embeddings $\nu:K\to \CC$.
From this, we generalize $\ZZ\subset\RR$ to $\OO_K\subset R=\prod_{\nu\in\s}R_\nu$, where $R_\nu=\RR$ if $\nu\in\s_\RR$ and $R_\nu=\CC$ if $\nu\in\s_\CC$, and the embedding is given by $a\mapsto(\nu(a))_{\nu\in\s}$.
In our definition of vector bundles over $\overline{\Spec(\OO_K)}$, $E=(L,V,q)$, where $V=R^n$, $L\subset R^n$ is an $\OO_K$-lattice of maximal rank, and $q=\prod_{\nu}q_\nu$, where $q_\nu$ is a positive-definite quadratic form if $\nu\in\s_\RR$ and a positive-definite Hermitian form if $\nu\in\s_\CC$.
From this, we can define $\Bun_n$, which will now be identified with $$\GL_n(\OO_K)\doublecoset{\GL_n(R)} \K_n= \GL_n(K)\doublecoset{\GL_n(\A_K)} \widehat{\K_n},$$where $\K_n=\prod_{\nu\in\s}O_\nu$ and $\widehat{\K_n}=\prod_{\nu\in\s}O_\nu\times\prod_\p \GL_n(\OO_{K, \p})$.
Here, $O_\nu$ is the orthogonal group for $\nu\in\s_\RR$ and the unitary group for $\nu\in\s_\CC$, and $\p$ goes over the prime ideals of $\OO_K$ with $\OO_{K, \p}$ being the $\p$-adic integers.
In the same  way as before, we define the Hall algebra of $\overline{\Spec(\OO_K)}$, and the spherical Hall algebra $SH$ is again defined as the subalgebra generated by $H_1\subset H$. However, this time we will be able to identify $SH$ as a subset of $\bigoplus_{n\geqslant0}C_c^\infty(\B^n)$, where $\B = \Bun_1 = \OO_K^\times\backslash\prod_{\nu\in\s}\RR_+^\times \simeq \OO_K^\times\backslash\prod_{\nu\in\s} \RR$ via taking logarithms.
We know (say, from \cite{milne}) that $\Lambda = \OO_K^\times$ is (non-canonically) isomorphic to $\ZZ^{r_1+r_2-1}$, where $r_1 = \#\s_\RR$ and $r_2 = \#\s_\CC$ are the number of real and complex places, respectively. 
From this, $$\B=\OO_K^\times\backslash\prod_{\nu\in\s}\RR_+^\times\simeq(\RR^{r_1+r_2-1}/\Lambda)\times\RR_+ \simeq (\RR/\ZZ)^{r_1+r_2-1}\times\RR_+.$$As a result, our Fourier transform will now have the form $\F:C_c^\infty(\B^n)\to \PW((\Lambda^*\times\CC)^n)$, where $$\F f(\lambda^*,s) = \int_{x \in D} f(x) \abs{x}_\B^s x^{2 \pi i \lambda^*} \frac{\diff x}{\abs{x}_\B}.$$Here, $|x|_\B=\prod_{i=1}^{r_1}x_i\prod_{j=r_1+1}^{r_1+r_2}x_j^2$ and $D$ is a fundamental domain for $\B^n$. We modify the constant term map $\~\CT:C_c^\infty(\B^n)\to C^\infty(\B^n)$ and the homomorphism $\Ch=\F\circ\~\CT$ accordingly. Instead of getting $\zeta(s)$ in our computations, we end up getting a Hecke $L$-function corresponding to $K$ (for the definition and properties of this function, see \cite{neukirch}), which we call $L_K(\lambda^*,s)$, and we define $\Phi_K(\lambda^*,s)$ accordingly. After making $\bigoplus_{n\geqslant 0}\Mer((\Lambda^*\times\CC)^n)$ into an algebra via $\shufflemult$ and defining the \textit{Paley-Wiener shuffle algebra with respect to }$\Phi_K(\lambda^*,s)$, which we denote by $\SH(\Phi_K)_\PW$, as the subalgebra generated by $\PW(\Lambda^*\times\CC)$, we prove the following main result. 

\begin{theorem} The map $\Ch:SH\rightarrow\SH(\Phi_K)_\PW$ is an isomorphism of algebras. 
\end{theorem}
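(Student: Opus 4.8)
The plan is to follow the strategy of \cite{originalpaper} essentially verbatim, replacing $\QQ$ by $K$ throughout, and to isolate the places where the arithmetic of a general class-number-one field $K$ genuinely differs from that of $\ZZ$. The proof splits into two independent assertions: that $\Ch$ is a well-defined algebra homomorphism, and that it is bijective onto $\SH(\Phi_K)_\PW$. For the homomorphism property, the key computation is the analogue of the Gindikin--Karpelevich / Langlands constant-term formula: I would compute $\~\CT$ applied to a Hall product $f * f'$ with $f \in H_m$, $f' \in H_n$ by unfolding the definition of $*$ on $\GL_{m+n}$, decomposing the relevant double coset space via the Bruhat decomposition of $\GL_{m+n}(K)$ with respect to the $(m,n)$-parabolic, and showing that the sum over Weyl coset representatives $w \in \Sh(m,n)$ produces exactly the shuffle sum, with each intertwining integral over the unipotent radical contributing the factor $\Phi_K(\lambda^*_{w(i)} - \lambda^*_{w(j)}, s_{w(i)} - s_{w(j)})$ for each inversion of $w$. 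This is where the Hecke $L$-function $L_K$ enters: the local integrals at the finite places over $\GL_n(\OO_{K,\p})$, together with the Archimedean integrals over $R = \prod_\nu R_\nu$, assemble into the completed Hecke $L$-function $L_K(\lambda^*, s)$ rather than $\zeta^*(s)$, and one must check that the ratio $\Phi_K(\lambda^*,s) = L_K(\lambda^*,s)/L_K(\lambda^*, s+1)$ (suitably normalized) is precisely what appears. Reducing to rank-two intertwiners via a standard $\SL_2$-type argument keeps the computation manageable.

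Second, I would establish that $\Ch$ lands in $\SH(\Phi_K)_\PW$ and is an isomorphism onto it. Injectivity should follow from injectivity of the Fourier transform $\F$ together with the fact that $\~\CT$ is injective on $SH$ — the latter because an element of the spherical Hall algebra is determined by its constant terms, a consequence of the theory of Eisenstein series / the fact that $SH$ is generated in degree one where $\~\CT$ is the identity. For surjectivity, note $\Ch$ restricts on $SH_1 = C_c^\infty(\B)$ to the Fourier transform $\F$, which by the Paley--Wiener theorem on the compact-times-$\RR_+$ group $\B \simeq (\RR/\ZZ)^{r_1+r_2-1} \times \RR_+$ is an isomorphism onto $\PW(\Lambda^* \times \CC)$ — here I need the Paley--Wiener theorem for this particular group, combining the classical Euclidean Paley--Wiener statement in the $\RR_+$-direction with Fourier series in the toral directions. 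Since $SH$ is generated by $SH_1$ and $\SH(\Phi_K)_\PW$ is by definition generated by $\PW(\Lambda^* \times \CC)$, and $\Ch$ is a homomorphism matching the generators, surjectivity is immediate once the homomorphism property is in hand.

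The main obstacle I expect is the constant-term/shuffle computation, specifically verifying that the Archimedean factors work out for a mix of real and complex places. In the $\QQ$ case there is a single real place contributing $\pi^{-s/2}\Gamma(s/2)$; here each real place contributes such a Gamma factor and each complex place contributes a factor of the form $(2\pi)^{-s}\Gamma(s)$ (up to normalization), and these must reassemble into the Archimedean part of the completed Hecke $L$-function attached to the Hecke character $\abs{x}_\B^s x^{2\pi i \lambda^*}$. Getting the normalizations of $\abs{\cdot}_\B$ (with the square on complex coordinates), of the measure $\diff x / \abs{x}_\B$, and of the Fourier dual variable $\lambda^* \in \Lambda^*$ all mutually consistent — so that the functional equation and analytic continuation of $L_K$ match what the Hall-algebra side predicts — is the delicate bookkeeping that must be done carefully. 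A secondary subtlety is the role of the class number one hypothesis: it is what makes $\Bun_n$ a single adelic double coset (so that the identification $\GL_n(\OO_K)\doublecoset{\GL_n(R)}\K_n = \GL_n(K)\doublecoset{\GL_n(\A_K)}\widehat{\K_n}$ holds without a disjoint union over ideal classes), and I would flag exactly where this is used so the argument does not silently break for general $K$.
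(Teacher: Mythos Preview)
Your proposal is correct and follows essentially the same route as the paper: the Gindikin--Karpelevich computation of the intertwiner on exponential characters at each place (real, complex, finite), assembled into $\Phi_K$, then reduced from general $w$ to simple reflections; the constant-term formula $\~\CT(f'*f'') = \sum_{w\in\Sh}M_{w^{-1}}(\~\CT f'\otimes\~\CT f'')$ via Bruhat cells; and surjectivity from Paley--Wiener in degree one plus the homomorphism property. The one place where the paper is slightly sharper than your sketch is injectivity of $\~\CT$ on $SH$: rather than appealing to general Eisenstein theory, the paper observes that $\~\CT_n$ is the Hermitian adjoint of the multiplication map $*_{1^n}$, so for $f=*_{1^n}(\varphi)\neq 0$ one has $\langle\varphi,\~\CT_n(f)\rangle=\langle f,f\rangle_H>0$, forcing $\~\CT_n(f)\neq 0$ directly.
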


\bigskip

\noindent\textbf{Structure of the paper}

In section \ref{sec: vector bundles}, we define vector bundles on $\overline{\Spec(\OO_K)}$, morphisms between them, their rank, and their degree. We prove that we have bijections $$\Bun_n=\GL_n(\OO_K)\doublecoset{\GL_n(R)} \K_n= \GL_n(K)\doublecoset{\GL_n(\A_K)} \widehat{\K_n}$$Afterwards, we define \textit{subbundles} $E'\subset E$, which we will use for our definition of Hall product. In section \ref{sec: Hall algebra}, we define the Hall product $*:H_m\otimes_\CC H_n \to H_{m+n}$ as $$(f*g)(E)=\sum_{\substack{E'\subset E\\\rk(E')=m}}\deg(E')^{\frac{n}{2}}\deg(E/E')^{-\frac{m}{2}}\cdot f(E')g(E/E')$$and we show that it makes $H=\bigoplus_{n=0}^\infty H_n$ into a graded associative algebra with unit 1. We define the spherical Hall algebra $SH$. 

In section \ref{sec: L-function}, we define the set $\Mer((\Lambda^* \times \CC)^n)$ of meromorphic functions on $(\Lambda^* \times \CC)^n$, and we define $L_K(\lambda^*,s)$, a \textit{Hecke} $L$\textit{-function corresponding to }$K$.
Then, we define $\Phi_K(s,\lambda^*)$ as $\Phi_K(\lambda^*,s)=\frac{L_K^*(\lambda^*,s)}{L_K^*(\lambda^*,s+1)}$, where $L_K^*(\lambda^*,s)=\pi^{-(r_1/2)s} (2\pi)^{-r_2 s}\Gamma_K(\lambda^*, s)L_K(\lambda^*,s)$.  In section \ref{sec: shuffle algebra}, using $\Phi_K(\lambda^*,s)$, we define the shuffle multiplication $\shufflemult$, which makes $\bigoplus_{n\geqslant0}\Mer((\Lambda^*\times\CC)^n)$ into a graded associative algebra with unit 1. We then define the subalgebra $\SH(\Phi_K)_\PW$.

In section \ref{sec: Fourier transform}, we define Fourier transform $\F$ and its inverse transform $\G$. In section \ref{sec: intertwiner}, we define $M_w:C_c^\infty(\B^n)\to C^\infty(\B^n)$, the principal series intertwiner, as $$(M_w\vp)(g)=\int_{u\in U_w(\AA_K)\backslash U(\AA_K)}\varphi(wug)\diff u.$$Here, $w$ is a permutation and $U_w=U\cap(w^{-1}Uw)$. This operator will allow us to connect the Hall product, the Fourier transform, and the constant term, which we will define next.

In section \ref{sec: isomorphism}, we introduce the \textit{constant term} $\CT_n:H_n\to C^\infty(\B^n)$ as $$\CT_n(a_1,\dots,a_n)=\int_{u\in U_n(\OO_K)\backslash U_n(R)}f(u\cdot \diag(a_1,\dots,a_n))\diff u=\int_{u_\AA\in U_n(K)\backslash U_n(\AA_K)}f(u_\AA\cdot \diag(a_1,\dots,a_n))\diff u_\AA$$and its twist $\~\CT$ as $\~\CT(f)(a)=\CT(f)(a)\cdot \delta(a)^{\frac{1}{2}}$, where $\delta(a)$ is the Iwasawa Jacobian. We will use these to construct the map $\Ch_n=\F\circ\~\CT_n$. We will show that $\Ch:SH\to \SH(\Phi_K)_\PW$ is an isomorphism. 

\bigskip

\noindent\textbf{Notation}

We use $\AA_K$ to denote the adelic ring of the number field $K$ and $\N$ to denote the ideal norm. We use $z$ to denote a tuple $((\lambda^*_1, s_1), \ldots,(\lambda^*_n, s_n))\in(\Lambda^*\times\CC)^n$. To denote the set $\{1,\dots,n\}$ by $\db{1,n}$, and we use $\# S$ to denote the cardinality of the set $S$. We use $p^\ZZ$ to denote the set of integer powers of $p$. We write $A_n$ for the subgroup of diagonal matrices in $\GL_n$.

\bigskip

\noindent\textbf{Acknowledgements}

Benjamin Li and Luis Modes would like to thank their mentor, Haoshuo Fu, for his patient and continuous support during this research and at the time of writing this paper. We are also indebted to Zhiwei Yun, who proposed this research project, and to David Jerison and Jonathan Bloom, who provided helpful insights and suggestions. All authors were funded by the MIT Department of Mathematics
through its Summer Program in Undergraduate Research (SPUR), to which we are also deeply grateful.


\section{Vector bundles on $\overline{\Spec(\mathcal{O}_K)}$}
\label{sec: vector bundles}

\blank Let $K$ be a number field such that its ring of integers $\OO_K$ is a principal ideal domain. In other words, the class number of $K$ is 1. Let $\s$ be the set of real and complex Archimedean places of $K$. Recall that a place $\nu\in\s$ corresponds to an embedding $\nu:K\to \CC$ and a norm $|\cdot|_\nu:\nu(K)\to\RR_+$. Let $\s_\RR$ and $\s_\CC$ denote the sets of real and complex places, respectively, and let $r_1 = \#\s_\RR$ and $r_2 = \#\s_\CC$. Let $R=\prod_{\nu\in\s}R_\nu$, where $R_\nu=\RR$ for $\nu\in\s_\RR$ and $R_\nu=\CC$ for $\nu\in\s_\CC$.

A \textit{vector bundle of rank }$n$\textit{ over }$\overline{\Spec(\OO_K)}$ is a triple $E=(L,V,q)$, where $V=R^n$, $L\subset V$ is an $\OO_K$-lattice of maximal rank (i.e., a free $\OO_K$-module of rank $n$  such the map $L\otimes_{\OO_K}R\rightarrow V$ induced by the inclusion $L\subset V$ is an isomorphism of $R$-modules), and $q=\prod_{\nu\in\s}q_\nu$, where the $q_\nu$ is positive-definite quadratic form on $\RR^n$ for $\nu\in\s_\RR$ and a positive-definite Hermitian form on $\CC^n$ for $\nu\in\s_\CC$. The \textit{rank} of a vector bundle $E=(L,V,q)$ is defined as $\rk(E)=\rk_RV=\rk_{\OO_K}L$. For example, if $K=\QQ(i)$, we have the \textit{trivial bundle} $(\ZZ[i],\CC,z\overline{z})$, and if $K=\QQ(\sqrt{2})$, we have a vector bundle $(\{(a+b\sqrt{2},a-b\sqrt{2}):a,b\in\ZZ\},\RR\times\RR,x^2\times3x^2)$. Both of these examples have rank 1.

A \textit{morphism} of vector bundles $f:E=(L,V,q)\to E'=(L',V',q')$ is an $R$-linear map $f:V\rightarrow V'$ such that $f(L)\subset L'$ and $q'_\nu(f(v))\le q_\nu(v)$ for all $v\in V$ and $\nu\in\s$. We say that $f:E \to E'$ is an \textit{isomorphism} if  $f:V \to V'$ is an $R$-isomorphism, $f(L)=L'$, and $q'(f(v))=q(v)$ for all $v\in V$. As a result, we get a category $\catBun$ where the objects are vector bundles and the morphisms are as previously defined.

\subsection{Isomorphism classes of rank $n$ vector bundles}

\blank Let us consider the class of isomorphism classes of rank $n$ vector bundles, which we denote by $\Bun_n$.
To be able to define functions in $\Bun_n$, we show that we can identify $\Bun_n$ in the following two ways:
\[
\Bun_n\simeq\GL_n(\OO_K)\backslash \GL_n(R)/\K_n\simeq\GL_n(K)\backslash \GL_n(\A_K)/\widehat{\K_n}.
\]

Here, $\K_n=\prod_{\nu\in\s}O_\nu$, where $O_\nu$ is the orthogonal group $\mathrm{O}_n$ for $\nu\in\s_\RR$ and the unitary group $\mathrm{U}_n$ for $\nu\in\s_\CC$, and $\widehat{\K_n}=\prod_{\nu\in\s}O_\nu\times\prod_{\p \in \Spec(\OO_K)}\GL_n(\OO_{K, \p})$, where $\OO_{K, \p}$ denotes the set of $\pp$-adic integers. The inclusion $\GL_n(\OO_K)\subset\GL_n(R)$ is induced from the embedding $\OO_K\subset R$ given by $a\mapsto(\nu(a))_{\nu\in\s}$, and the other inclusions come from the natural embeddings as well.

Before proving this statement, let us first exhibit some lemmas.

\begin{lemma} \label{lem: union}
    For $(p) = \pp \in \Spec(\OO_K)$, we have
    \[
    \GL_n(K_\p) = \GL_n(\OO_K) A_n(p^\ZZ) \GL_n(\OO_{K, \p}).
    \]
\end{lemma}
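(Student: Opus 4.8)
The inclusion $\GL_n(\OO_K)\,A_n(p^\ZZ)\,\GL_n(\OO_{K,\p})\subseteq\GL_n(K_\p)$ is immediate, since each factor sits inside $\GL_n(K_\p)$ via $\OO_K\subset K\subset K_\p$ and $\OO_{K,\p}\subset K_\p$; the content is the reverse inclusion. The plan is to translate it into a statement about lattices. The quotient $\GL_n(K_\p)/\GL_n(\OO_{K,\p})$ is in bijection with the set of $\OO_{K,\p}$-lattices in $K_\p^n$ via $g\mapsto g\,\OO_{K,\p}^n$, and $\GL_n(\OO_K)\,A_n(p^\ZZ)\,\GL_n(\OO_{K,\p})$ is a union of right $\GL_n(\OO_{K,\p})$-cosets; so it is enough to show that every $\OO_{K,\p}$-lattice $L_0\subset K_\p^n$ admits the form $L_0=\gamma\,a\,\OO_{K,\p}^n$ with $\gamma\in\GL_n(\OO_K)$ and $a\in A_n(p^\ZZ)$. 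Since a central scalar $p^m$ lies in $A_n(p^\ZZ)$ and commutes with everything, I may first replace $L_0$ by $p^mL_0$ and thereby assume $L_0\subseteq\OO_{K,\p}^n$.

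The next step is to globalize. Put $L:=\OO_K^n\cap L_0$, the intersection taken inside $\OO_{K,\p}^n$ via the standard embedding $\OO_K^n\subset\OO_{K,\p}^n$. Because $L_0$ has finite colength it contains $p^N\OO_{K,\p}^n$ for some $N$, whence $L\supseteq p^N\OO_K^n$ and $L$ is a free $\OO_K$-module of rank $n$. The natural map $\OO_K^n/L\to\OO_{K,\p}^n/L_0$ is injective by construction and surjective because $\OO_K^n$ is dense in $\OO_{K,\p}^n$ while $L_0$ is open; so it identifies $\OO_K^n/L$ with the finite $\OO_K$-module $\OO_{K,\p}^n/L_0$, which is supported at $\p=(p)$.

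Here the class number $1$ hypothesis enters, and it does so twice. Since $\OO_K$ is a PID, the prime $\p=(p)$ is principal and the elementary divisor (Smith normal form) theorem applies to $L\subseteq\OO_K^n$: as $\OO_K^n/L$ is supported at $\p$, there is a basis $f_1,\dots,f_n$ of $\OO_K^n$ and integers $b_1,\dots,b_n\ge 0$ with $L=\bigoplus_i\OO_K\,p^{b_i}f_i$. Letting $\gamma\in\GL_n(\OO_K)$ be the matrix with columns $f_1,\dots,f_n$ and $a=\diag(p^{b_1},\dots,p^{b_n})\in A_n(p^\ZZ)$, this reads $L=\gamma\,a\,\OO_K^n$. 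It then remains to check $L_0=\gamma\,a\,\OO_{K,\p}^n$: by flatness, $L\otimes_{\OO_K}\OO_{K,\p}$ embeds into $\OO_{K,\p}^n$ with image $\gamma\,a\,\OO_{K,\p}^n\subseteq L_0$, and a length count --- both $\OO_{K,\p}^n/\gamma\,a\,\OO_{K,\p}^n$ and $\OO_{K,\p}^n/L_0\cong\OO_K^n/L$ have length $\sum_i b_i$ over $\OO_{K,\p}$ --- forces equality. Undoing the auxiliary scaling (dividing $a$ by $p^m$, still in $A_n(p^\ZZ)$) yields the claim.

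I expect the only genuinely delicate point to be this last identification $L_0=L\otimes_{\OO_K}\OO_{K,\p}$, i.e. the statement that completing the globalized lattice recovers the original local lattice. It relies on the flatness of $\OO_{K,\p}$ over $\OO_K$ together with the isomorphism $\OO_K/\p^b\cong\OO_{K,\p}/\p^b\OO_{K,\p}$ (equivalently, on the length comparison above). Everything else is the dictionary between cosets and lattices and the structure theory of finitely generated modules over the PID $\OO_K$; in particular this argument simultaneously reproves the local Cartan decomposition $\GL_n(K_\p)=\GL_n(\OO_{K,\p})\,A_n(p^\ZZ)\,\GL_n(\OO_{K,\p})$.
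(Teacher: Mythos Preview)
Your argument is correct and genuinely different from the paper's. The paper proves the lemma by an explicit Gaussian-elimination algorithm: using row and column operations from $\GL_n(\OO_K)$ and $\GL_n(\OO_{K,\p})$ it brings an arbitrary $g\in\GL_n(K_\p)$ step by step to a diagonal matrix $\diag(p^{a_1},\dots,p^{a_n})$, tracking $v_\p$-minimal entries along the way. You instead pass through the lattice dictionary $\GL_n(K_\p)/\GL_n(\OO_{K,\p})\leftrightarrow\{\OO_{K,\p}\text{-lattices in }K_\p^n\}$, globalize the local lattice to $L=\OO_K^n\cap L_0$, and then invoke Smith normal form over the PID $\OO_K$; the density argument $\OO_K^n+L_0=\OO_{K,\p}^n$ and the length comparison closing the loop are both fine. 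Your route is more structural and makes transparent exactly where the class-number-$1$ hypothesis is used (principality of $\p$ and the elementary-divisor theorem), and it gives the Cartan decomposition for free; the paper's route is more elementary and self-contained, requiring no outside input beyond the discrete valuation, but the role of the PID hypothesis is less visible in the matrix manipulations.
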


\begin{proof}
    Pick any $g \in \GL_n(K_\p)$.
    We split into several steps.

    \begin{enumerate}
        \item After left and right multiply by matrices in $\GL_n(\ZZ)$ we can assume $g_{11}$ has the smallest $v_\p$ value.
        \item After right multiply by a matrix in $\GL_n(\OO_{K, \p})$ we can assume $g_{1j} = 0$ for $j > 1$.
    Note that $g_{11}$ still has the smallest $v_\p$ value.
        \item Repeat Step 1 and 2 to the $(n - 1) \times (n - 1)$ submatrix (deleting the first row and column) we can assume $g_{2j} = 0$ for $j > 2$; note that $g_{11}$ still has the smallest $v_\p$ value, while $g_{22}$ has the smallest $v_\p$ value in the submatrix.
        \item Repeat Step 3 to submatrices we can assume $g$ is lower-triangular, with $g_{ii}$ having the smallest $v_\p$ value in the submatrix $(g_{\ell j})_{\ell, j \ge i}$.
        \item After right multiply by a matrix in $\GL_n(\OO_{K, \p})$ we can assume $g_{ii} = p^{a_i}$ for some $a_1 < a_2 < \cdots < a_n$.
        \item After left multiply by a matrix in $\GL_n(\OO_K)$ we can assume $v_\p(g_{i1}) \ge a_n$.
        \item Repeat Step 6 to the submatrices we can assume $v_\p(g_{ij}) \ge a_n$ for $i > j$.
        \item  Finally, after right multiply a matrix in $\GL_n(\OO_{K, \p})$ we can assume $g = \diag\pr{p^{a_1}, \ldots, p^{a_n}}$, which finishes the proof.
    \end{enumerate}
\end{proof}

\begin{cor} \label{cor: finite adeles}
    We have
    \[
    \GL_n\pr{\A_K^f} = \GL_n(K) \GL_n\pr{\widehat{\OO}_K},
    \]
    where $\A_K^f$ is the ring of finite adeles in $K$ and $\widehat{\OO}_K$ is the profinite completion of $\OO_K$, so that $\GL_n(\widehat{\OO}_K) = \prod_{\p \in \Spec(\OO_K)} \GL_n(\OO_{K, \pp})$.
\end{cor}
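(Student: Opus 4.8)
The plan is to establish the nontrivial inclusion $\GL_n(\A_K^f) \subseteq \GL_n(K)\,\GL_n(\widehat{\OO}_K)$ by induction on the size of the finite set of ``bad'' primes of a finite adele, applying Lemma \ref{lem: union} one prime at a time; the reverse inclusion is immediate, since both $\GL_n(K)$ and $\GL_n(\widehat{\OO}_K)$ are subgroups of $\GL_n(\A_K^f)$. So I fix $g = (g_\p)_\p \in \GL_n(\A_K^f)$ and set $S = \{\p : g_\p \notin \GL_n(\OO_{K,\p})\}$; by definition of the restricted product $S$ is finite, and I induct on $\#S$. If $S$ is empty then $g \in \prod_\p \GL_n(\OO_{K,\p}) = \GL_n(\widehat{\OO}_K)$ and there is nothing to prove.

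For the inductive step I choose $\p \in S$ together with a generator $p \in \OO_K$ of $\p$ (which exists because $\OO_K$ is a PID), and apply Lemma \ref{lem: union} to the entry $g_\p \in \GL_n(K_\p)$ to write $g_\p = \gamma_\p\, a_\p\, k_\p$ with $\gamma_\p \in \GL_n(\OO_K)$, $a_\p \in A_n(p^\ZZ)$, and $k_\p \in \GL_n(\OO_{K,\p})$; put $h_\p := \gamma_\p a_\p \in \GL_n(K)$. The crucial point is that $h_\p$ is actually integral away from $\p$, i.e.\ $h_\p \in \GL_n(\OO_{K,\mathfrak q})$ for every prime $\mathfrak q \ne \p$: indeed $\gamma_\p \in \GL_n(\OO_K) \subseteq \GL_n(\OO_{K,\mathfrak q})$, and since $p\OO_K = \p$ the element $p$ is a unit in $\OO_{K,\mathfrak q}$ whenever $\mathfrak q \ne \p$, so the diagonal entries of $a_\p$, being integer powers of $p$, are $\mathfrak q$-adic units and hence $a_\p \in \GL_n(\OO_{K,\mathfrak q})$. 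Now consider $g' := h_\p^{-1} g \in \GL_n(\A_K^f)$. At $\p$ we have $g'_\p = k_\p \in \GL_n(\OO_{K,\p})$, and at any prime $\mathfrak q \notin S$ with $\mathfrak q \ne \p$ we have $g'_{\mathfrak q} = h_\p^{-1} g_{\mathfrak q} \in \GL_n(\OO_{K,\mathfrak q})$ because $\GL_n(\OO_{K,\mathfrak q})$ is a group containing both $h_\p^{-1}$ and $g_{\mathfrak q}$; hence the bad set of $g'$ is contained in $S \setminus \{\p\}$, which has strictly fewer elements. By the inductive hypothesis $g' = \gamma' k'$ with $\gamma' \in \GL_n(K)$ and $k' \in \GL_n(\widehat{\OO}_K)$, so $g = h_\p g' = (h_\p \gamma')\, k' \in \GL_n(K)\,\GL_n(\widehat{\OO}_K)$, which closes the induction.

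The step I would be most careful about is precisely this ``integral away from $\p$'' property of $h_\p$. It is what makes the one-prime-at-a-time argument work: one cannot simply multiply together the local corrections $h_{\p_i}$ for all $\p_i \in S$ simultaneously, because these matrices do not commute and a correction introduced at one prime would in general destroy the integrality already arranged at the others; isolating a single $h_\p$ that is invertible-integral at every prime except $\p$ avoids this. Conceptually the corollary is the assertion that $\GL_n(K)$ acts transitively on the set of full-rank $\OO_K$-lattices in $K^n$ — equivalently, that every such lattice is free — which is exactly where the class-number-one hypothesis is used, here already encoded inside Lemma \ref{lem: union}.
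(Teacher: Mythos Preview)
Your proof is correct and follows essentially the same approach as the paper: both arguments correct one bad prime at a time via Lemma~\ref{lem: union}, using that the correcting matrix lies in $\GL_n(\OO_K[1/p])$ and hence is integral at every other prime. The only difference is cosmetic---you phrase the iteration as a formal induction on $\#S$, while the paper writes out the successive corrections $u_1, u_2, \ldots, u_m$ explicitly and sets $u = u_m\cdots u_1$.
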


\begin{proof}
    Pick any $g = (g_\p)_{\p \in \Spec(\OO_K)} \in \GL_n(\A_K^f)$, where $g_\p \in \GL_n(\OO_{K, \p})$ for all $\p \in \Spec(\OO_K)- \br{\p_1, \ldots, \p_m}$ being a finite set.
    Let $(p_i) = \p_i$.
    We then do the following procedure.

    \begin{enumerate}
        \item From Lemma~\ref{lem: union} we can find $u_1 \in \GL_n(\OO_K[1/p_1])$ such that $u_1 g_{\p_1} \in \GL_n(\OO_{K, \p_1})$.
        \item  From Lemma~\ref{lem: union} we can find $u_2 \in \GL_n(\OO_K[1/p_2])$ such that $u_2 u_1 g_{\p_2} \in \GL_n(\OO_{K, \p_2})$.
        \item[\vdots]
        \item[$m.$] From Lemma~\ref{lem: union} we can find $u_m \in \GL_n(\OO_K[1/p_m])$ such that $u_m u_{m - 1} \cdots u_1 g_{\p_m} \in \GL_n(\OO_{K, \p_m})$.
        
    \end{enumerate}

     Now put $u = u_m u_{m - 1} \cdots u_1 \in \GL_n(K)$, then we have $u g_{\p_i} \in \GL_n(\OO_{K, \p_i})$ for $1 \le i \le m$ because $\OO_K[1/p_i] \sub \OO_{K, \p_j}$ for $i \ne j$.
    For the same reason we have $u g_\p \in \GL_n(\OO_{K, \p})$ for all $p \in \Spec(\OO_K)- \br{\p_1, \ldots, \p_m}$.
    Thus, we have $g = u\inv \pr{u g_\p}_{\p \in \Spec(\OO_K)} \in \GL_n(K) \GL_n(\widehat{\OO}_K)$, which finishes the proof.
\end{proof}

Now we are at a position to prove the two identifications of $\Bun_n$.

\begin{prop}\label{prop: Bun and double cosets}
We have bijections
\[
\Bun_n=\GL_n(\OO_K)\backslash \GL_n(R)/\K_n=GL_n(K)\backslash GL_n(\A_K)/\widehat{\K_n}.
\]
\end{prop}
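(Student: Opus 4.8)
The plan is to establish the two bijections separately, using the adelic description as the bridge. First I would construct a map from $\Bun_n$ to $\GL_n(\OO_K)\backslash\GL_n(R)/\K_n$: given a vector bundle $E=(L,V,q)$, choose an $\OO_K$-basis of $L$ (which exists since $\OO_K$ is a PID and $L$ is free of rank $n$) and an orthonormal $R$-basis for $q$ (i.e.\ orthonormal for each $q_\nu$); the change-of-basis matrix lies in $\GL_n(R)$, and the ambiguity in the two choices is precisely left multiplication by $\GL_n(\OO_K)$ and right multiplication by $\K_n$. I would then check this is well-defined on isomorphism classes and construct the inverse: from a matrix $g\in\GL_n(R)$, set $V=R^n$, $L=g\cdot\OO_K^n$ (the $\OO_K$-span of the columns), and let $q_\nu$ be the form making the columns of $g$ orthonormal; isomorphic triples give the same double coset. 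The key point is that the PID hypothesis makes every $L$ free, so the moduli problem is exactly ``$\GL_n$-torsor'' and the double coset description follows formally.

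Next I would identify $\GL_n(\OO_K)\backslash\GL_n(R)/\K_n$ with $\GL_n(K)\backslash\GL_n(\A_K)/\widehat{\K_n}$. Writing $\GL_n(\A_K)=\GL_n(R)\times\GL_n(\A_K^f)$ (the Archimedean and finite parts) and $\widehat{\K_n}=\K_n\times\GL_n(\widehat{\OO}_K)$, Corollary~\ref{cor: finite adeles} gives $\GL_n(\A_K^f)=\GL_n(K)\GL_n(\widehat{\OO}_K)$, so every adelic class has a representative whose finite component is $1$. I would then compute the stabilizer: if $g_\infty\in\GL_n(R)$ and $g_\infty'\in\GL_n(R)$ represent the same $\GL_n(K)$-double coset with trivial finite part, then $\gamma g_\infty = g_\infty' k_\infty$ and $\gamma\cdot 1 = 1\cdot k_f$ for some $\gamma\in\GL_n(K)$, $k_\infty\in\K_n$, $k_f\in\GL_n(\widehat{\OO}_K)$; the second equation forces $\gamma\in\GL_n(K)\cap\GL_n(\widehat{\OO}_K)=\GL_n(\OO_K)$ (this last intersection being a standard fact, again using that $\OO_K$ is the global integers). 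Hence the two double coset spaces agree.

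I expect the main obstacle to be bookkeeping rather than anything deep: carefully verifying that the orbifold/set-theoretic bijections respect all three structures (the lattice, the quadratic/Hermitian form, and isomorphisms of bundles) simultaneously, and in particular checking that ``orthonormal basis for $q=\prod_\nu q_\nu$'' interacts correctly with the factorization $R=\prod_\nu R_\nu$ and that the orthogonal group at a real place versus the unitary group at a complex place is exactly the stabilizer of the standard form. The identification $\GL_n(K)\cap\GL_n(\widehat{\OO}_K)=\GL_n(\OO_K)$ inside $\GL_n(\A_K^f)$ should be cited or proved via $\OO_K=\bigcap_\p(\OO_{K,\p}\cap K)$. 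Everything else is a formal consequence of Corollary~\ref{cor: finite adeles} and the freeness of $L$, so I would present the proof as: (i) $\Bun_n\leftrightarrow$ Archimedean double cosets via choice of bases, (ii) strong approximation / Corollary~\ref{cor: finite adeles} to insert the finite places, with the stabilizer computation closing the loop.
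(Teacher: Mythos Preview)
Your approach is essentially the same as the paper's: construct the first bijection via choices of bases (the paper packages this as explicit maps $\alpha,\beta$, with $\alpha(g)=(\OO_K^n,R^n,(g^{-1})_*q_{\mathrm{st}})$ and $\beta(E)=BA^{-1}$ where $B(L)=\OO_K^n$ and $q=(AB^{-1})_*q_{\mathrm{st}}$), and then use Corollary~\ref{cor: finite adeles} for surjectivity together with the intersection $\GL_n(K)\cap\bigcap_\p\GL_n(\OO_{K,\p})=\GL_n(\OO_K)$ for injectivity of the adelic comparison. The paper argues both steps exactly along these lines.

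One concrete slip to fix: your proposed inverse map sends every $g$ to the trivial bundle. If you set $L=g\cdot\OO_K^n$ \emph{and} declare the columns of $g$ orthonormal for $q$, then $g$ itself is an isometry $(\OO_K^n,R^n,q_{\mathrm{st}})\to(L,R^n,q)$. You must vary only one of the two pieces of data: either keep $L=\OO_K^n$ and let $q$ be the form for which the columns of $g$ are orthonormal (this is the paper's $\alpha$), or keep $q=q_{\mathrm{st}}$ and set $L=g\cdot\OO_K^n$. With that correction your argument goes through and matches the paper's.
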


\begin{proof}First, let us show that $\Bun_n=\GL_n(\OO_K)\backslash \GL_n(R)/\K_n$. Let $(M_*q)(x)=q(Mx)$ for any matrix $M$ and any quadratic form $q$. If $q_{\text{st}}(x)=x^Tx$, $q(x)=x^TA^TAx=A_*q_{\text{st}}$ (as any symmetric matrix may be written in the form $A^TA$), and $B\in GL_n(R)$ is a matrix such that $B(L)=\OO_K$.  Consider the maps $\alpha:\GL_n(\OO_K)\backslash \GL_n(R)/\K_n\to\Bun_n$ and $\beta:\Bun_n\to\GL_n(\OO_K)\backslash \GL_n(R)/\K_n$ defined by $$\alpha:g\mapsto (\OO_K^n,R^n,(g^{-1})_*(q_{\text{st}}))$$and $$\beta:(L,R^n,q)\overset{B}{\simeq}(\OO_K^n,R^n,(AB^{-1})_*(q_{\text{st}}))\mapsto BA^{-1}$$To check that $\alpha$ is well-defined, if we choose the representative $hgk$ instead, note that $$(\OO_K^n,R^n,((hgk)^{-1})_*(q_{\text{st}}))\overset{ h^{-1}}{\simeq}(\OO_K^n,R^n,((gk)^{-1})_*(q_{\text{st}}))=(\OO_K^n,R^n,(g^{-1})_*(q_{\text{st}}))$$To see that $\beta$ is well-defined, if $(L,R^n,q)\overset{f}{\simeq}(L',R^n,q')$, we have that $B'f=B$ and $f_*q'=q\Longrightarrow A'f=A$ (both equalities in the double coset), so it follows that $BA^{-1}=B'A'^{-1}$ in the double coset. 

Finally, to check these are inverses, note that $$(\alpha\circ\beta)(L,R^n,q)=\alpha(BA^{-1})=(\OO_K^n,R^n,(AB^{-1})_*(q_{\text{st}}))\simeq (L,R^n,q)$$and $$(\beta\circ\alpha)(g)=\beta(\OO_k^n,R^n,(q^{-1})_*(q_{\text{st}}))=g$$as desired. Now, let us show that $\GL_n(\OO_K)\backslash \GL_n(R)/\K_n=GL_n(K)\backslash GL_n(\A_K)/\widehat{\K_n}$.

Now we will show that the natural map
\[
\GL_n(\OO_K)\backslash \GL_n(R)/\K_n \to GL_n(K)\backslash GL_n(\A_K)/\widehat{\K_n}
\]
induced from the inclusion $\GL_n(R) \hookrightarrow GL_n(\A_K)$ is a bijection.
It is surjective from Corollary~\ref{cor: finite adeles}.
Now we show the injectivity.

Let us say that $\ob{\pr{g_\nu}_{\nu \in \s}}$ has the same image as $\ob{\pr{g_\nu'}_{\nu \in \s}}$.
Then there exists $u \in \GL_n(K)$ and $\pr{(h_\p)_{\p \in \Spec(\OO_K)}, (h_\nu)_{\nu \in \s}} \in K_n = \prod_{\p \in \Spec(\OO_K)} \GL_n(\OO_{K, \p}) \times \prod_{\nu \in \s} O_\nu$ such that $\pr{(\id)_{\p \in \PP}, \pr{g_\nu'}_{\nu \in \s}} = u\inv \pr{(\id)_{\p \in \Spec(\OO_K)}, \pr{g_\nu}_{\nu \in \s}} \pr{(h_\p)_{\p \in \Spec(\OO_K)}, (h_\nu)_{\nu \in \s}}$, that is, $h_\p = u$ and $g_\nu' = u\inv g_\nu h_\nu$.

However, then $u \in \GL_n(K) \cap \bigcap_{\p \in \Spec(\OO_K)} \GL_n(\OO_{K, \p}) = \GL_n(\OO_K)$ since $\OO_K = K \cap \bigcap_{\p \in \Spec(\OO_K)} \OO_{K, \p}$ and $\OO_K^\times = K^\times \cap \bigcap_{\p \in \Spec(\OO_K)} \OO_{K, \p}^\times$, and so $\ob{\pr{g_\nu}_{\nu \in \s}} = \ob{\pr{g_\nu'}_{\nu \in \s}}$, which concludes the proof.
\end{proof}

\noindent From now on, when we say a vector bundle $E\in\Bun_n$, we will actually mean the isomorphism class of $E$.

Now, let us compute $\Bun_1$ explicitly. As the orthogonal and unitary groups are just $\{\pm1\}$ and $S_1$ for $n=1$, respectively, we have that
\begin{align*}
\Bun_1&=\GL_1(\OO_K)\backslash \GL_1(R)/\prod_{\nu\in\s}(O_\nu)_1\\
    &=\OO_K^\times\backslash R^\times/\prod_{\nu\in\s_\RR} \{\pm1\}\times\prod_{\nu'\in\s_\CC}  S_1\\
    &=\OO_K^\times\backslash \prod_{\nu\in\s_\RR} \RR^\times\times\prod_{\nu'\in\s_\CC}\CC^\times/\prod_{\nu\in\s_\RR} \{\pm1\}\times\prod_{\nu'\in\s_\CC}  S_1\\
    &=\OO_K^\times\backslash \prod_{\nu\in\s_\RR} \RR^\times/\{\pm1\}\times\prod_{\nu'\in\s_\CC}\CC^\times/S_1\\
    &=\OO_K^\times\backslash \prod_{\nu\in\s}\RR_+^\times.
\end{align*}

Here, $\OO_K^\times$ embeds into $\prod_{\nu\in\s}\RR_+^\times$ by $a\mapsto(|\nu(a)|_\nu)_{\nu\in\s}$.  Furthermore, by \cite{milne}, $\OO_K^\times$ is a $\ZZ$-lattice of rank $r_1+r_2-1$, so if we denote this lattice by $\Lambda$, we have that
\[
\Bun_1=\OO_K^\times\backslash \prod_{\nu\in\s}\RR_+^\times = (\RR^{r_1 + r_2 - 1} / \Lambda) \times \RR_+^\times \simeq (\RR / \ZZ)^{r_1 + r_2 - 1} \times \RR_+^\times
\]
where we used the isomorphism $\log:\RR_+^\times \overset{\sim}{\longrightarrow} \RR$.
However, the isomorphism $\RR^{r_1 + r_2 - 1} / \Lambda \simeq (\RR / \ZZ)^{r_1 + r_2 - 1}$ is not canonical, so we prefer to use $\RR^{r_1 + r_2 - 1} / \Lambda$ instead of $(\RR / \ZZ)^{r_1 + r_2 - 1}$.
This identification of $\Bun_1$ will allow us to define functions  on it, as well as a Fourier transform for these functions.
Denote $\B = \Bun_1 = \OO_K^\times \backslash \prod_{\nu \in \s} \RR_+^\times = (\RR^{r_1 + r_2 - 1} / \Lambda) \times \RR_+^\times$ for the rest of the paper.
We can write an element $a \in \B$ as $\ob{(a_i)}^{r_1+r_2}_{i=1}$, where $a_i\in\RR_+^\times$ and we take the equivalence class after taking the quotient by $\OO_K$. We define a norm $|\cdot|_\B:\B \to \RR_+$ by 
\[
|a|_\B = \prod_{i = 1}^{r_1} a_i \prod_{j = r_1 + 1}^{r_1 + r_2} a_j^2.
\]
The complex norms are squared because we have two embeddings $\nu$ and $\overline{\nu}$ for each complex place $\nu$. Note that this norm is well-defined, as for $b \in \OO_K^\times$, we have that $|b|_\B = \abs{\pr{\abs{\nu(b)}}_{\nu \in \s}}_\B = \prod_{\nu\in\s_\RR}|\nu(b)|_\nu\prod_{\nu\in\s_\CC}|\nu(b)|^2_\nu=1$. 

Let $\beta$ be the map $\beta:\Bun_n \to \GL_n(\OO_K)\doublecoset{\GL_n(R)}\K_n$. We define the \textit{degree of} $E\in\Bun_n$ as $\deg(E)=|\beta(E)|_\B$. Note that we can take the norm of $\beta(E)\in R^\times$ because we have a natural map $$R^\times\to \OO_K^\times\backslash R^\times/\prod_{\nu\in\s_\RR} \{\pm1\}\times\prod_{\nu'\in\s_\CC}  S_1=\B.$$ We can check that $\deg$ is well-defined as $\det(h),\det(k)\in\OO_K$ for $h\in\GL_n(\OO_K)$ and $k\in \K_n$. 




\subsection{Subbundles}

We say that 

$$0 \longrightarrow E' \overset{i}{\longrightarrow} E \overset{j}{\longrightarrow} E'' \longrightarrow 0$$is a \textit{short exact sequence in }$\catBun$ if we have the following:

\begin{enumerate}

    \item The induced sequences of modules are short exact.
    
    \item The form $q'$ is equal to $i^*(q)$, where $(i^*q)(v')=q(i(v'))$, $v'\in V'$.

    \item The form $q''$ is equal to $j_*(q)$, where $(j_*q)(v'')=\min_{j(v)=v''}q(v)$, $v''\in V''$.
\end{enumerate}We say that $i$ is an \textit{admissible monomorphism}. We define a \textit{subbundle in }$E$ as the equivalence class of admissible monomorphisms $E'\to E$ modulo isomorphisms of the source. For each subbundle $E'$, we then have a quotient bundle $E/E'=E''$.

We have the following three propositions.

\begin{prop}\label{prop: subbundle lore}
Let $E=(L,V,q)$ be a vector bundle on $\overline{\Spec(\OO_K)}$. We have a bijection between each of the following sets:

\begin{enumerate}
    \item Rank $r$ subbundles $E'\subset E$.
    \item Rank $r$ primitive sublattices, that is, submodules $L'\subset L$ such that $L/L'$ has no torsion.
    \item $K$-linear subspaces $W'\subset L\otimes_{\OO_K} K$ of dimension $r$.
\end{enumerate}
\end{prop}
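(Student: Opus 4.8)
The plan is to split the argument into two parts: the equivalence of (2) and (3), which is commutative algebra over the principal ideal domain $\OO_K$ and does not involve the metrics; and the equivalence of (1) and (2), which in addition requires that the quadratic and Hermitian forms behave well under restriction to a subspace and under projection to a quotient.

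\emph{From (2) to (3) and back.} Since $L$ is free over the domain $\OO_K$, the natural map $L \to V_K := L \otimes_{\OO_K} K$ is injective, and I view $L$ as a full-rank lattice inside the $n$-dimensional $K$-vector space $V_K$. To a rank $r$ primitive sublattice $L' \subset L$ I associate $W' := L' \otimes_{\OO_K} K$, which has dimension $r$ and embeds into $V_K$ because $-\otimes_{\OO_K} K$ is exact; conversely, to a subspace $W' \subset V_K$ I associate $L' := W' \cap L$. I would check these are mutually inverse. Clearing denominators, every $w \in W'$ can be written $w = x/a$ with $x \in L$ and $a \in \OO_K \setminus \{0\}$, so $x = aw \in W' \cap L$; hence $W' \cap L$ spans $W'$ over $K$, which gives $(W' \cap L) \otimes_{\OO_K} K = W'$ and in particular $\rk(W' \cap L) = \dim_K W' = r$. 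Moreover $W' \cap L$ is primitive in $L$: if $a\bar{x} = 0$ in $L/(W' \cap L)$ with $a \neq 0$, then $x \in W'$, so $x \in W' \cap L$. In the other direction, for primitive $L'$ the inclusion $L' \subset (L' \otimes_{\OO_K} K) \cap L$ is immediate, and the reverse inclusion is exactly the torsion-freeness of $L/L'$ (if $x \in L$ with $ax \in L'$, $a \neq 0$, then $x \in L'$).

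\emph{From (1) to (2) and back.} Given an admissible monomorphism $i \colon E' = (L', V', q') \to E$, condition (1) in the definition of a short exact sequence makes $0 \to i(L') \to L \to L/i(L') \to 0$ exact with $L/i(L')$ the lattice of the bundle $E/E'$, hence free and so torsion-free; thus $i(L') \subset L$ is a rank $r$ primitive sublattice, and it depends only on the equivalence class of $i$ since an isomorphism of the source carries $L'$ onto itself. Conversely, given a rank $r$ primitive sublattice $L' \subset L$, I would build a subbundle as follows. Because $\OO_K$ is a principal ideal domain and $L/L'$ is finitely generated and torsion-free, it is free and the sequence $0 \to L' \to L \to L/L' \to 0$ splits; tensoring with $R$ over $\OO_K$ keeps it exact, giving $V' := L' \otimes_{\OO_K} R \hookrightarrow V$ with cokernel $V''$. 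Set $q' := i^*(q)$ and $q'' := j_*(q)$. I then need to verify that $(L', V', q')$ and $(L'', V'', q'')$ are vector bundles fitting into a short exact sequence in $\catBun$: $q'$ is positive-definite as a restriction of $q$, and, identifying $V''$ with the $q$-orthogonal complement $(V')^{\perp}$ at each place, $j_*(q)$ becomes the restriction of $q$ to $(V')^{\perp}$, so the fiberwise minimum is attained and $q''$ is again a positive-definite quadratic (resp. Hermitian) form; conditions (1)--(3) then hold by construction. Finally I would check the two assignments are inverse: from $L'$ we recover $i(L') = L'$; and from a subbundle $i \colon E' \to E$ the reconstruction yields $V' = L' \otimes_{\OO_K} R = i(V')$ — using that $L' \cong i(L')$ has maximal rank in $V'$ because $E'$ is a vector bundle — together with $q' = i^*(q)$ by condition (2), so it represents the same subbundle as $i$.

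\emph{Expected main obstacle.} The commutative algebra in the first part and the splitting/exactness in the second are routine over a principal ideal domain. The genuinely delicate point is confirming that the reconstructed data in the second part really forms a short exact sequence in $\catBun$, and specifically that $q'' = j_*(q)$, defined fiberwise by a minimum, is an honest positive-definite quadratic (resp. Hermitian) form rather than merely a function on $V''$; I expect to handle this by making the orthogonal-complement description explicit at each real and complex place $\nu$.
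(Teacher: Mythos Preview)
Your proposal is correct and follows essentially the same approach as the paper: map a subbundle $E'$ to its lattice $L'$ (primitive because $L/L'$ is the lattice of the quotient bundle, hence free), and relate primitive sublattices to $K$-subspaces via $L' \mapsto L' \otimes_{\OO_K} K$ and $W' \mapsto W' \cap L$. The paper's own proof is a two-sentence sketch that simply names these maps and asserts they are bijections; you have supplied the verifications the paper omits, including the point about $j_*(q)$ being a genuine form via the orthogonal complement at each place, which is indeed the only step requiring any care.
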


\begin{proof}
Given a rank $r$ subbundle $E'=(L',V',q')$, we can map it to a lattice $L'$. This lattice will be primitive because there must be some $E''$ such that $0\to L'\to L\to L''\to 0$ is exact, so $L/L'=L''$ would have to be free. This gives a bijection between $(1)$ and $(2)$. For a bijection between $(1)$ and $(3)$, we can map $E'$ to $L'\otimes_{\OO_K}K$. We can check that these maps are indeed bijections, as desired.
\end{proof}

\begin{prop}\label{prop: bounded degree}
Let $E=(L,V,q)$ be a vector bundle on $\overline{\Spec(\OO_K)}$. For any $r\in \ZZ_+$ and $a\in \RR_+$, the set of subbundles $E'\subset E$ with $\rk(E')=r$ and $\deg(E)\geqslant a$ is finite.
\end{prop}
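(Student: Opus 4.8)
The plan is to use the classification from Proposition~\ref{prop: subbundle lore}: a rank-$r$ subbundle $E' \subset E$ is determined by the primitive sublattice $L' \subset L$, or equivalently by the $K$-subspace $W' = L' \otimes_{\OO_K} K \subset L \otimes_{\OO_K} K$. The key point is that the induced form $q'$ on $E'$ is \emph{not} a free parameter: once $L'$ (hence $V' = L' \otimes_{\OO_K} R \subset V$) is fixed, the condition $q' = i^*(q)$ forces $q'$ to be the restriction of $q$. So the whole datum $E'$ is pinned down by $L'$, and I must show only finitely many primitive sublattices $L'$ of rank $r$ can arise from subbundles of degree $\ge a$. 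I would first express $\deg(E')$ in terms of lattice covolumes: choosing an $\OO_K$-basis of $L'$ and comparing with a basis of $L$, the degree of $E'$ is, up to the fixed factor $\deg(E)$-type normalization, controlled by the covolume of $L'$ inside $V'$ with respect to the restricted metric $q|_{V'}$. Concretely, if $L' = \beta(E')$ in the double-coset picture, then $\deg(E') = |\beta(E')|_\B$ and this is the covolume of the image lattice, which is the covolume of $L'$ in $(V', q|_{V'})$ divided by the appropriate quantity.

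The next step is a Minkowski/geometry-of-numbers argument. Fix the Euclidean/Hermitian structure on $V = R^n$ given by $q = \prod_\nu q_\nu$. For a primitive sublattice $L'$ of rank $r$, its covolume in the subspace $V' = L' \otimes R$ (with the induced metric) is bounded below by $\lambda_1(L') \cdots \lambda_r(L')$ up to constants (Minkowski's second theorem), where the $\lambda_i$ are successive minima; and each nonzero vector of $L'$ is a nonzero vector of $L$, so $\lambda_1(L') \ge \lambda_1(L) > 0$, a positive constant depending only on $E$. More usefully, I want an \emph{upper} bound on the covolume forcing $L'$ into a bounded region: a lower bound $\deg(E') \ge a$ should translate (via the relation $\deg(E') \deg(E/E')$ compares with $\deg(E)$, or directly) into an \emph{upper} bound on $\operatorname{covol}(L')$, and then Minkowski's first theorem gives a nonzero vector of $L'$ of length $\ll (\operatorname{covol} L')^{1/r}$, i.e. bounded. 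Iterating (pass to the quotient lattice $L/L'$ or peel off one short vector at a time), I get that $L'$ is spanned by vectors of $L$ of bounded $q$-length. Since $\{v \in L : q(v) \le C\}$ is finite for each $C$ (a lattice in a finite-dimensional Euclidean space meets any ball in finitely many points), there are only finitely many possibilities for a spanning set, hence finitely many $L'$, hence finitely many $E'$.

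The main obstacle is making the bookkeeping between $\deg(E')$, $\deg(E/E')$, and covolumes precise — in particular checking that the condition $j_*(q) = q''$ (quotient form is the minimum, i.e. the orthogonal-projection metric) gives the clean multiplicativity $\deg(E') \cdot \deg(E/E') = \deg(E)$ one expects from the classical Arakelov picture, so that $\deg(E') \ge a$ is equivalent to $\deg(E/E') \le \deg(E)/a$, controlling the covolume of one of the two pieces. Once that identity (or even a two-sided inequality between $\deg(E')$ and a power of $\operatorname{covol}(L')^{-1}$) is in hand, the finiteness is a standard consequence of the discreteness of lattices. I would also remark that the statement as displayed says $\deg(E) \ge a$, which I read as a typo for $\deg(E') \ge a$ (otherwise the condition is independent of $E'$), and prove that version.
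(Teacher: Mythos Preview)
Your outline is correct and is the standard reduction-theory argument. The paper itself supplies no argument here: its proof reads in full ``The proof is the same as in \cite{originalpaper}.'' What you sketch---identify $E'$ with its primitive sublattice $L'$ via Proposition~\ref{prop: subbundle lore}, translate $\deg(E')\ge a$ into an upper bound on the covolume of $L'$ in $(V',q|_{V'})$ (the multiplicativity $\deg(E')\deg(E/E')=\deg(E)$ does hold, since $j_*q$ is the orthogonal-projection metric), then combine Minkowski's second theorem with $\lambda_1(L')\ge\lambda_1(L)>0$ to bound all successive minima and trap a generating set for $L'$ inside a fixed finite subset of $L$---is exactly the argument one finds in the cited source, transported from $\ZZ$ to $\OO_K$ by regarding everything as a $\ZZ$-lattice of rank $[K:\QQ]$ times the $\OO_K$-rank. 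Your reading of ``$\deg(E)\ge a$'' as a typo for ``$\deg(E')\ge a$'' is also correct.
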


\begin{proof}
The proof is the same as in \cite{originalpaper}.
\end{proof}

\begin{prop}\label{prop: torus}
The set $\Ext^1(E'',E')$ has a natural structure isomorphic to  $(R/\OO_K)^{n'n''}$, where $n'=\rk(E')$ and $n''=\rk(E'')$.
\end{prop}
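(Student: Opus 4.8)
The plan is to compute $\Ext^1(E'',E')$ directly from the definition of short exact sequence in $\catBun$, reducing it to a classical computation of extensions of $\OO_K$-lattices together with a compatibility condition on the metrics that turns out to be automatically satisfiable. First I would reduce to the case where $E'=(L',V',q')$ and $E''=(L'',V'',q'')$ are fixed representatives with $V'=R^{n'}$, $V''=R^{n''}$. An element of $\Ext^1(E'',E')$ is an isomorphism class of short exact sequences $0\to E'\to E\to E''\to 0$; by condition (1) the underlying sequence of $R$-modules splits (everything is free over $R$), so $V\simeq V'\oplus V''$, and the sequence of $\OO_K$-lattices $0\to L'\to L\to L''\to 0$ determines a class in $\Ext^1_{\OO_K}(L'',L')$. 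Since $\OO_K$ has class number $1$, $L''$ is free, hence projective, so $\Ext^1_{\OO_K}(L'',L')=0$ as an abstract group — but this is \emph{not} what we want, because in $\catBun$ the relevant equivalence is only up to isomorphisms fixing $E'$ and $E''$ on the nose, which is a strictly coarser identification than allowing arbitrary automorphisms of $L''$. So the correct object to compute is the set of $\OO_K$-module splittings of the $R$-split sequence modulo the subgroup coming from $\Hom$, i.e. the Yoneda-style quotient $\Hom_{\OO_K}(L'',L'\otimes_{\OO_K}(R/\OO_K))\,/\,\mathrm{image}$; concretely, choosing an $R$-splitting, the lattice $L$ is recovered from an $R/\OO_K$-valued "gluing matrix," and two such give isomorphic subbundles iff they differ by a matrix with entries in $\OO_K$ (coming from changing the $\OO_K$-splitting). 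This identifies the set of extensions-of-lattices part with $\Hom_{\OO_K}(L'',L'\otimes_{\OO_K} R/\OO_K)$; fixing bases of $L'\simeq\OO_K^{n'}$ and $L''\simeq\OO_K^{n''}$ identifies this with $(R/\OO_K)^{n'n''}$ as an abelian group (note $R/\OO_K\simeq R^{r_1+r_2}/\OO_K$ is a real torus times a vector space, but the problem statement takes $R/\OO_K$ literally, so I will keep that notation).

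Next I would deal with the quadratic/Hermitian forms. Given the $R$-module $V=V'\oplus V''$ with the lattice $L$ specified by a gluing matrix, conditions (2) and (3) say $q'=i^*q$ and $q''=j_*q$. The key observation is that for \emph{any} choice of gluing data there exists a form $q$ on $V$ restricting to $q'$ on the sub and pushing forward to $q''$ on the quotient: indeed, using the $R$-splitting $V=V'\oplus V''$, one can take $q=q'\boxplus q''$ (the orthogonal direct sum with respect to this splitting), for which $i^*q=q'$ is immediate, and $j_*q=q''$ holds because the minimum of $q'(v')+q''(v'')$ over $v'$ with fixed image is attained at $v'=0$. One should check this $q$ is still positive-definite (it is, being an orthogonal sum of positive-definite forms place by place) and that, after we quotient by isomorphisms fixing $E'$ and $E''$, the choice of $q$ does not introduce extra identifications or extra parameters beyond the gluing matrix — i.e. two extensions with the same gluing class but a priori different $q$'s are isomorphic. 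This last point is where I'd be most careful: one must show the isomorphism $E\simeq E'\boxplus_{\text{glued}}E''$ can always be arranged to be the identity on $E'$ and on $E''$, so that the "metric direction" of $\Ext^1$ is trivial and the whole group is just the lattice-gluing part $(R/\OO_K)^{n'n''}$.

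Then I would verify that the abelian group structure on $\Ext^1(E'',E'')$ coming from Baer sums matches the additive structure on $(R/\OO_K)^{n'n''}$ under the above identification: Baer sum of two sequences corresponds to adding the gluing matrices, which is addition in $\Hom_{\OO_K}(L'',L'\otimes R/\OO_K)\simeq(R/\OO_K)^{n'n''}$; the zero element is the split sequence, i.e. the zero gluing matrix. Finally I would note the identification is natural in $E'$ and $E''$ in the evident sense (functoriality of $\Hom$ and of $\otimes R/\OO_K$), which is presumably what "natural structure" refers to.

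The main obstacle I anticipate is the second paragraph: cleanly showing that the quotient-form condition (3), with its $\min$, does not obstruct realizing a given gluing class and does not add hidden identifications — in particular checking that every short exact sequence in $\catBun$ with given $E',E''$ is isomorphic (by an isomorphism that is the identity on $E'$ and $E''$) to one of the standard "orthogonal-sum-with-gluing" models. Everything else (class number $1$ making $L''$ free, the Yoneda quotient identification, bases, Baer sum) is routine bookkeeping parallel to \cite{originalpaper}.
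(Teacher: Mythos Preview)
Your outline has the right ingredients but assembles them in a way that cannot work. The specific false step is the claim in your second paragraph that ``two extensions with the same gluing class but a priori different $q$'s are isomorphic,'' i.e.\ that the metric direction of $\Ext^1$ is trivial once the lattice $L$ is fixed. This is simply not true. Take $n'=n''=1$, $K=\QQ$, $E'=E''=(\ZZ,\RR,x^2)$, and $L=\ZZ^2\subset\RR^2$. The forms $q_0(x,y)=x^2+y^2$ and $q_{1/2}(x,y)=(x-y/2)^2+y^2$ both satisfy $i^*q=q'$ and $j_*q=q''$, but any isomorphism of extensions fixing $E',E''$ has the shape $(x,y)\mapsto(x+\chi y,y)$; preserving $L=\ZZ^2$ forces $\chi\in\ZZ$, while carrying $q_0$ to $q_{1/2}$ forces $\chi=-1/2$. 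So $(L,q_0)\not\simeq(L,q_{1/2})$. In general, for fixed $L$ the forms $q$ modulo isomorphisms preserving that $L$ already fill out a copy of $(R/\OO_K)^{n'n''}$, so the ``lattice part'' and the ``metric part'' cannot be separated the way you propose. (Relatedly, your remark that the $\catBun$-equivalence is ``coarser'' than the Yoneda one for $\OO_K$-lattices is backwards: requiring the isomorphism to be $R$-linear and isometric is a \emph{stronger} constraint, which is exactly why $\Ext^1$ becomes nontrivial even though $\Ext^1_{\OO_K}(L'',L')=0$.)

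The fix is to swap the roles of $L$ and $q$ in your normalization. Fix the $R$-splitting $V=V'\oplus V''$ and set $q=q'\boxplus q''$ once and for all. Then: (i) every extension is isomorphic to one with this $q$, because the isomorphism group $\Hom_R(V'',V')$ acts simply transitively on the set of admissible $q$'s (via moving the orthogonal complement of $V'$); and (ii) the only isomorphism of extensions that preserves $q'\boxplus q''$ is the identity. Hence $\Ext^1(E'',E')$ is in bijection with the set of admissible lattices $L\subset V$ for this fixed $q$, which is your $\Hom_{\OO_K}(L'',L'\otimes_{\OO_K}R/\OO_K)\simeq (R/\OO_K)^{n'n''}$. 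Equivalently, if you parametrize $L$ by $\psi\in\Hom_{\OO_K}(L'',V')$ and $q$ by $\varphi\in\Hom_R(V'',V')$, the isomorphism group acts by $(\psi,\varphi)\mapsto(\psi+\chi,\varphi+\chi)$, so the genuine invariant is the class $[\psi-\varphi]\in(R/\OO_K)^{n'n''}$, not $[\psi]$ alone. This is the argument that matches the paper's reference to \cite{originalpaper}; your Baer-sum and naturality checks then go through unchanged.
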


\begin{proof}
The proof is the same as in \cite{originalpaper}.
\end{proof}


\section{Hall algebra}
\label{sec: Hall algebra}

\blank To be able to define functions in $\Bun_n$, we show the following.

\begin{prop}
The set $\Bun_n$ can be endowed with an orbifold structure.
\end{prop}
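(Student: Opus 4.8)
<br>

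The plan is to exhibit $\Bun_n$ as a quotient of a smooth manifold by a group acting properly with finite stabilizers, which is the standard recipe for an orbifold structure. By Proposition~\ref{prop: Bun and double cosets} we have $\Bun_n = \GL_n(\OO_K)\backslash \GL_n(R)/\K_n$. The first step is to observe that $\GL_n(R)$ is a Lie group and $\K_n = \prod_{\nu\in\s} O_\nu$ is a compact subgroup, so the quotient $X_n := \GL_n(R)/\K_n$ is a smooth manifold (indeed a symmetric space; it is $\prod_{\nu\in\s_\RR}(\GL_n(\RR)/\mathrm{O}_n)\times\prod_{\nu\in\s_\CC}(\GL_n(\CC)/\mathrm{U}_n)$). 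Then $\Bun_n = \GL_n(\OO_K)\backslash X_n$, so it suffices to show that the left action of the discrete group $\Gamma := \GL_n(\OO_K)$ on $X_n$ is properly discontinuous with finite stabilizers.

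Next I would prove the discreteness and properness. The key point is that $\GL_n(\OO_K)$ is a discrete subgroup of $\GL_n(R)$: this follows from the discreteness of $\OO_K$ in $R$ under the embedding $a\mapsto(\nu(a))_{\nu\in\s}$, which is exactly the classical fact (used already in the proof of Proposition~\ref{prop: Bun and double cosets}) that $\OO_K$ is a lattice in $R$ under the Minkowski embedding. Since $\K_n$ is compact, a discrete subgroup of $\GL_n(R)$ acts properly discontinuously on $\GL_n(R)/\K_n$ by the general principle that if $\Gamma\le G$ is discrete and $K\le G$ is compact then $\Gamma$ acts properly discontinuously on $G/K$ (the stabilizer of $gK$ is $\Gamma\cap gKg^{-1}$, which is a discrete subset of a compact set, hence finite). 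This gives both the proper discontinuity and the finiteness of stabilizers.

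Finally I would assemble the orbifold charts. Around a point $x\in X_n$ with (finite) stabilizer $\Gamma_x$, proper discontinuity gives a $\Gamma_x$-invariant open neighborhood $U_x$ whose $\Gamma$-translates are either equal or disjoint; then $(U_x,\Gamma_x,U_x\to U_x/\Gamma_x)$ is an orbifold chart, and one checks the cocycle/compatibility conditions on overlaps in the usual way (passing to a common refinement and using that the $\Gamma$-action permutes the sheets). This yields a $C^\infty$-orbifold structure on $\Bun_n=\Gamma\backslash X_n$; note that for $n=1$ one recovers the manifold $\B = (\RR^{r_1+r_2-1}/\Lambda)\times\RR_+^\times$ computed above, since in that case $\Gamma = \OO_K^\times$ acts freely modulo the trivial stabilizer $\{\pm 1\}$ (real places) resp. $S^1$ (complex places), already quotiented out into $\K_1$.

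The main obstacle — really the only substantive point, since the rest is soft topology — is verifying the discreteness of $\GL_n(\OO_K)$ in $\GL_n(R)$ together with the properness of its action; both reduce to the Minkowski-embedding lattice property of $\OO_K$ and the compactness of $\K_n$, so the argument is essentially the same as in \cite{originalpaper} with $\ZZ\subset\RR$ replaced by $\OO_K\subset R$. One should also be slightly careful that $\GL_n(R)$ has several connected components and $X_n$ is a disjoint union of symmetric spaces, but this does not affect the orbifold conclusion.
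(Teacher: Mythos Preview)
Your proposal is correct and is exactly the standard argument; it is essentially the same approach as the paper, which simply writes ``The proof is very similar to that in \cite{originalpaper}'' and gives no further details. Your sketch (symmetric space $X_n=\GL_n(R)/\K_n$, discreteness of $\GL_n(\OO_K)\subset\GL_n(R)$ via the Minkowski embedding of $\OO_K$ in $R$, and properness/finite stabilizers from compactness of $\K_n$) is precisely what \cite{originalpaper} does for $\ZZ\subset\RR$ and what the paper intends here.
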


\begin{proof}
The proof is very similar to that in \cite{originalpaper}.
\end{proof}As a result, we can consider the set $H_n=C_c^\infty(\Bun_n)$ of compactly supported functions in $\Bun_n$. Let $H=\bigoplus_{n\geqslant0}H_n$ be the \textit{Hall algebra of }$\overline{\Spec(\OO_K)}$. However, it is still not clear that $H$ has an algebra structure. Our next step is to show this.

Given $f\in H_m$ and $g\in H_n$, we define the \textit{Hall product} $f*g\in H_{m+n}$ as

$$(f*g)(E)=\sum_{\substack{E'\subset E\\\rk(E')=m}}\deg(E')^{\frac{n}{2}}\deg(E/E')^{-\frac{m}{2}}\cdot f(E')g(E/E')$$

\bigskip

\begin{prop}\label{prop: Hall product}
$f*g$ is well-defined and makes $H$ into a graded associative algebra.    
\end{prop}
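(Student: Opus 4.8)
The plan is to verify three things in order: (1) the Hall product $f*g$ is a well-defined compactly supported smooth function on $\Bun_{m+n}$, (2) multiplication is associative, and (3) the unit $1\in H_0$ works.

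\textbf{Well-definedness.} First I would check that the sum defining $(f*g)(E)$ is finite for each $E$. Since $f$ and $g$ have compact support, there are constants $a,b>0$ with $f(E')=0$ unless $\deg(E')\ge a$ and $g(E/E')=0$ unless $\deg(E/E')\ge b$; combined with the multiplicativity of degree in a short exact sequence ($\deg(E)=\deg(E')\deg(E/E')$), this forces $\deg(E')$ into a bounded range, and then Proposition~\ref{prop: bounded degree} gives finiteness of the set of such subbundles $E'\subset E$ of rank $m$. Next I would check that $f*g$ is again compactly supported: the support of $f*g$ is contained in the set of $E$ admitting a rank-$m$ subbundle $E'$ with $E'\in\Supp(f)$ and $E/E'\in\Supp(g)$, and one shows this set is relatively compact (equivalently, that its degree is bounded below and the bundles lie in a compact family, using that $\Supp(f),\Supp(g)$ are compact and that the extensions are parametrized by the compact torus $(R/\OO_K)^{mn}$ of Proposition~\ref{prop: torus}). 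Smoothness of $f*g$ follows because locally $f*g$ is a finite sum of smooth functions pulled back along the smooth structure maps; here I would invoke the orbifold structure on $\Bun_n$ and the fact that the assignment $E\mapsto\{E'\subset E\}$ varies smoothly, exactly as in \cite{originalpaper}. The grading $H_m\ten H_n\to H_{m+n}$ is clear from the definition.

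\textbf{Associativity.} For $f\in H_m,g\in H_n,h\in H_p$ I would expand both $(f*g)*h$ and $f*(g*h)$ evaluated at a bundle $E$ of rank $m+n+p$. Both sides become a sum over two-step flags $0\subset E_1\subset E_2\subset E$ with $\rk E_1=m$, $\rk E_2/E_1=n$: in the first grouping one first picks $E_2\subset E$ of rank $m+n$ then $E_1\subset E_2$; in the second one first picks a rank-$m$ subbundle then a rank-$n$ subbundle of the quotient, which corresponds to picking $E_1\subset E$ then $E_2/E_1\subset E/E_1$. The key point is that subbundles of a quotient $E/E_1$ lift bijectively to subbundles of $E$ containing $E_1$ (this follows from Proposition~\ref{prop: subbundle lore}, since such subbundles correspond to $K$-subspaces of $(L/L_1)\ten K$, which are the subspaces of $L\ten K$ containing $L_1\ten K$), so the two index sets agree. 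It then remains to check the degree prefactors match: using multiplicativity of $\deg$ along the flag, both sides produce the coefficient
\[
\deg(E_1)^{\frac{n+p}{2}}\,\deg(E_2/E_1)^{-\frac{m}{2}+\frac{p}{2}}\,\deg(E/E_2)^{-\frac{m+n}{2}}\, f(E_1)\,g(E_2/E_1)\,h(E/E_2),
\]
so the two expansions are literally equal term by term.

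\textbf{Unit.} The element $1\in H_0=C_c^\infty(\Bun_0)=\CC$ (the function $1$ on the one-point set $\Bun_0$, the zero bundle) satisfies $1*f=f*1=f$ because the only rank-$0$ subbundle is $0$ with quotient $E$, and the only rank-$n$ subbundle of rank-$n$ $E$ is $E$ itself with quotient $0$, and in both cases the degree factors are $1$.

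\textbf{Main obstacle.} The routine algebra (associativity, unit) is straightforward bookkeeping; the real content is the analytic claim in well-definedness — that $f*g$ lands in $C_c^\infty$ rather than merely in the space of smooth functions. Showing compact support requires controlling the family of extensions $E$ of a fixed $E/E'$ by a fixed $E'$, and this is where Proposition~\ref{prop: torus} (the $\Ext^1\cong(R/\OO_K)^{mn}$ compactness) is essential; showing smoothness requires the orbifold charts. I expect this analytic step, rather than the combinatorics of flags, to be the part needing the most care, and it is precisely the part the authors defer to the argument in \cite{originalpaper}.
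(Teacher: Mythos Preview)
Your proposal is correct and follows exactly the approach the paper defers to: the paper's own ``proof'' is just the sentence ``The proof is very similar to that in \cite{originalpaper},'' and what you have written is precisely the argument carried out there (finiteness of the sum via Proposition~\ref{prop: bounded degree}, compact support via the compact $\Ext^1$ torus of Proposition~\ref{prop: torus}, associativity via two-step flags and Proposition~\ref{prop: subbundle lore}). If anything, you have supplied more detail than the paper itself.
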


\begin{proof}
The proof is very similar to that in \cite{originalpaper}.
\end{proof}Now, let $SH$ be the \textit{spherical Hall algebra of }$\overline{\Spec(\OO_K)}$, which we define as the subalgebra $SH\subset H$ generated by $H_1$. In other words, $SH=\bigoplus_{n\geqslant0}H_1^{\otimes n}$. We define the map $*_{1^n}:H_1^{\otimes n}\to H_n$ as the \textit{multiplication map}, that is, $$\vp_1\otimes\dots\otimes\vp_n\mapsto\vp_1*\dots*\vp_n.$$

Translating the Hall product into group-theoretic terms, we have the following proposition.

\begin{prop}\label{prop: group version}
$f'*f''$ can be rewritten as 

$$(f'*f'')(g)=\sum_{\gamma\in P_{n',n''}(K)\backslash\GL_n(K)}f(\gamma g)$$where $P_{n',n''}$ is the parabolic subgroup of $\GL_n$, that is, the group of block lower triangular matrices, $g=(g',g'')$ and $f(\gamma g)=|\det(\gamma g')|^{\frac{n''}{2}}\cdot |\det(\gamma g'')|^{-\frac{n'}{2}}\cdot f'(\gamma g')f''(\gamma g'')$. This makes sense, as we have an isomorphism $$(\GL_n{n'}(K)\doublecoset{\GL_{n'}(\AA_K)}\widehat{\K_{n'}})\times(\GL_{n''}(K)\doublecoset{\GL_{n''}(\AA_K)}\widehat{\K_{n''}})\overset{\sim}{\longrightarrow}(U_{n',n''}(\AA_K)A_{n',n''}(K))\doublecoset{\GL_n(\AA_K)}\widehat{\K_n}$$coming from the Iwasawa decomposition.
\end{prop}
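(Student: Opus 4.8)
The plan is to reinterpret the Hall product purely in adelic terms and then recognize it as a sum over parabolic cosets. First I would translate the data: by Proposition~\ref{prop: Bun and double cosets}, a rank-$n$ bundle $E$ is a class $g \in \GL_n(K)\doublecoset{\GL_n(\A_K)}\widehat{\K_n}$, and by Proposition~\ref{prop: subbundle lore} a rank-$m$ subbundle $E' \subset E$ corresponds to an $m$-dimensional $K$-subspace $W' \subset K^n$ (after transporting the standard lattice via $g$). Fixing a reference subspace $W_0 = K^m \subset K^n$, the set of such $W'$ is a torsor under $\GL_n(K)$ with stabilizer exactly the parabolic $P_{m,n-m}(K)$ of block lower-triangular matrices, so rank-$m$ subbundles of $E$ are parametrized by $\gamma \in P_{m,n-m}(K)\backslash\GL_n(K)$ — this is the index set appearing in the claimed formula. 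The pair $(E', E/E')$ attached to $\gamma$ is read off from the block decomposition of $\gamma g$ via the Iwasawa/Levi decomposition $P_{m,n-m} = U_{m,n-m}A_{m,n-m}\cdot(\text{blocks})$, and one checks this recovers the stated isomorphism with $(U_{m,n-m}(\A_K)A_{m,n-m}(K))\doublecoset{\GL_n(\A_K)}\widehat{\K_n}$ on the right.

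Next I would match the weight factors. For a bundle of rank $k$ represented by $h$, the degree is $\deg = |\det h|_{\A_K}^{-1}$ up to the normalization fixed in Section~\ref{sec: vector bundles} (the norm $|\cdot|_\B$ on $\Bun_1$ being exactly the idelic norm of the determinant pushed to $\B$, and $\deg(E) = |\beta(E)|_\B$). So $\deg(E')^{n''/2}\deg(E/E')^{-m/2}$ becomes $|\det(\gamma g')|^{\pm n''/2}|\det(\gamma g'')|^{\mp m/2}$ with $g',g''$ the block components of $\gamma g$ under Iwasawa, which is precisely the factor in the definition of $f(\gamma g)$ in the statement (one must be careful about the sign convention relating $\deg$ and $|\det|$, but this is bookkeeping). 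Combined with $f'(\gamma g')f''(\gamma g'')$, the summand of the adelic formula agrees termwise with the summand of the geometric Hall product, so the two expressions are equal.

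The main obstacle — the only genuinely nontrivial point — is establishing the displayed isomorphism of double-coset spaces coming from the Iwasawa decomposition, i.e.\ that the block decomposition is \emph{well-defined} on double cosets: changing the representative $\gamma g$ by $P_{m,n-m}(K)$ on the left and by $\widehat{\K_n}$ on the right must change $(g',g'')$ only by the allowed left $\GL_{m}(K)\times\GL_{n-m}(K)$ and right $\widehat{\K_{m}}\times\widehat{\K_{n-m}}$ actions, and additionally the unipotent-plus-Levi part $U_{m,n-m}(\A_K)A_{m,n-m}(K)$ must be accounted for. Concretely one uses that $\GL_n(\A_K) = P_{m,n-m}(\A_K)\widehat{\K_n}$ (Iwasawa at each place, $\widehat{\K_n}$ being maximal compact at finite places and maximal compact at Archimedean ones) together with Corollary~\ref{cor: finite adeles} applied to the Levi factors, to see that the map sending a $P(\A_K)$-part to its Levi blocks descends to the claimed bijection; the compatibility with the Hall product's parametrization of subbundles (Proposition~\ref{prop: subbundle lore}) then closes the argument. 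Everything else is a direct unwinding of definitions and I would present it as such rather than in full detail.
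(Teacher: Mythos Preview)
The paper does not actually supply a proof of this proposition: it is stated as a direct translation of the Hall product into group-theoretic language and left without a \texttt{proof} block, the justification being implicit in the phrase ``Translating the Hall product into group-theoretic terms'' and the displayed Iwasawa isomorphism. Your outline is exactly the unwinding one would write out if asked to fill in the details, and it is correct: Proposition~\ref{prop: subbundle lore} identifies rank-$n'$ subbundles with $K$-subspaces of $K^n$, hence with points of the Grassmannian $P_{n',n''}(K)\backslash\GL_n(K)$; the Iwasawa decomposition at every place gives $\GL_n(\A_K)=P_{n',n''}(\A_K)\widehat{\K_n}$, from which the displayed double-coset isomorphism follows; and the degree weights match because $\deg$ is the idelic norm of the determinant (up to the fixed sign convention you flagged). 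So your proposal is more detailed than, but entirely consistent with, what the paper records.
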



\section{Hecke $L$-function associated to $K$}
\label{sec: L-function}

\blank Let $H$ be the hyperplane in $\RR^{r_1 + r_2} \simeq \RR_+^{r_1 + r_2}$ that is in the kernel of $\abs{\cdot}$, and let $\Lambda$ be the lattice $\OO_K^\times$ in $H$.
Define dual lattice $\Lambda^*= \br{\lambda^* \in H^* : \brk{\lambda^*, \lambda} \in \ZZ \text{ for all } \lambda \in \Lambda}$, which is isomorphic to $\ZZ^{r_1 + r_2 - 1}$ as $\ZZ$-modules.
Note that for we can define group homomorphism $\brk{\lambda^*, \cdot} : \RR^{r_1 + r_2} \to \RR$ to be the composition of the projection $\RR^{r_1 + r_2} \to H$ and $\brk{\lambda^*, \cdot} : H \to \RR$.

For $\lambda^* \in \Lambda^*, s \in \CC$ such that $\Re s > 1$, we define
\[
    L_K(\lambda^*, s) = \sum_{(x) \sub \OO_K} \frac{1}{\exp(2 \pi i \brk{\lambda^*, \log \abs{x}}) \N(x)^s},
\]
where the summation runs through all ideals in $\OO_K$ and $\N$ denote the norm in $K$.
Here by $\log \abs{x}$ we mean the tuple $\pr{\log \abs{\nu(x)}}_{\nu \in \s}$ in $\RR^{r_1 + r_2}$.

The Euler product of $L_K$ reads
\[
L_K(\lambda^*, s) = \prod_{\pp \sub \OO_K} \pr{1 - \exp(-2 \pi i \brk{\lambda^*, \log \abs{p}}) \N(p)^{-s}}\inv,
\]
where the summation runs through all prime ideals in $\OO_K$ and $p$ is (any) generator for $\pp$.

We use $\Mer((\Lambda^*)^n \times \CC^n)$ to denote the set of meromorphic functions whose domain is $(\Lambda^*)^n \times \CC^n$, that is, those functions $f : (\Lambda^*)^n \times \CC^n \to \CC$ such that for each $\lambda^* \in (\Lambda^*)^n$, the function $f(\lambda^*, \cdot) : \CC^n \to \CC$ is a meromorphic function.
Also, we use $\OO((\Lambda^*)^n \times \CC^n)$ to denote the set of entire functions whose domain is $(\Lambda^*)^n \times \CC^n$.

Now we let
\begin{align*}
    \Gamma_K(\lambda^*, s) & = \prod_{\nu \in \s_\RR} \Gamma(s / 2 + \pi i \lambda^*_\nu) \prod_{\nu \in \s_\CC} \Gamma(s + \pi i \lambda^*_\nu) \\
    L_K^*(\lambda^*, s) & = \pi^{-(r_1 / 2)s} (2 \pi)^{-r_2 s} \Gamma_K(\lambda^*, s) L_K(\lambda^*, s) \\
    \Phi_K(\lambda^*, s) & = \frac{L_K^*(\lambda^*, s)}{L_K^*(\lambda^*, s + 1)}.
\end{align*}

From \cite{neukirch} we see we can extend $L_K^*$ to a meromorphic function in $\Lambda^* \times \CC$, which satisfies the functional equation $L_K^*(-\lambda^*, 1 - s) = L_K^*(\lambda^*, s)$.
Thus, we have
\begin{align*}
    \Phi_K(-\lambda^*, -s) & = \frac{L_K^*(-\lambda^*, -s)}{L_K^*(-\lambda^*, -s + 1)} \\
        & = \frac{L_K^*(\lambda^*, s + 1)}{L_K^*(\lambda^*, s)} \\
        & = \Phi_K(\lambda^*, s)\inv.
\end{align*}

Finally, a \textit{Paley-Wiener function} in $\Lambda^* \times \CC$ is defined as follows.
For any $\lambda^* \in \Lambda^*$ there exists a unique $\lambda^\vee \in H$ such that $\brk{\lambda^*, x} = \lambda^\vee \cdot x$ for any $x \in H$, where $\cdot$ is the standard dot product on $H \sub \RR^{r_1 + r_2}$.
Then the set of Paley-Wiener functions, denoted by $\PW(\Lambda^* \times \CC)$, are functions in $\OO(\Lambda^* \times \CC)$ such that there exists $B \in \RR_+$ with the property that for any $N \in \ZZ^+$, we have
\[
\abs{f(\lambda^*, s)} = O\pr{(1 + \abs{s})^{-N} (1 + \abs{\Lambda^\vee})^{-N} \exp(B \abs{\Re s})}.
\]


\section{Shuffle algebra associated to $\Phi_K$}
\label{sec: shuffle algebra}

\blank Let $\Perm_n$ denote the symmetric group of permutations of $\db{1,n}$. For this paper, we have the following conventions:

\begin{itemize}
    \item For any $n$-tuple $s=(s_1,\dots,s_n)$, we denote $w(s)=(s_{w^{-1}(1)},\dots,s_{w^{-1}(n)})$, so that $(w w')(s) = w(w'(s))$.

    \item We embed $\Perm_n$ in $\GL_n$ via $w\mapsto \begin{pmatrix}e_{w(1)}&\dots&e_{w(n)}\end{pmatrix}$, where $e_i$ is the column vector whose $i$-th entry is 1 and its other entries are 0. Note that this embedding is an injective group homomorphism.
\end{itemize}For $w\in\Perm_n$, let \[\Phi_{K,w}(\lambda^*,s)=\prod_{\substack{1\leqslant i<j\leqslant n\\w(i)>w(j)}}\Phi(\lambda^*_j-\lambda^*_i,s_j-s_i)\]We have the following proposition, which will be helpful later.

\begin{prop}
\label{phi and inverse}
We have that $\Phi_{K,w}(\lambda^*,s)=\Phi_{K,w^{-1}}(-w(\lambda^*),-w(s))$.
\end{prop}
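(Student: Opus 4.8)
The statement $\Phi_{K,w}(\lambda^*,s)=\Phi_{K,w^{-1}}(-w(\lambda^*),-w(s))$ is purely combinatorial once we unwind the definitions, so I would prove it by directly comparing the two products, term by term, after a change of index. Recall $\Phi_{K,w}(\lambda^*,s)=\prod_{i<j,\,w(i)>w(j)}\Phi_K(\lambda^*_j-\lambda^*_i,\,s_j-s_i)$, where the product is over \emph{inversions} of $w$, i.e. pairs $(i,j)$ with $i<j$ but $w(i)>w(j)$. The key reindexing is the classical bijection between inversions of $w$ and inversions of $w^{-1}$: the pair $(i,j)$ is an inversion of $w$ if and only if the pair $(k,\ell):=(w(j),w(i))$ (note $k<\ell$) is an inversion of $w^{-1}$, since $w^{-1}(k)=j>i=w^{-1}(\ell)$.

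First I would write out $\Phi_{K,w^{-1}}(-w(\lambda^*),-w(s))$ explicitly. Using the convention $w(s)=(s_{w^{-1}(1)},\dots,s_{w^{-1}(n)})$, the $m$-th coordinate of $-w(s)$ is $-s_{w^{-1}(m)}$, so
\[
\Phi_{K,w^{-1}}(-w(\lambda^*),-w(s))=\prod_{\substack{k<\ell\\ w^{-1}(k)>w^{-1}(\ell)}}\Phi_K\bigl(-\lambda^*_{w^{-1}(\ell)}+\lambda^*_{w^{-1}(k)},\,-s_{w^{-1}(\ell)}+s_{w^{-1}(k)}\bigr).
\]
Now apply the bijection $(k,\ell)\mapsto(i,j)$ with $i=w^{-1}(\ell)$, $j=w^{-1}(k)$: the condition $k<\ell$ with $w^{-1}(k)>w^{-1}(\ell)$ becomes exactly $j>i$ with $w(j)<w(i)$, i.e. $(i,j)$ ranges over inversions of $w$ (with the roles swapped so $i<j$). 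Under this substitution the argument $(\lambda^*_{w^{-1}(k)}-\lambda^*_{w^{-1}(\ell)},\,s_{w^{-1}(k)}-s_{w^{-1}(\ell)})$ becomes $(\lambda^*_j-\lambda^*_i,\,s_j-s_i)$, which is precisely the factor appearing in $\Phi_{K,w}(\lambda^*,s)$. Hence the two products agree factor-by-factor and the identity follows.

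I do not anticipate a genuine obstacle here; the only thing to be careful about is bookkeeping with the two competing conventions — the permutation action $w(s)_m=s_{w^{-1}(m)}$ on tuples versus the matrix embedding $w\mapsto(e_{w(1)}\,\cdots\,e_{w(n)})$ — and the sign conventions in the arguments of $\Phi_K$. One should double-check that the $(i,j)\leftrightarrow(k,\ell)$ correspondence sends inversions to inversions in both directions and is an involution-compatible bijection, so that no factor is dropped or doubled. It may also be worth noting that no property of $\Phi_K$ itself (such as the functional equation $\Phi_K(-\lambda^*,-s)=\Phi_K(\lambda^*,s)^{-1}$) is needed for this particular proposition — it is a formal consequence of the indexing alone — although that relation is what makes such manipulations useful downstream.
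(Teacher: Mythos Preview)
Your proposal is correct and follows essentially the same route as the paper's own proof: both unwind the definition of $\Phi_{K,w^{-1}}(-w(\lambda^*),-w(s))$, use the convention $w(s)_m=s_{w^{-1}(m)}$, and reindex via the bijection $(k,\ell)\leftrightarrow(w^{-1}(\ell),w^{-1}(k))$ between inversions of $w^{-1}$ and inversions of $w$ to match the product defining $\Phi_{K,w}(\lambda^*,s)$ term by term. Your remark that no analytic property of $\Phi_K$ (in particular, not the functional equation) is needed here is accurate and worth keeping.
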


\begin{proof}
Note that 

\begin{align*}
\Phi_{K,w^{-1}}(-w(\lambda^*),-w(s))&=\prod_{\substack{1\leqslant i<j\leqslant n\\w^{-1}(i)>w^{-1}(j)}}\Phi_K((-w(\lambda^*))_i-(-w(\lambda^*))_j,(-w(s))_i-(-w(s))_j)\\
&=\prod_{\substack{1\leqslant i<j\leqslant n\\w^{-1}(i)>w^{-1}(j)}}\Phi_K(w(\lambda^*)_j-w(\lambda^*)_i,w(s)_j-w(s)_i)\\
&=\prod_{\substack{1\leqslant i<j\leqslant n\\w^{-1}(i)>w^{-1}(j)}}\Phi_K(\lambda^*_{w^{-1}j}-\lambda^*_{w^{-1}(i)},s_{w^{-1}j}-s_{w^{-1}(i)})\\
&=\prod_{\substack{1\leqslant w(i)<w(j)\leqslant n\\i>j}}\Phi_K(\lambda^*_{j}-\lambda^*_{i},s_{j}-s_{i})\\
&=\prod_{\substack{1\leqslant j<i\leqslant n\\w(j)>w(i)}}\Phi_K(\lambda^*_{j}-\lambda^*_{i},s_{j}-s_{i})\\
&=\prod_{\substack{1\leqslant i<j\leqslant n\\w(i)>w(j)}}\Phi_K(\lambda^*_{i}-\lambda^*_{j},s_{i}-s_{j})\\
&=\Phi_{K,w}(s)\\
\end{align*}as desired.    
\end{proof}

We define the set of $(m,n)$\textit{-shuffles} as the set $\Sh(m,n)$ of permutations $w\in\Perm_{m+n}$ such that $w(i)<w(j)$ whenever $i,j\in\db{1,m}$ or $i,j\in\db{m+1,m+n}$. For example, $\Sh(2,1)=\{\id,(2 \ 3),(1 \ 2\ 3)\}$. It is straightforward to verify that $\#\Sh(m,n)=\binom{m+n}{m}$.

Now, let us define the \textit{shuffle multiplication } $\shufflemult_{m, n} : \Mer((\Lambda^*\times\CC)^m) \otimes_\CC \Mer((\Lambda^*\times\CC)^n) \to \Mer((\CC \times \Lambda)^{m + n})$ by $F \otimes F' \mapsto F \shufflemult F'$, where
\[
(F \shufflemult F')(z_1, \ldots, z_{m + n}) = \sum_{w \in \Sh(m, n)} F(z_{w(1)}, \ldots, z_{w(m)}) F'(z_{w(m + 1)}, \ldots, z_{w(m + n)}) \cdot \Phi_{K, w\inv}(z_1, \ldots, z_{m + n}).
\]

\begin{prop}\label{prop: shuffle multiplication}
The shuffle multiplication makes $\bigoplus_{n\geqslant0}\Mer((\Lambda^*\times\CC)^n)$ into a graded associative algebra with unit 1.
\end{prop}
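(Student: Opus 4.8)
The plan is to verify the three claims — well-definedness, the unit property, and associativity — separately, treating associativity as the serious point. Well-definedness is essentially formal: for fixed $\lambda^*$-components the summands of $F \shufflemult F'$ are finite products and translates of meromorphic functions in the $s$-variables, multiplied by $\Phi_{K,w^{-1}}$, which is meromorphic by the meromorphic continuation of $L_K^*$ discussed in Section~\ref{sec: L-function}; hence the sum lies in $\Mer((\Lambda^*\times\CC)^{m+n})$. The unit property is immediate from $\Sh(m,0)=\Sh(0,n)=\{\id\}$ and $\Phi_{K,\id}=1$ (the defining product over inversions is empty). Grading is built into the definition of $\shufflemult_{m,n}$.

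For associativity I would follow the standard ``cosets/shuffles'' bookkeeping, exactly as in \cite{originalpaper}. Fix $F \in \Mer((\Lambda^*\times\CC)^\ell)$, $F' \in \Mer((\Lambda^*\times\CC)^m)$, $F'' \in \Mer((\Lambda^*\times\CC)^n)$ and expand both $(F\shufflemult F')\shufflemult F''$ and $F\shufflemult(F'\shufflemult F'')$. In each case one obtains a sum over pairs of shuffles, which by the usual decomposition $\Sh(\ell,m,n)$ — a permutation of $\db{1,\ell+m+n}$ that is increasing on each of the three blocks — can be reindexed as a single sum over $w \in \Sh(\ell,m,n)$, in two different ways corresponding to the two ways of bracketing. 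The combinatorial identity one needs is that every $w\in\Sh(\ell,m+n)$ composed with a shuffle of the last two blocks is uniquely an element of $\Sh(\ell,m,n)$, and symmetrically; this gives a bijection between the index sets of the two triple sums, compatible with the $F,F',F''$ factors. So the content reduces to the cocycle-type identity on the $\Phi_K$ factors: for the bijection above one must check
\[
\Phi_{K,w_1^{-1}}(z)\cdot \Phi_{K,w_2^{-1}}\bigl(\text{(appropriate block of }z)\bigr) = \Phi_{K,w^{-1}}(z)
\]
with the analogous identity on the other side, so that the two triple sums agree term by term. The key input here is multiplicativity of the inversion-product defining $\Phi_{K,w}$ under composition, i.e. that $\ell(w_1 w_2)=\ell(w_1)+\ell(w_2)$ forces the inversion sets to add, together with the fact that $\Phi_K$ depends only on differences $(\lambda^*_j-\lambda^*_i, s_j-s_i)$ so that restricting $z$ to a sub-block and then applying $\Phi_{K,\cdot}$ is consistent with reading off the relevant inversions of the big permutation.

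The main obstacle is precisely this last verification: tracking the inversions correctly through the two bracketings and confirming that no $\Phi_K$-factor is dropped or double-counted, and that the arguments $(\lambda^*_j-\lambda^*_i, s_j-s_i)$ land on the same pairs on both sides. Since the paper's $\Phi_{K,w}$ uses the convention $\Phi_K(\lambda^*_j-\lambda^*_i, s_j-s_i)$ for inversions $i<j$, $w(i)>w(j)$, I would set up a single lemma stating that for $w = \sigma \tau$ with $\ell(w)=\ell(\sigma)+\ell(\tau)$ one has $\Phi_{K,w}(z) = \Phi_{K,\sigma}(z)\cdot \Phi_{K,\tau}(\sigma^{-1}(z))$ — a mild variant of Proposition~\ref{phi and inverse} — and then deduce associativity by applying it to the two factorizations of each $w \in \Sh(\ell,m,n)$ coming from the two bracketings. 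Everything else (reindexing the sums, matching the $F$-factors) is routine and parallels \cite{originalpaper} verbatim, so I would state that explicitly and refer there for the details not involving $\Phi_K$.
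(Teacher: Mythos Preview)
Your proposal is correct, and in fact more detailed than the paper's own treatment: the paper does not prove this proposition at all but simply cites the reference \cite{shuffle}. Your outline is the standard argument for associativity of a shuffle algebra attached to a function satisfying a cocycle-type identity, and it goes through here without difficulty.

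Two remarks. First, the cocycle lemma you isolate --- multiplicativity of $\Phi_{K,w}$ along length-additive factorizations --- is not a variant of Proposition~\ref{phi and inverse} (which only relates $\Phi_{K,w}$ and $\Phi_{K,w^{-1}}$) but is exactly Lemma~\ref{lem: Phi is covariant is some sense}, proved later in the paper and in fact valid for arbitrary $w,w'$ without the length-additivity hypothesis, since $\Phi_K(-z)=\Phi_K(z)^{-1}$. So you could either invoke that lemma directly or, if you want the statement to be self-contained at this point in the paper, extract the length-additive special case, whose proof is a one-line inversion-set count. Second, be careful with the direction of the action in your displayed identity: with the paper's convention $w(s)=(s_{w^{-1}(1)},\dots,s_{w^{-1}(n)})$, the form that drops out is $\Phi_{K,\sigma\tau}(z)=\Phi_{K,\sigma}(\tau(z))\,\Phi_{K,\tau}(z)$, not the $\sigma^{-1}(z)$ version you wrote; this does not affect the argument but will matter when you match terms.
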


\begin{proof}
The proof can be found in \cite{shuffle}. 
\end{proof}

Let $\SH(\Phi_K)_\PW$ be the \textit{Paley-Wiener shuffle algebra associated to }$\Phi_K$, which we define as the subalgebra in $\left(\bigoplus_{n\geqslant0}\Mer((\Lambda^*\times\CC)^n),\shufflemult\right)$ generated by the subspace $\PW( \Lambda^*\times\CC)$.


\section{Fourier transform}
\label{sec: Fourier transform}

\blank We define the Fourier transform to be an operator $\F : \PW((\Lambda^* \times \CC)^n) \to  C_c^\oo(\B^n)$ as
\begin{align*}
    \F f(\lambda^*, s) & = \int_{x \in D} f(x) \abs{x}_\B^s x^{2 \pi i \lambda^*} \frac{\diff x}{\abs{x}_\B},
\end{align*}
where $D \sub \RR_+^{(r_1 + r_2)n}$ is a fundamental domain for $\B^n = (\OO_K^\times)^n \setminus \RR_+^{(r_1 + r_2)n}$.

Similarly, we define the inverse Fourier transform to be an operator $\G : \PW((\Lambda^* \times \CC)^n) \to  C_c^\oo(\B^n)$ as
\[\G g(r) = \frac{1}{(2\pi i)^{(r_1 + 2r_2)n}}\sum_{\lambda^*\in (\Lambda^*)^n}\int_{s \in \sigma_0 + i \RR^n} g(\lambda^*, s) \abs{r}_\B^{-s}r^{-2\pi i\lambda^*} \diff s.\]Here, $\sigma_0$ is chosen such that the integral converges.

It can be checked that $\F\G(g)=g$ and $\G\F(f)=f$. We also have the following generalization of a classical result.

\begin{prop}\label{prop: fourier is iso}

The Fourier transform $\F:H_1 = C^\infty_c(\B) \to \PW(\Lambda^*\times \CC)$ is an isomorphism.
\end{prop}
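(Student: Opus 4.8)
The plan is to show that $\F : C_c^\infty(\B) \to \PW(\Lambda^* \times \CC)$ is a well-defined bijection, essentially by reducing to the classical Paley-Wiener theorem on $\RR^{r_1+r_2}$ via the structure $\B = \OO_K^\times \backslash \prod_{\nu \in \s}\RR_+^\times \simeq (\RR^{r_1+r_2-1}/\Lambda) \times \RR_+^\times$. First I would take logarithms everywhere: writing $\RR_+^{\times\, r_1+r_2} \simeq \RR^{r_1+r_2}$ multiplicatively, a function $f \in C_c^\infty(\B)$ pulls back to a $\Lambda$-periodic smooth function $\tilde f$ on $\RR^{r_1+r_2}$ that is compactly supported in the $\RR_+^\times$ (i.e.\ the $|\cdot|_\B$) direction. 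Under this identification $\F f$ becomes the combined operation ``Fourier series in the $\Lambda$-periodic directions, Fourier--Laplace transform in the norm direction'': the discrete variable $\lambda^* \in \Lambda^*$ records the Fourier-series coefficient index on the torus $\RR^{r_1+r_2-1}/\Lambda$, and $s \in \CC$ records the Mellin/Laplace variable on $\RR_+^\times$. Concretely, $\F f(\lambda^*, s) = \int_{\RR} \Big(\int_{\RR^{r_1+r_2-1}/\Lambda} \tilde f(t + u)\, e^{2\pi i \langle \lambda^*, u\rangle}\, du\Big) e^{st}\, dt$ after choosing coordinates so that the last coordinate parametrizes $|\cdot|_\B$.

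Next I would verify the two directions. For $(\Rightarrow)$: given $f \in C_c^\infty(\B)$, the inner torus integral produces, for each $\lambda^*$, a smooth compactly-supported function of $t \in \RR$; its Fourier--Laplace transform is entire in $s$ with the exponential-type bound $O((1+|s|)^{-N}\exp(B|\Re s|))$ coming from compact support and smoothness, where $B$ is determined by the support of $f$ and is uniform in $\lambda^*$. Smoothness of $f$ in the torus directions gives rapid decay of the Fourier coefficients, i.e.\ the factor $(1 + |\Lambda^\vee|)^{-N}$; integration by parts in $t$ simultaneously gives the $(1+|s|)^{-N}$ decay uniformly in $\lambda^*$. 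Together these are exactly the defining estimates of $\PW(\Lambda^* \times \CC)$, so $\F f \in \PW(\Lambda^* \times \CC)$. For $(\Leftarrow)$, i.e.\ surjectivity and injectivity: the inverse is $\G$, already recorded in the text — the sum over $\lambda^* \in \Lambda^*$ reconstitutes the Fourier series on the torus and the contour integral over $\sigma_0 + i\RR$ inverts the Laplace transform. I would check that for $g \in \PW(\Lambda^*\times\CC)$ the Paley-Wiener bounds make the $\lambda^*$-sum and the $s$-integral in $\G g$ converge absolutely and define a smooth function, that the classical one-variable Paley-Wiener theorem forces $\G g$ to be supported in a fixed ball (so it descends to a compactly supported function on $\B$), and that $\F \G = \id$, $\G \F = \id$ — the latter two being the Fourier inversion and Fourier-series completeness statements, which the text already asserts hold.

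The main obstacle — and the only genuinely non-formal point — is handling the interaction between the \emph{discrete} torus-dual variable $\lambda^*$ and the \emph{continuous} Laplace variable $s$ in a single uniform estimate, since $\PW(\Lambda^*\times\CC)$ demands a \emph{common} constant $B$ and simultaneous polynomial decay in both $|s|$ and $|\Lambda^\vee|$. This requires being careful that the contour $\sigma_0$ in $\G$ can be chosen independently of $\lambda^*$ and that differentiating/integrating by parts in the torus and norm directions can be done jointly; concretely one uses that $f$ has support contained in a single compact set of $\B$, which bounds the $t$-support uniformly, and that $\tilde f \in C^\infty$ gives mixed partials of all orders, yielding the product bound after applying the one-dimensional estimates in each group of variables and multiplying. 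Modulo this bookkeeping, the statement is the classical Paley-Wiener theorem for $\RR \times (\text{torus})$ transported along the explicit description of $\B$, exactly as in \cite{originalpaper} where $\B = \RR_+^\times$ and no torus factor is present.
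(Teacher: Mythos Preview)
Your proposal is correct and is precisely the adaptation the paper has in mind: the paper's own proof is a one-line deferral to the classical Paley--Wiener theorem (Vol.~II, Thm.~IX.11 of \cite{fourier}), and what you have written is exactly how one carries that theorem over to $\B \simeq (\RR^{r_1+r_2-1}/\Lambda) \times \RR_+^\times$ by combining Fourier series on the torus factor with the Fourier--Laplace transform on the $\RR_+^\times$ factor. In other words, you have spelled out in full the argument the paper leaves implicit; there is no difference in approach, only in level of detail.
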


\begin{proof}
The proof can be adapted from Vol. II, Thm. IX.11 in \cite{fourier}.
\end{proof}


\section{The principal series intertwiners}
\label{sec: intertwiner}

\blank We define
\begin{align*}
    \C : \Lambda^* \times \CC & \to C^\oo(\B) \\
    \C(\lambda^*, s)(a) & = \abs{a}_\B^s a^{2 \pi i \lambda^*} = \abs{a}_\B^s \exp(2 \pi i \brk{\lambda^*, \log a})).
\end{align*}

For $w \in \Perm_n$, we define $U_w= U \cap (w\inv U w)$ and
\begin{align*}
    \ob{M}_w : C_c^\oo(U(\A_K) A_n(K) \doublecoset{\GL_n(\A_K)} \widehat{\K}_n) & \to C^\oo(U(\A_K) A_n(K) \doublecoset{\GL_n(\A_K)} \widehat{\K}_n) \\
    \ob{M}_w(\psi)(g) & = \int_{u \in U_w(\A_K) \setminus U(\A_K)} \psi(w u g) \diff u,
\end{align*}
where we are using the Tamagawa measure on $\A_K$.

We will see that $\diag : \B^n \to U(\A_K) A_n(K) \doublecoset{\GL_n(\A_K)} \widehat{\K}_n$ is a bijection, and we use a twist to identify their smooth functions together:
\begin{align*}
    \delta^{1/2}: C^\oo(\B^n) & \iso C^\oo(U(\A_K) A_n(K) \doublecoset{\GL_n(\A_K)} \K_n) \\
    \vp & \mapsto \left[\diag(a) \mapsto \vp(a) \delta(a)^{1/2}\right],
\end{align*}
where $\delta(a) = \prod_{1 \le i < j \le n} \frac{\abs{a_j}_\B}{\abs{a_i}_\B}$.

The fact that $\diag : \B^n \to U(\A_K) A_n(K) \doublecoset{\GL_n(\A_K)} \widehat{\K}_n$ is an isomorphism also gives us the following proposition, which can be proved by translating the Hall product formula into group-theoretical terms.

\begin{prop}\label{prop: * version}
If $\varphi \in H_1^{\otimes n}$, $g\in \B^n$, and $\~\varphi(g) = \vp(\diag\inv(g))\cdot\delta(g)^{-\frac{1}{2}}$, then
\[
(*_{1^n}(\varphi))(g)=\sum_{\gamma\in B_n(\OO_K) \backslash \GL_n(\OO_K)}\~\varphi(\gamma g).
\]
\end{prop}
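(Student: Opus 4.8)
The plan is to prove the formula by unwinding the Hall product into group-theoretic language, using Proposition~\ref{prop: group version} iteratively and then collapsing the resulting chain of parabolic cosets into a single Borel coset. First I would set up notation: for $\varphi = \varphi_1 \otimes \cdots \otimes \varphi_n \in H_1^{\otimes n}$ with each $\varphi_i \in H_1 = C_c^\infty(\B)$, we view each $\varphi_i$ as a function on $\GL_1(K) \backslash \GL_1(\A_K) / \widehat{\K}_1$. By Proposition~\ref{prop: group version}, applied $n-1$ times to peel off one rank-$1$ factor at a time, the iterated Hall product $*_{1^n}(\varphi) = \varphi_1 * \cdots * \varphi_n$, evaluated at $g \in \GL_n(K)\backslash\GL_n(\A_K)/\widehat{\K}_n$, becomes a sum over $\gamma$ running through $B_n(K) \backslash \GL_n(K)$ (the Borel being the intersection of the nested chain of parabolics $P_{1, n-1} \supset P_{1,1,n-2} \supset \cdots$, which is exactly the lower-triangular Borel $B_n$), of a product of the $\varphi_i$ evaluated on the diagonal entries of the Iwasawa decomposition of $\gamma g$, weighted by the appropriate powers of the determinants that appear in the Hall product. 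The key bookkeeping point is that the cumulative weight $\prod_{i<j} |\det(\gamma g)_i|^{\cdots} |\det(\gamma g)_j|^{-\cdots}$ coming from the recursive application of Proposition~\ref{prop: group version} assembles into precisely $\delta(a)^{1/2}$ where $a$ is the diagonal part; this is the calculation that motivates the $\delta^{1/2}$-twist built into the definition of $\tilde\varphi$.

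Next I would translate the double-coset index $B_n(K) \backslash \GL_n(K)$ into the form appearing in the statement. Using the Iwasawa-type decomposition isomorphism cited in Proposition~\ref{prop: group version} (and its evident $n$-fold analogue, together with Proposition~\ref{prop: Bun and double cosets} applied at rank $1$), one reduces the adelic double coset $\GL_n(K)\backslash \GL_n(\A_K)/\widehat{\K}_n$ to the arithmetic one $\GL_n(\OO_K)\backslash\GL_n(R)/\K_n$, under which $U(\A_K) A_n(K) \backslash \GL_n(\A_K)/\widehat{\K}_n$ corresponds to $\B^n$ via the map $\diag$. Under this dictionary $B_n(K)\backslash\GL_n(K)$ corresponds to $B_n(\OO_K)\backslash\GL_n(\OO_K)$: the point is that a coset representative can be chosen integral because $\OO_K = K \cap \bigcap_\p \OO_{K,\p}$ and $\GL_n(\OO_K) = \GL_n(K) \cap \prod_\p \GL_n(\OO_{K,\p})$, exactly as in the injectivity argument of Proposition~\ref{prop: Bun and double cosets}. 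Then for $\gamma \in B_n(\OO_K)\backslash\GL_n(\OO_K)$ and $g \in \B^n$, the Iwasawa diagonal part of $\gamma g$ (with its $\delta^{1/2}$ factor stripped off) is exactly $\diag^{-1}(\gamma g)$ evaluated factor-by-factor, which is how $\tilde\varphi(\gamma g) = \varphi(\diag^{-1}(\gamma g))\cdot \delta(\gamma g)^{-1/2}$ enters. Matching the weight from the first paragraph against $\delta^{1/2}$ then yields exactly $\sum_{\gamma} \tilde\varphi(\gamma g)$.

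I expect the main obstacle to be the weight bookkeeping: verifying that the product of the determinant factors $\deg(E')^{n/2}\deg(E/E')^{-m/2}$ accumulated over the $n-1$ recursive steps of the Hall product equals the single Iwasawa Jacobian $\delta(a)^{1/2} = \prod_{i<j}(|a_j|_\B/|a_i|_\B)^{1/2}$, and that this identity is independent of the order in which the rank-$1$ pieces are peeled off (which it must be, by associativity, Proposition~\ref{prop: Hall product}, but should be checked directly or at least explained). A secondary technical point is justifying convergence and the interchange of the finite sum over subbundles with the Iwasawa integral at each stage — but by Proposition~\ref{prop: bounded degree} all these sums are finite on compact subsets, so this is routine. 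Once the weight identity is in hand, the proposition follows by substituting it into the unwound Hall product and recognizing the result as $\sum_{\gamma \in B_n(\OO_K)\backslash\GL_n(\OO_K)}\tilde\varphi(\gamma g)$.
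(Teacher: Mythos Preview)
Your proposal is correct and follows precisely the approach the paper indicates: the paper gives no detailed proof of this proposition, stating only that it ``can be proved by translating the Hall product formula into group-theoretical terms'' using the bijection $\diag:\B^n \to U(\A_K)A_n(K)\backslash\GL_n(\A_K)/\widehat{\K}_n$, and your unwinding via iterated application of Proposition~\ref{prop: group version} together with the passage $B_n(K)\backslash\GL_n(K)\cong B_n(\OO_K)\backslash\GL_n(\OO_K)$ is exactly that translation. One small slip: the accumulated determinant weight is $\delta(a)^{-1/2}$, not $\delta(a)^{1/2}$, which is consistent with the $\delta^{-1/2}$ appearing in the definition of $\tilde\varphi$; your narrative has the sign reversed but the mechanism is right.
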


Finally, we define
\[
M_w: C_c^\oo(\B^n) \to C^\oo(\B^n)
\]
to be the composition
\[
C_c^\oo(\B^n) \go{\delta^{1/2}} C_c^\oo(U(\A_K) A_n(K) \doublecoset{\GL_n(\A_K)} \K_n) \go{\ob{M}_w} C^\oo(U(\A_K) A_n(K) \doublecoset{\GL_n(\A_K)} \K_n) \go{\delta^{-1/2}} C^\oo(\B^n).
\]
Note that we can also apply $M_w$ to functions in $C^\oo(\B^n)$ that decay fast enough.

One of our main goals is to compute
\[
M_w \C(\lambda^*, s) = M_w\pr{\bigotimes_{i = 1}^n \C(\lambda_i^*, s_i)}
\]
for $\lambda^* \in (\Lambda^*)^n, s \in \CC^n$.

\subsection{The diagonal map is bijective}

\blank In this subsection, we will prove the following proposition.

\begin{prop} \label{prop: copies of Bun1}
    The natural embedding $\RR_+^{r_1 + r_2} \hookrightarrow \RR^{r_1} \times \CC^{r_2} \hookrightarrow \A_K$ induces a bijection between sets
    \begin{align*}
        \B^n & \iso U_n(\A_K) A_n(K) \doublecoset{\GL_n(\A_K)} K_n \\
        \ob{\pl{r}{n}} & \mapsto \ob{\pr{(\id)_{p \in \PP_K}, \diag\pl{r}{n}}},
    \end{align*}
    where $r_i \in \prod_{\nu \in \s} \RR_+$.
\end{prop}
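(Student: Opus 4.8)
The plan is to establish the bijection $\B^n \iso U_n(\A_K) A_n(K) \doublecoset{\GL_n(\A_K)} K_n$ by reducing it to the $n=1$ case, which has essentially been computed already in Section~\ref{sec: vector bundles}, and then bootstrapping via the Iwasawa decomposition. First I would recall that for $n = 1$ the groups $U_1$ and $A_1$ are trivial and the identity $\GL_1, $ so the claim for $n=1$ is exactly $\B = \OO_K^\times \backslash \prod_{\nu \in \s} \RR_+^\times = \GL_1(K) \doublecoset{\GL_1(\A_K)} \widehat{\K}_1$, which follows from Proposition~\ref{prop: Bun and double cosets} together with the explicit computation of $\Bun_1$ given there. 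The content to check is that the map in the statement, sending $\ob{(r_1, \ldots, r_n)}$ to the class of the pair with identity at all finite places and $\diag(r_1, \ldots, r_n)$ at the Archimedean places, is a well-defined bijection for general $n$.

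For surjectivity I would start with an arbitrary $g = (g_\p)_\p \times (g_\nu)_\nu \in \GL_n(\A_K)$. By Corollary~\ref{cor: finite adeles} we may left-multiply by an element of $\GL_n(K)$ to assume $g_\p \in \GL_n(\OO_{K,\p})$ for every finite $\p$, i.e. the finite part lies in $\widehat{\K}_n$ and can be absorbed on the right. At the Archimedean places, the Iwasawa decomposition $\GL_n(R_\nu) = U_n(R_\nu) A_n(R_\nu) O_\nu$ lets us write each $g_\nu$ as $u_\nu a_\nu k_\nu$ with $a_\nu$ having positive diagonal entries; the $k_\nu$ part is killed by $\K_n$ on the right, and the collection $(u_\nu)_\nu$ — which I would need to realize as coming from an element of $U_n(\A_K)$ using that $U_n(R) \hookrightarrow U_n(\A_K)$ and that the finite-place unipotent parts are already trivial — is killed on the left. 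This leaves the class of $\diag((a_\nu)_\nu)$ with positive real diagonal, i.e. an element in the image of the stated map. One subtlety: the $A_n(K)$ factor in the double coset on the right is needed to account for $K^\times$-scaling ambiguities in each diagonal slot, matching the passage from $\prod_\nu \RR_+^\times$ to $\OO_K^\times \backslash \prod_\nu \RR_+^\times = \B$ in each of the $n$ coordinates.

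For injectivity, suppose $\diag(r)$ and $\diag(r')$ (both with trivial finite parts) are identified, so $\diag(r') = u \cdot \gamma \cdot \diag(r) \cdot (h_f, h_\infty)$ with $u \in U_n(\A_K)$, $\gamma \in A_n(K)$, $(h_f, h_\infty) \in \widehat{\K}_n$. Comparing finite parts forces $\gamma \in A_n(K) \cap \prod_\p \GL_n(\OO_{K,\p}) = A_n(\OO_K)$ (whose diagonal entries are units in $\OO_K$), by the same intersection argument $\OO_K = K \cap \bigcap_\p \OO_{K,\p}$ used in Proposition~\ref{prop: Bun and double cosets}. Comparing Archimedean parts and using uniqueness in the Iwasawa decomposition (the diagonal part of $u a k$ is determined up to the $O_\nu$-ambiguity, which for the diagonal torus is exactly sign/phase, already folded into passing to $\RR_+^\times$) shows $r$ and $r'$ differ coordinatewise by the image of $\gamma \in A_n(\OO_K)$, i.e. by $(\OO_K^\times)^n$ acting on $\prod_\nu \RR_+^\times$ in each coordinate — hence $\ob{r} = \ob{r'}$ in $\B^n$. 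Well-definedness of the map is the mirror of this last computation and is routine.

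The main obstacle I expect is the bookkeeping around the unipotent and torus factors: making precise that the Iwasawa-decomposition unipotent parts $(u_\nu)_{\nu \in \s}$ assemble into a genuine element of $U_n(\A_K)$ that can be removed on the left (which is clean since the finite-place components are already trivial after Corollary~\ref{cor: finite adeles}), and correctly identifying which ambiguity group acts on the diagonal entries — reconciling the $O_\nu$-ambiguity at Archimedean places with the quotient $\RR^\times/\{\pm 1\} = \RR_+^\times$ (and $\CC^\times / S_1 = \RR_+^\times$) exactly as in the $\Bun_1$ computation, and the $A_n(K)$ versus $A_n(\OO_K) = (\OO_K^\times)^n$ distinction on the right. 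Once these identifications are set up carefully, both surjectivity and injectivity follow by the same arguments that proved Proposition~\ref{prop: Bun and double cosets}, applied slot-by-slot.
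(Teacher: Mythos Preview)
Your injectivity argument is fine and matches the paper's. The gap is in surjectivity: you invoke Corollary~\ref{cor: finite adeles} to ``left-multiply by an element of $\GL_n(K)$'' in order to push the finite components into $\GL_n(\widehat{\OO}_K)$, but the double coset here is $U_n(\A_K) A_n(K) \doublecoset{\GL_n(\A_K)} \widehat{\K}_n$, so the only rational elements you are allowed to absorb on the left are those in $B_n(K) = U_n(K) A_n(K)$, not all of $\GL_n(K)$. Corollary~\ref{cor: finite adeles} produces a $\gamma \in \GL_n(K)$ with no control over its shape, so you cannot cancel it.

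This is exactly why the paper proves the sharper Lemma~\ref{lem: p parts cancel}, namely $\GL_n(K_\pp) = B_n(\OO_K[1/p])\,\GL_n(\OO_{K,\pp})$: the left factor now lies in the Borel over $K$, hence in $U_n(\A_K) A_n(K)$, and only then can the finite-place reduction go through. After that step the Archimedean Iwasawa decomposition (your argument, or the paper's Lemmas~\ref{lem: real part} and~\ref{lem: complex part}) finishes surjectivity. So your outline is right in spirit, but you need to replace the appeal to Corollary~\ref{cor: finite adeles} by a local Iwasawa-type decomposition at each $\pp$ whose Borel factor is genuinely $K$-rational.
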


To prove this proposition, we need a few lemmas.

\begin{lemma} \label{lem: p parts cancel}
    For prime ideal $(p) = \pp \sub \OO_K$, we have
    \[
    \GL_n(K_\pp) = B_n(\OO_K[1/p]) \GL_n(\OO_{K, \pp}),
    \]
    where $B_n = U_n A_n$ is the set of lower-triangular matrices.
\end{lemma}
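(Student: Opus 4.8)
The plan is to prove that $\GL_n(K_\pp) = B_n(\OO_K[1/p]) \GL_n(\OO_{K,\pp})$ by a Gaussian-elimination argument over the local ring $\OO_{K,\pp}$, which is a discrete valuation ring (indeed a PID), together with an approximation step to replace local data by global data in $\OO_K[1/p]$. First I would note that the analogous statement $\GL_n(K_\pp) = \GL_n(\OO_{K,\pp}) A_n(p^\ZZ) \GL_n(\OO_{K,\pp})$ is essentially the elementary-divisor (Smith normal form) theorem over the DVR $\OO_{K,\pp}$, together with the fact that its maximal ideal is $(p)$; this is already contained in the argument of Lemma~\ref{lem: union}. So the content here is the refinement that one of the two copies of $\GL_n(\OO_{K,\pp})$ can be pushed into the lower-triangular \emph{global} group $B_n(\OO_K[1/p])$.

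The key steps, in order, are as follows. First, given $g \in \GL_n(K_\pp)$, use column operations over $\OO_{K,\pp}$ — i.e., right multiplication by $\GL_n(\OO_{K,\pp})$ — to bring $g$ to lower-triangular form; this is possible because $\OO_{K,\pp}$ is a PID, so at each stage one can clear the entries to the right of a pivot of minimal valuation in a given row. This writes $g = \ell k_0$ with $\ell \in B_n(K_\pp)$ lower-triangular and $k_0 \in \GL_n(\OO_{K,\pp})$. Second, approximate the finitely many entries of $\ell$ (and the inverses of its diagonal entries, which are units in $K_\pp$) by elements of $\OO_K[1/p]$: since $\OO_K[1/p]$ is dense in $K_\pp$ and $\OO_K[1/p]^\times$ surjects onto $K_\pp^\times / \OO_{K,\pp}^\times$ (because $(p)$ generates the maximal ideal and every ideal of $\OO_K$ is principal, so every $\pp$-adic valuation class is realized by a power of $p$ times a $\pp$-unit in $\OO_K[1/p]$), we can find $\ell' \in B_n(\OO_K[1/p])$ with $\ell'^{-1} \ell \in \GL_n(\OO_{K,\pp})$ — for the diagonal part we match valuations exactly using powers of $p$, and for the strictly-lower entries we approximate $\pp$-adically closely enough that the difference lies in $\OO_{K,\pp}$. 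Third, conclude $g = \ell' \cdot (\ell'^{-1}\ell) \cdot k_0 \in B_n(\OO_K[1/p]) \GL_n(\OO_{K,\pp})$, and check the reverse inclusion, which is immediate since $\OO_K[1/p] \subset K_\pp$ and $\OO_{K,\pp} \subset K_\pp$.

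The main obstacle I anticipate is the second step: making the approximation argument precise when the triangularization forces the diagonal entries to have various $\pp$-adic valuations, so that ``approximating closely enough'' has to be calibrated pivot by pivot (an entry below a pivot of valuation $a_i$ needs to be approximated to precision better than $\pp^{a_i}$ after one factors out the pivot, or equivalently one argues on $\ell'^{-1}\ell$ directly). A clean way to organize this is to argue by induction on $n$: split off the first column, use the density of $\OO_K[1/p]$ to fix the $(1,1)$ entry up to a $\pp$-adic unit times a power of $p$ and kill the rest of the first row over $\OO_{K,\pp}$, then apply the inductive hypothesis to the lower-right $(n-1)\times(n-1)$ block; the block-triangular bookkeeping then replaces delicate global precision estimates. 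One should also remark that $B_n(\OO_K[1/p]) = U_n(\OO_K[1/p]) A_n(\OO_K[1/p])$ and that $A_n(\OO_K[1/p])$ contains $A_n(p^\ZZ)$, which is what ultimately carries the valuation data; this ties the statement back to Lemma~\ref{lem: union}.
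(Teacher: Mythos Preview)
Your proposal is correct and follows essentially the same route as the paper: first use column operations in $\GL_n(\OO_{K,\pp})$ to make $g$ lower-triangular, then use left multiplication by elements of $B_n(\OO_K[1/p])$ to reduce to something in $\GL_n(\OO_{K,\pp})$. The precision worry you raise dissolves once you normalize the diagonal entries to $1$ first (left multiply by $\diag(p^{-a_1},\ldots,p^{-a_n}) \in A_n(\OO_K[1/p])$), after which approximating each strictly-lower entry modulo $\OO_{K,\pp}$ by an element of $\OO_K[1/p]$ suffices---this is exactly how the paper organizes the argument, and it is what your final remark about $A_n(p^\ZZ)$ is pointing toward.
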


\begin{proof}
    Pick $g \in \GL_n(K_\pp)$.
    We do the following procedure.

    \begin{enumerate}
        \item After right multiply by a matrix in $\GL_n(\OO_{K, \pp})$, we can assume $v_\pp(g_{11}) \le v_\pp(g_{1j})$ for all $j \in \db{1, n}$.

        \item After right multiply by a matrix in $\GL_n(\OO_{K, \pp})$, we can assume $g_{1j} = 0$ for $j > 1$.

        \item Repeat Step 1 and 2, we can assume $g \in U_n(K_\pp)$.

        \item After right multiply by a matrix in $\GL_n(\OO_{K, \pp})$, we can assume $g_{ii} = p^{a_i}$ for some $a_i \in \ZZ$.

        \item After left multiply by a diagonal matrix in $B_n(\OO_K[1/p])$, we can assume $g_{ii} = 1$ for $i \in \db{1, n}$.

        \item After left multiply by a matrix in $U_n(\OO_K[1/p])$, we can assume $v_\pp(g_{i1}) \ge 0$ for $i > 1$.

        \item Repeat Step 6, we can assume $g_{ij} \in \OO_{K, \pp}$ for $i > j$.
    Now the resulting matrix is in $\GL_n(\OO_{K, \pp})$ and we finishes our proof.
    \end{enumerate}

\end{proof}

\begin{lemma} \label{lem: real part}
    We have a bijection between sets
    \begin{align*}
        \RR_+^n & \iso U_n(\RR) \doublecoset{\GL_n(\RR)} O_n \\
        \pl{r}{n} & \mapsto \ob{\diag\pl{r}{n}}.
    \end{align*}
\end{lemma}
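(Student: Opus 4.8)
The plan is to prove this directly using the Iwasawa decomposition for $\GL_n(\RR)$, exactly as one proves the analogous classical statement. The Iwasawa decomposition says every $g \in \GL_n(\RR)$ factors as $g = u \cdot a \cdot k$ with $u \in U_n(\RR)$ lower-triangular unipotent, $a = \diag(a_1, \dots, a_n)$ with each $a_i \in \RR_+$, and $k \in O_n$; moreover the diagonal part $a$ is unique (the flag-theoretic statement is that Gram--Schmidt on the rows of $g$ produces an orthogonal matrix, reading off a unique positive-diagonal triangular factor). This is standard and can be cited. From this I would extract the map $\GL_n(\RR) \to \RR_+^n$ sending $g \mapsto (a_1, \dots, a_n)$, and check it descends to the claimed bijection.

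Concretely, I would proceed in the following steps. First, I would show the map $\RR_+^n \to U_n(\RR) \backslash \GL_n(\RR) / O_n$ sending $(r_1, \dots, r_n) \mapsto [\diag(r_1, \dots, r_n)]$ is well-defined (trivial, it is a composite of inclusion and quotient) and surjective: given any double coset, pick a representative $g$, apply the Iwasawa decomposition $g = uak$, and note $g$ lies in the same double coset as $a = \diag(a_1, \dots, a_n)$, which is the image of $(a_1, \dots, a_n)$. Second, for injectivity, suppose $\diag(r_1, \dots, r_n)$ and $\diag(r_1', \dots, r_n')$ lie in the same double coset, i.e.\ $\diag(r') = u \cdot \diag(r) \cdot k$ for some $u \in U_n(\RR)$, $k \in O_n$; I would deduce $(r_1', \dots, r_n') = (r_1, \dots, r_n)$ by invoking the uniqueness of the diagonal part in the Iwasawa decomposition applied to $\diag(r') = u \diag(r) k$, whose unipotent--diagonal--orthogonal factorization must be $(u, \diag(r), k)$ since $\diag(r)$ already has positive diagonal. (Equivalently, compute $\diag(r')^T \diag(r') = k^T \diag(r)^T u^T u \diag(r) k$, or rather use $g g^T = ua(ua)^T$ and compare, reading off the $a_i^2$ recursively from the leading principal minors.)

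The only mild subtlety — and the step I expect to require the most care — is pinning down the correct normalization so that the uniqueness statement genuinely forces equality of the \emph{ordered} tuples $(r_1, \dots, r_n)$ rather than just their multiset; this is where the convention that $U_n$ consists of \emph{lower}-triangular unipotent matrices (so that the $i$-th diagonal entry is determined by the span of the first $i$ rows, read in order) matters. Since all the real work is the classical Iwasawa/Gram--Schmidt argument, I would keep the proof short, citing a standard reference for the Iwasawa decomposition and its uniqueness and then spelling out only the descent to the double-coset bijection. This lemma, together with the analogous statement for $\GL_n(\CC)$ and Lemma~\ref{lem: p parts cancel}, will assemble into the proof of Proposition~\ref{prop: copies of Bun1}.
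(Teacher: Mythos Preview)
Your proposal is correct and amounts to the same argument the paper gives. The only difference is packaging: you cite the Iwasawa decomposition and its uniqueness as a black box, whereas the paper unpacks exactly the computation you mention parenthetically---applying $A \mapsto A A^T$ to $\diag(r') = u\,\diag(r)\,o$ and reading off $u = \id$ from the fact that $u^{-1}\diag((r')^2) = \diag(r^2)\,u^T$ is simultaneously lower- and upper-triangular---and proves surjectivity by identifying $\GL_n(\RR)/O_n$ with positive definite forms and running Gram--Schmidt.
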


\begin{proof}
    To see the injectivity, let us say that $\pl{r}{n}$ and $\pl{r'}{n}$ have the same image, i.e., there exist $u \in U_n(\RR), o \in O_n$ such that $\diag\pl{r'}{n} = u \diag\pl{r}{n} o$.
    Then take their image under the map $A \mapsto A A^T$ gives $\diag\pr{(r_1')^2, \ldots, (r_n')^2} = u \diag\pl{r^2}{n} u^T$, or $u\inv \diag\pr{(r_1')^2, \ldots, (r_n')^2} = \diag\pl{r^2}{n} u^T$.
    However, LHS is lower-triangular, while RHS is upper-triangular, so we see both are diagonal, i.e., $u = \id$, which implies $\diag\pl{r}{n} = \diag\pl{r'}{n}$.

    Now we show the surjectivity.
    We identify $\GL_n(\RR) / O_n$ as the space of positive definite symmetric forms, say $X$, via the map $A \mapsto A A^T$.
    Then the action of $U_n(\RR)$ on $X$ is $u \cdot q = u q u^T$.
    If the matrix of $q$ corresponds to the basis $\bl{v}{n}$, then the matrix $u q u^T$ corresponds to applying $q$ to the basis $u (v_1\ v_2\ \cdots \ v_n)^T$, and we see we can always use such actions to make the basis to be orthogonal, which shows the original map is surjective.
\end{proof}

\begin{lemma} \label{lem: complex part}
    Let $\mathscr{U}_n$ be the set of unitary matrices in $\GL_n(\CC)$, then we have a bijection between sets
    \begin{align*}
        \RR_+^n & \iso U_n(\CC) \doublecoset{\GL_n(\CC)} \mathscr{U}_n \\
        \pl{r}{n} & \mapsto \ob{\diag\pl{r}{n}}.
    \end{align*}
\end{lemma}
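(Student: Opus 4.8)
The plan is to mirror the proof of the real case (Lemma~\ref{lem: real part}) using the map $A \mapsto A A^*$, where $A^*$ denotes the conjugate transpose. First I would identify $\GL_n(\CC) / \mathscr{U}_n$ with the space of positive-definite Hermitian forms via $A \mapsto A A^*$; this is the standard polar/Cholesky decomposition over $\CC$, and it is a bijection onto the positive-definite Hermitian matrices. Under this identification the left action of $U_n(\CC)$ (lower-triangular unipotent matrices) on a Hermitian form $q$ is $u \cdot q = u q u^*$.

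For injectivity, suppose $\pl{r}{n}$ and $\pl{r'}{n}$ give the same class, so $\diag\pl{r'}{n} = u \diag\pl{r}{n} o$ for some $u \in U_n(\CC)$, $o \in \mathscr{U}_n$. Applying $A \mapsto A A^*$ and using $o o^* = \id$ and that the $r_i$ are real positive gives $\diag\pr{r_1'^2, \ldots, r_n'^2} = u \diag\pl{r^2}{n} u^*$, hence $u\inv \diag\pr{r_1'^2, \ldots, r_n'^2} = \diag\pl{r^2}{n} u^*$; the left side is lower-triangular and the right side is upper-triangular, so both are diagonal, forcing $u$ to be a diagonal unipotent matrix, i.e.\ $u = \id$, and then $r_i = r_i'$ for all $i$. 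For surjectivity, given a positive-definite Hermitian form $q$ with Gram matrix relative to the standard basis $\bl{v}{n}$, I would run Gram--Schmidt: there is a lower-triangular unipotent $u$ whose rows express an orthogonal (with respect to $q$) basis in terms of the $v_i$, so $u q u^*$ is diagonal with positive real entries; rescaling is not needed since we only want to land in the $U_n(\CC)$-orbit of some $\diag\pl{r}{n}$ with $r_i > 0$. This exhibits every class as the image of a tuple in $\RR_+^n$.

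The only point requiring a little care — and the step I would flag as the main (mild) obstacle — is the verification that $u$ forced to be unipotent and diagonal is actually the identity, together with being careful that in the complex case $q$ is Hermitian rather than symmetric, so that the transpose in the real argument must be replaced throughout by the conjugate transpose; the ``lower-triangular vs.\ upper-triangular'' argument goes through verbatim once one notes that the conjugate transpose of a lower-triangular matrix is upper-triangular. Everything else is a direct transcription of the proof of Lemma~\ref{lem: real part}, so the write-up should be short.

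\begin{proof}
    The argument is identical to that of Lemma~\ref{lem: real part}, replacing the transpose $A^T$ by the conjugate transpose $A^*$ throughout. We identify $\GL_n(\CC) / \mathscr{U}_n$ with the space of positive-definite Hermitian forms via $A \mapsto A A^*$, under which the action of $u \in U_n(\CC)$ is $u \cdot q = u q u^*$.

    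For injectivity, suppose $\diag\pl{r'}{n} = u \diag\pl{r}{n} o$ with $u \in U_n(\CC)$, $o \in \mathscr{U}_n$. Applying $A \mapsto A A^*$ gives $\diag\pr{r_1'^2, \ldots, r_n'^2} = u \diag\pl{r^2}{n} u^*$, hence $u\inv \diag\pr{r_1'^2, \ldots, r_n'^2} = \diag\pl{r^2}{n} u^*$. The left-hand side is lower-triangular while the right-hand side is upper-triangular, so both are diagonal; since $u$ is unipotent lower-triangular, this forces $u = \id$, and therefore $\diag\pl{r}{n} = \diag\pl{r'}{n}$.

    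For surjectivity, let $q$ be a positive-definite Hermitian form with Gram matrix relative to the standard basis $\bl{v}{n}$. By Gram--Schmidt there is a lower-triangular unipotent matrix $u$ such that the rows of $u$ express a $q$-orthogonal basis in terms of $v_1, \ldots, v_n$, so $u q u^*$ is diagonal with positive real diagonal entries $r_1^2, \ldots, r_n^2$. Hence the class of $q$ is the image of $\pl{r}{n} \in \RR_+^n$, which proves surjectivity.
\end{proof}
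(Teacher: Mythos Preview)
Your proposal is correct and takes exactly the approach the paper intends: the paper's own proof is the single sentence ``Just mimic Lemma~\ref{lem: real part},'' and what you have written is precisely that mimicry, with the transpose replaced by the conjugate transpose throughout. Nothing needs to be added or changed.
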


\begin{proof}
    Just mimic Lemma~\ref{lem: real part}.
\end{proof}

Now we are at a position to prove Proposition~\ref{prop: copies of Bun1}.

\begin{proof} [Proof of Proposition~\ref{prop: copies of Bun1}]
    The map is clearly well-defined.
    We first show that it is surjective.

    Pick any $\ob{g} = \ob{\pr{(g_\pp)_{\pp \in \Spec(\OO_K)}, (g_\nu)_{\nu \in \s}}} \in U_n(\A_K) A_n(K) \doublecoset{\GL_n(\A_K)} K_n$, where $g_\pp \in \GL_n(\OO_{K, \pp})$ for $\pp \in \Spec(\OO_K)-\{\p_1,\dots,\p_m\}$ for some primes $\p_1,\dots,\p_m$.
    For $\pp \in \{\p_1,\dots,\p_m\}$, from Lemma~\ref{lem: p parts cancel} we can write $g_\pp = u_\pp a_\pp h_\pp \in U_n(\OO_K[1/p]) A_n(\OO_K[1/p]) \GL_n(\OO_{K, \pp})$ (where $(p) = \pp$).
    Also, from Lemma~\ref{lem: real part} and Lemma~\ref{lem: complex part}, for each $\nu \in \s$, we can write $g_\nu = u_\nu a_\nu o_\nu \in U_n(K_\nu) A_n(\RR_+) O_\nu$.
    Now take $a = \prod_{\pp \in \{\p_1,\dots,\p_m\}} a_\pp \in A_n(K)$, then
    \begin{align*}
        \ob{g} & = \ob{a\inv g}\\
            & = \ob{\pr{\pr{(a\inv u_\pp a) \pr{\prod_{\pp' \in \PP' - \br{\pp}} a_{\pp'}\inv} h_\pp}_{\pp \in \{\p_1,\dots,\p_m\}}, \pr{a\inv g_\pp}_{\pp \in \Spec(\OO_K) - \{\p_1,\dots,\p_m\}}, \pr{(a\inv u_\nu a) a\inv a_\nu o_\nu}_{\nu \in \s}}} \\
            & = \ob{\pr{(\id)_{\pp \in \Spec(\OO_K)}, (a\inv a_\nu)_{\nu \in \s}}},
    \end{align*}
    which is in the image of the map.

    Now we show that the map is injective.
    To do this, define map $\vp : K^\times \to \prod_{\nu \in \s} \RR_+, x \mapsto \abs{\nu(x)}$ and assume $\ob{\pl{r}{n}}$ has the same image as $\ob{\pl{r'}{n}}$, where $r_i, r_i' \in \prod_{\nu \in \s} \RR_+$.
    Then there exists
    \[
    \pr{(u_\pp)_{\pp \in \Spec(\OO_K)}, (u_\nu)_{\nu \in \s}} \in U_n(\A_K), \diag\pl{x}{n} \in A_n(K), \pr{(h_\pp)_{\pp \in \Spec(\OO_K)}, (h_\nu)_{\nu \in \s}} \in K_n
    \]
    such that
    \[
    \pr{(\id)_{\pp \in \Spec(\OO_K)}, \diag\pl{r'}{n}} = \pr{(u_\pp)_{\pp \in \Spec(\OO_K)}, (u_\nu)_{\nu \in \s}} \diag\pl{x}{n} \pr{(\id)_{\pp \in \Spec(\OO_K)}, \diag\pl{r}{n}} \pr{(h_\pp)_{\pp \in \Spec(\OO_K)}, (h_\nu)_{\nu \in \s}}.
    \]

    Now for $\pp \in \Spec(\OO_K)$, we have $u_\pp \diag\pl{x}{n} h_\pp = \id$, so both $u_\pp \diag\pl{x}{n}$ and $\diag\pl{x\inv}{n} u_\pp\inv$ lie in $\GL_n(\OO_{K, \pp})$, which implies $x_i \in \OO_{K, \pp}^\times$.
    This shows $x_i \in K^\times \cap \bigcap_{\pp \in \Spec(\OO_K)} \OO_{K, \pp}^\times = \OO_K^\times$.

    Now notice that in $\prod_{\nu \in \s} U_n(K_\nu) \doublecoset{\GL_n(K_\nu)} O_\nu$, we have
    \begin{align*}
        \ob{\diag\pl{r'}{n}} & = \ob{\diag\pl{x}{n} \diag\pl{r}{n}} \\
            & = \ob{\diag(\vp(x_i) r_i)_{i = 1}^n},
    \end{align*}
    so from Lemma~\ref{lem: real part} and Lemma~\ref{lem: complex part} we see $r_i' = \vp(x_i) r_i$, which implies $\ob{r_i'} = \ob{r_i}$ in $\B$ for all $i \in \db{1, n}$, hence shows that the original map is injective and finishes our proof.
\end{proof}

\subsection{Intertwiner of the exponential function}

\blank Let us compute the result of applying the intertwiner to the function $\C(\lambda^*,s)(a)$. In particular, we claim the following.

\begin{prop}
$M_w(\C(\lambda^*,s))(a)=\C(w^{-1}(\lambda^*),w^{-1}(s))(a)\cdot\Phi_{K,w^{-1}}(\lambda^*,s)$.
\end{prop}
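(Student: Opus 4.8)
The plan is to reduce the computation of $M_w\C(\lambda^*,s)$ to the rank-two case by a standard factorization of the intertwiner over simple reflections, and then to carry out the key rank-two integral explicitly. First I would recall that $\C(\lambda^*,s) = \bigotimes_{i=1}^n \C(\lambda_i^*,s_i)$, viewed as a function on $U_n(\A_K)A_n(K)\doublecoset{\GL_n(\A_K)}\widehat{\K}_n$ via the twist $\delta^{1/2}$; this is exactly the normalized principal series vector attached to the character $(\lambda^*,s)$ of the diagonal torus. The operator $\ob M_w(\psi)(g) = \int_{U_w(\A_K)\backslash U(\A_K)} \psi(wug)\,du$ is the classical standard intertwining operator, and it satisfies the cocycle/multiplicativity property $\ob M_{w_1 w_2} = \ob M_{w_1}\circ \ob M_{w_2}$ whenever $\ell(w_1 w_2) = \ell(w_1)+\ell(w_2)$, with the measures matching up because $U_{w_1 w_2} = U_{w_1}\cdot w_1 U_{w_2} w_1^{-1}$ decomposes the unipotent integration domain accordingly. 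So the first real step is to establish (or invoke) this factorization and reduce to $w = s_k$ a simple transposition, where the integral is over a single root subgroup $\cong \A_K$.

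The second step is the rank-two computation: for $w = s_k$, only the $k,k{+}1$ coordinates are touched, so $M_{s_k}\C(\lambda^*,s)(a)$ factors as $\prod_{i\neq k,k+1}\C(\lambda_i^*,s_i)(a_i)$ times the $\GL_2$-intertwiner applied to $\C(\lambda_k^*,s_k)\otimes\C(\lambda_{k+1}^*,s_{k+1})$. The $\GL_2$ integral, after the Iwasawa decomposition, becomes an integral over $\A_K$ of the form $\int_{\A_K}\bigl|\tfrac{1}{1+x^2}\text{-type factor}\bigr|\cdots\,dx$, which by the standard Gindikin--Karpelevich / Tate local-and-global computation evaluates to $\C(s_k(\lambda^*),s_k(s))(a)$ times the ratio of completed $L$-values $\dfrac{L_K^*(\lambda_{k+1}^*-\lambda_k^*,\, s_{k+1}-s_k)}{L_K^*(\lambda_{k+1}^*-\lambda_k^*,\, s_{k+1}-s_k+1)} = \Phi_K(\lambda_{k+1}^*-\lambda_k^*,\,s_{k+1}-s_k)$; the Archimedean $\Gamma_K$ factors and the $\pi^{-\cdots}$, $(2\pi)^{-\cdots}$ normalizations are precisely what make the real and complex places assemble into $L_K^*$, and the Tamagawa measure normalization is what kills the discriminant and class-number-one constants. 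This single factor is exactly the factor $\Phi_{K,s_k^{-1}}(\lambda^*,s)$ appearing in the definition of $\Phi_{K,w}$ for a simple reflection.

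The third step is bookkeeping: multiply the simple-reflection factors along a reduced word $w = s_{k_1}\cdots s_{k_\ell}$ for $w^{-1}$ (or $w$), tracking how the torus character gets permuted at each stage, and check that the accumulated product of $\Phi_K$-factors is exactly $\Phi_{K,w^{-1}}(\lambda^*,s) = \prod_{i<j,\ w^{-1}(i)>w^{-1}(j)}\Phi_K(\lambda_j^*-\lambda_i^*,s_j-s_i)$, with the final character being $\C(w^{-1}(\lambda^*),w^{-1}(s))$. Here Proposition~\ref{phi and inverse} and the convention $w(s) = (s_{w^{-1}(1)},\dots,s_{w^{-1}(n)})$ are used to match signs and indices; the inversions of $w^{-1}$ are in natural bijection with the roots flipped across the reduced word, so the telescoping product of local factors gives exactly the claimed global product, independently of the chosen reduced word (which is automatic from the cocycle relation).

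The main obstacle is the rank-two local integral together with making the normalizations match: one has to verify carefully that the real-place integral produces $\pi^{-s/2}\Gamma(s/2+\pi i\lambda^*)$-type factors, the complex-place integral produces $(2\pi)^{-s}\Gamma(s+\pi i\lambda^*)$-type factors, and the finite-place integrals produce the Euler product $\prod_\pp(1-\exp(-2\pi i\langle\lambda^*,\log|p|\rangle)\N(p)^{-s})^{-1}$, so that they combine into $L_K^*(\lambda^*,s)/L_K^*(\lambda^*,s+1)$ with no stray constants; this is where the class-number-one hypothesis and the Tamagawa measure normalization (making $\Vol(K\backslash\A_K)=1$, via the compatibility between the discriminant and the Archimedean measure) enter essentially. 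The cocycle factorization of intertwiners and the bookkeeping are routine once this is in hand.
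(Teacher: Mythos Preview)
Your proposal is correct and follows essentially the same route as the paper: reduce to simple reflections via the length-additive factorization $M_{w'}M_w = M_{ww'}$ (the paper proves this as a separate proposition), carry out the rank-two adelic integral place by place to obtain the single $\Phi_K$ factor, and then induct on length while tracking the permuted character using the cocycle identity $\Phi_{K,w'}(w(z))\Phi_{K,w}(z)=\Phi_{K,w'w}(z)$. The only point where you are slightly optimistic is the claim that the Tamagawa normalization ``kills'' the discriminant constant---in the paper's computation a factor $\disc(\OO_K)^{-1/2}$ survives and must be absorbed into the definition of the completed $L$-function---but the overall architecture is the same.
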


 Let us show this. We will first deal with the case $n = 2, w = (1 2)$.
To simplify the notations, we will compute
\[
M_w\pr{\C(\lambda^*, s) \ten \C(\mu^*, t)}(a, b).
\]

To do this, we will first recall the construction of the inverse map of $\diag : \B^n \iso U_n(\A_K) A_n(K) \doublecoset{\GL_n(\A_K)} \widehat{\K}_n$.
Let $\s$ denote the set of Archimedean places.
Take any $\ob{g} = \ob{\pr{(g_\pp)_{\pp \in \Spec(\OO_K)}, (g_\nu)_{\nu \in \s}}} \in U_n(\A_K) A_n(K) \doublecoset{\GL_n(\A_K)} \widehat{\K}_n$.
We write $g_\pp = u_\pp a_\pp k_\pp \in U_n(K_\pp) A_n(\br{p^\ZZ}) \GL_n(\OO_{K, \pp}), g_\nu = u_\nu a_\nu k_\nu \in U_n(K_\nu) A_n(\RR_+) O_\nu$, where $O_\nu$ is the maximal compact subgroup of $K_\nu$, and $(p) = \pp$.
Then $g$ corresponds to
\[
\pr{\prod_{\pp \in \Spec(\OO_K)} a_\pp\inv} \ob{\pr{a_\nu}}_{\nu \in \s} = \ob{\pr{a_\nu \prod_{\pp \in \Spec(\OO_K)} \abs{\nu(a_\pp)}\inv}}_{\nu \in \s}
\]
in $\B^n$ (after dediagonalizing).
From this and the fact that $\C(\lambda^*, s)$ is a group homomorphism we see we can compute the integration on each place separately, then multiply them together to get the integration result.
We split the computation into 3 parts.

\Skip \textbf{Part 1}: $\nu \in \s_\RR$ (the set of real archimedean places).
We want to find a explicit inverse map of $\diag : \RR_+^2 \to U_2(\RR) \doublecoset{\GL_2(\RR)} O_2$.
To do this, pick $\ob{\smat{a}{b}{c}{d}} \in U_2(\RR) \doublecoset{\GL_2(\RR)} O_2$, then we first do map $A \mapsto A A^T$ to map it to a symmetric form $\smat{a^2 + b^2}{ac + bd}{ac + bd}{c^2 + d^2}$, and then note that in symmetric form $\smat{x}{y}{y}{z}$, after letting $v_2' = v_2 - (y / x) v_1$, the symmetric form becomes $\smat{x}{0}{0}{z - y^2 / x}$.
Thus, under this operation, the inverse map sends $\ob{\smat{a}{b}{c}{d}} \in U_2(\RR) \doublecoset{\GL_2(\RR)} O_2$ to
\[
\pr{\rt{a^2 + b^2}, \rt{c^2 + d^2 - \frac{(ac + bd)^2}{a^2 + b^2}}} = \pr{\rt{a^2 + b^2}, \frac{\abs{\det\smat{a}{b}{c}{d}}}{\rt{a^2 + b^2}}} \in \RR_+^2.
\]

Now let us start integrating.
We have $U_w(\RR) = \br{\id}$ and $U_w(\RR) \setminus U(\RR) = \br{\smat{1}{0}{\ast}{1}}$, and apply the integral to $(a, b)$ the place $\nu$ gives contribution
\begin{align*}
        & a_\nu^{1/2} b_\nu^{-1/2} \int_{x \in \RR} \pr{\delta^{1/2} \cdot \C(\lambda^*, s) \ten \C(\mu^*, t)}\pr{\mat{0}{1}{1}{0} \mat{1}{0}{x}{1} \mat{a_\nu}{0}{0}{b_\nu}} \diff x \\
    =\ & a_\nu^{1/2} b_\nu^{-1/2} \int_{x \in \RR} \pr{\delta^{1/2} \cdot \C(\lambda^*, s) \ten \C(\mu^*, t)} \mat{a_\nu x}{b_\nu}{a_\nu}{0} \diff x \\
    =\ & a_\nu^{1/2} b_\nu^{-1/2} \int_{x \in \RR} \pr{\rt{a_\nu^2 x^2 + b_\nu^2}}^{-1/2} \pr{\frac{a_\nu b_\nu}{\rt{a_\nu^2 x^2 + b_\nu^2}}}^{1/2} \pr{\C(\lambda^*, s) \ten \C(\mu^*, t)}\pr{\rt{a_\nu^2 x^2 + b_\nu^2}, \frac{a_\nu b_\nu}{\rt{a_\nu^2 x^2 + b_\nu^2}}} \diff x \\
    =\ & a_\nu^{1/2} b_\nu^{-1/2} \int_{x = -\oo}^\oo \pr{\C(\lambda^*, s - 1/2) \ten \C(\mu^*, t + 1/2)}\pr{\rt{a_\nu^2 x^2 + b_\nu^2}, \frac{a_\nu b_\nu}{\rt{a_\nu^2 x^2 + b_\nu^2}}} \diff x \\
    =\ & a_\nu^{1/2} b_\nu^{-1/2} a_\nu^{t + 1/2 + 2\pi i \mu^*_\nu} b_\nu^{t + 1/2 + 2\pi i \mu^*_\nu} \int_{x = -\oo}^\oo \pr{a_\nu^2 x^2 + b_\nu^2}^{\frac{s - t - 1 + 2\pi i(\lambda^*_\nu - \mu^*_\nu)}{2}} \diff x \\
    =\ & a_\nu^{t + 1 + 2 \pi i \mu^*_\nu} b_\nu^{t + 2 \pi i \mu^*_\nu} b_\nu^{s - t - 1 + 2\pi i(\lambda^*_\nu - \mu^*_\nu)} \frac{b_\nu}{a_\nu} \rt{\pi} \frac{\Gamma\pr{\frac{t - s}{2} + \pi i (\mu^*_\nu - \lambda^*_\nu)}}{\Gamma\pr{\frac{t - s + 1}{2} + \pi i (\mu^*_\nu - \lambda^*_\nu)}} \\
    =\ & a_\nu^{t + 2 \pi i \mu^*_\nu} b_\nu^{s + 2 \pi i \lambda^*_\nu} \rt{\pi} \frac{\Gamma\pr{\frac{t - s}{2} + \pi i (\mu^*_\nu - \lambda^*_\nu)}}{\Gamma\pr{\frac{t - s + 1}{2} + \pi i (\mu^*_\nu - \lambda^*_\nu)}}
\end{align*}
when $\Re(t - s) > 0$ since
\[
\int_{x \in \RR} \pr{a x^2 + b}^s \diff x = b^s \rt{\frac{b}{a}} \rt{\pi} \frac{\Gamma(-s - 1/2)}{\Gamma(-s)}
\]
when $a, b \in \RR_+, \Re s < -1/2$.

\Skip \textbf{Part 2}: $\nu \in \s_\CC$ (the set of complex archimedean places).
Let us first get the inverse map of $\diag : \RR_+^2 \iso U_2(\CC) \doublecoset{\GL_2(\CC)} \mathscr{U}_2$, where $\mathscr{U}_2$ is the unitary group.
Take any $\ob{\smat{a}{b}{c}{d}} \in U_2(\CC) \doublecoset{\GL_2(\CC)} \mathscr{U}_2$.
Then the map $A \mapsto A A^*$ sends it to $\smat{\abs{a}^2 + \abs{b}^2}{a \ob{c} + b \ob{d}}{\ob{a} c + \ob{b} d}{\abs{c}^2 + \abs{d}^2}$, and note that in Hermitian form $\smat{x}{y}{\ob{y}}{z}$, after letting $v_2' = v_2 - (\ob{y}/x) v_1$, we see the matrix becomes $\smat{x}{0}{0}{z - \abs{y}^2 / x}$.
Thus, under this operation, the inverse map sends $\ob{\smat{a}{b}{c}{d}} \in U_2(\CC) \doublecoset{\GL_2(\CC)} \mathscr{U}_2$ to
\[
\pr{\rt{\abs{a}^2 + \abs{b}^2}, \rt{\abs{c}^2 + \abs{d}^2 - \frac{\abs{a\ob{c} + b\ob{d}}^2}{\abs{a}^2 + \abs{b}^2}}} = \pr{\rt{\abs{a}^2 + \abs{b}^2}, \frac{\abs{\det\smat{a}{b}{c}{d}}}{\rt{\abs{a}^2 + \abs{b}^2}}} \in \RR_+^2.
\]

Then similarly, apply the place $\nu$ to $(a, b)$ yields contribution
\begin{align*}
        & a_\nu b_\nu\inv \int_{x \in \CC} \pr{\delta^{1/2} \cdot \C(\lambda^*, s) \ten \C(\mu^*, t)}\pr{\mat{0}{1}{1}{0} \mat{1}{0}{x}{1} \mat{a_\nu}{0}{0}{b_\nu}} \diff x \\
    =\ & a_\nu b_\nu\inv \int_{x \in \CC} \pr{\delta^{1/2} \cdot \C(\lambda^*, s) \ten \C(\mu^*, t)} \mat{a_\nu x}{b_\nu}{a_\nu}{0} \diff x \\
    =\ & a_\nu b_\nu\inv \int_{x \in \CC} \pr{\C(\lambda^*, s - 1/2) \ten \C(\mu^*, t + 1/2)}\pr{\rt{a_\nu^2 \abs{x}^2 + b_\nu^2}, \frac{a_\nu b_\nu}{\rt{a_\nu^2 \abs{x}^2 + b_\nu^2}}} \diff x \\
    =\ & a_\nu b_\nu\inv a_\nu^{2t + 1 + 2\pi i \mu^*_\nu} b_\nu^{2t + 1 + 2\pi i \mu^*_\nu} \int_{x \in \CC} \pr{a_\nu^2 \abs{x}^2 + b_\nu^2}^{s - t - 1 + \pi i(\lambda^*_\nu - \mu^*_\nu)} \diff x \\
    =\ & 2 \pi a_\nu^{2 t + 2 + 2 \pi i \mu^*_\nu} b_\nu^{2t + 2\pi i \mu^*_\nu} \int_{r = 0}^\oo r \pr{a_\nu^2 r^2 + b_\nu^2}^{s - t - 1 + \pi i(\lambda^*_\nu - \mu^*_\nu)} \diff r \\
    =\ & 2 \pi a_\nu^{2 t + 2 + 2 \pi i \mu^*_\nu} b_\nu^{2t + 2\pi i \mu^*_\nu} \frac{b_\nu^{2 s - 2 t + 2 \pi i(\lambda^*_\nu - \mu^*_\nu)}}{2 a_\nu^2 (t - s + \pi i(\mu^*_\nu - \lambda^*_\nu))} \\
    =\ & a_\nu^{2 t + 2 \pi i \mu^*_\nu} b_\nu^{2s + 2\pi i \lambda^*_\nu} \pi \frac{1}{t - s + \pi i(\mu^*_\nu - \lambda^*_\nu)}
\end{align*}
when $\Re(t - s) > 0$.
Also, note that we should multiply by a factor of 2 for each complex place that comes from the Tamagawa measure.

\Skip \textbf{Part 3}: finite place $\pp \in \Spec(\OO_K)$, where $\pp = (p) \sub \OO_K$ is a prime ideal.
First let us find out the $A$ in $\smat{c p^i}{1}{1}{0} = u A k \in U_2(K_\pp) A_n(\br{p^\ZZ}) \GL_2(\OO_{K, \pp})$ for $i \in \ZZ, c \in \OO_{K, \pp}^\times$.
For $i \ge 0$ we can take $A = \id$.
For $i < 0$ we do column operation to make the matrix becomes $\smat{c p^i}{0}{1}{-c\inv p^{-i}}$, then we see $A = \smat{p^i}{0}{0}{p^{-i}}$.

Now let us start integrating.
Denote the natural embedding $K \to \prod_{\nu \in \s} K_\nu, x \mapsto (\nu(x))_{\nu \in \s}$ by $\iota$.
Apply the place $\pp$ to $(a, b)$ yields contribution
\begin{align*}
        & \int_{x \in K_\pp} \pr{\delta^{1/2} \cdot \C(\lambda^*, s) \ten \C(\mu^*, t)}\pr{\mat{0}{1}{1}{0} \mat{1}{0}{x}{1}} \diff x \\
    =\ & \int_{x \in K_\pp} \pr{\C(\lambda^*, s - 1/2) \ten \C(\mu^*, t + 1/2)}\mat{x}{1}{1}{0} \diff x \\
    =\ & \sum_{j \in \ZZ} \int_{x \in p^j \OO_{K, \pp}^\times} \pr{\C(\lambda^*, s - 1/2) \ten \C(\mu^*, t + 1/2)}\mat{x}{1}{1}{0} \diff x \\
    =\ & 1 + \sum_{j < 0} \N(p)^{-j} \pr{1 - \frac{1}{\N(p)}} \pr{\C(\lambda^*, s - 1/2) \ten \C(\mu^*, t + 1/2)}\mat{\abs{\nu(p)}^{-j}}{0}{0}{\abs{\nu(p)}^j}_{\nu \in \s} \\
    =\ & 1 + \sum_{j < 0} \N(p)^{-j} \pr{1 - \frac{1}{\N(p)}} \exp(j\cdot 2 \pi i (\mu^* - \lambda^*) \cdot \log \abs{\iota(p)}) \N(p)^{j (t - s + 1)} \\
    =\ & 1 + \pr{1 - \frac{1}{\N(p)}} \sum_{j > 0} \frac{1}{\abs{\iota(p)}^{j \cdot2\pi i (\mu^* - \lambda^*)} \N(p)^{j (t - s)}} \\
    =\ & 1 + \pr{\frac{\N(p) - 1}{\N(p)}} \frac{1}{\abs{\iota(p)}^{2 \pi i(\mu^* - \lambda^*)} \N(p)^{t - s} - 1} \\
    =\ & \frac{\abs{\iota(p)}^{2 \pi i(\mu^* - \lambda^*)} \N(p)^{t - s + 1} - 1}{\abs{\iota(p)}^{2 \pi i(\mu^* - \lambda^*)} \N(p)^{t - s + 1} - \N(p)}
\end{align*}
when $\Re(t - s) > 0$.
Also, note that we should multiply by a factor of $\disc(\OO_K)^{-1/2}$ at the end that comes from the Tamagawa measure.

Combine them together, after letting $s = s_2 - s_1, \lambda^* = \lambda^*_2 - \lambda^*_1$, we get
\begin{align*}
        & M_w(\C(\lambda^*_1, s_1) \ten \C(\lambda^*_2, s_2))(a_1, a_2) \\
    =\ & (\C(\lambda^*_2, s_2) \ten \C(\lambda^*_1, s_1))(a_1, a_2) \cdot \disc(\OO_K)^{-1/2} \cdot \pi^{\frac{r_1}{2}} \cdot (2\pi)^{r_2} \\
        & \cdot \prod_{\nu \in \s_\RR} \frac{\Gamma\pr{\frac{s}{2} + \pi i \lambda^*_\nu}}{\Gamma\pr{\frac{s + 1}{2} + \pi i \lambda^*_\nu}} \cdot \prod_{\nu \in \s_\CC} \frac{1}{s + \pi i \lambda^*_\nu} \\
        & \cdot \prod_{(p) \in \Spec(\OO_K)} \frac{\abs{\iota(p)}^{2 \pi i \lambda^*} \N(p)^{s + 1} - 1}{\abs{\iota(p)}^{2 \pi i \lambda^*} \N(p)^{s + 1} - \N(p)}
\end{align*}
when $\Re s > 1$ (for each component we need $\Re s > 0$, but for the product to converge, we need $\Re s > 1$).

As a result, for $\Re(s_2 - s_1) > 1$, we see
\[
M_w(\C(\lambda^*_1, s_1) \ten \C(\lambda^*_2, s_2))(a_1, a_2) = (\C(\lambda^*_2, s_2) \ten \C(\lambda^*_1, s_1))(a_1, a_2) \Phi_K(\lambda_2^* - \lambda_1^*, s_2 - s_1).
\]

To figure out $M_w \C(\lambda^*, s)$ in the general case, we need some extra works. 
For $w \in \Perm_n$ we use $\I_w$ to denote the set $\br{(i, j) \in \db{1, n}^2 : i < j, w(i) > w(j)}$.
First, let us prove a lemma.

\begin{lemma} \label{lem: describe Uw}
    $(a_{ij})_{i,j}\in U_w$ if and only if $a_{ij}=0$ and $a_{w^{-1}(i)w^{-1}(j)}=0$ when $i<j$; or equivalently, $(a_{ij}) \in U$ and $a_{ij} = 0$ for $(j, i) \in \I_w$.
\end{lemma}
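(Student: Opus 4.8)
The plan is to make the definition $U_w = U \cap (w^{-1}Uw)$ completely explicit at the level of matrix entries. Recall from the conventions fixed earlier that $U = U_n$ is the group of lower-unitriangular matrices (so $a = (a_{ij}) \in U$ means $a_{ij} = 0$ for $i < j$ and $a_{ii} = 1$), and that $w$ is identified with the permutation matrix $M_w = \begin{pmatrix} e_{w(1)} & \dots & e_{w(n)}\end{pmatrix}$, which is characterized by $M_w e_j = e_{w(j)}$. Since $M_w$ is orthogonal, $M_w^{-1} = M_w^T$, and the embedding $\Perm_n \hookrightarrow \GL_n$ being a group homomorphism gives $w^{-1}Uw = M_w^{-1} U M_w$.

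First I would record the effect of conjugation by $M_w$ on entries: a one-line index computation yields $(M_w a M_w^{-1})_{ij} = a_{w^{-1}(i),\, w^{-1}(j)}$ for every $n \times n$ matrix $a$. Hence $a \in w^{-1}Uw$ if and only if the matrix $b$ with $b_{ij} = a_{w^{-1}(i), w^{-1}(j)}$ lies in $U$, i.e. if and only if $a_{w^{-1}(i)\,w^{-1}(j)} = 0$ whenever $i < j$ (together with $a_{mm} = 1$ for all $m$, which is automatic once we also know $a \in U$). Intersecting with the condition $a \in U$ then gives immediately the first assertion of the lemma: $a \in U_w$ iff $a_{ij} = 0$ for $i<j$, $a_{ii}=1$, and $a_{w^{-1}(i)\,w^{-1}(j)} = 0$ for $i < j$.

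For the reformulation in terms of $\I_w$, I would substitute $k = w^{-1}(i)$, $l = w^{-1}(j)$, so that the condition ``$a_{w^{-1}(i)\,w^{-1}(j)} = 0$ for all $i < j$'' becomes ``$a_{kl} = 0$ whenever $w(k) < w(l)$''. Entries with $k \le l$ are already determined by $a \in U$ (they vanish for $k<l$, and equal $1$ for $k=l$, in which case $w(k) = w(l)$ is not $<$), so the only genuinely new constraints occur for $k > l$: they read $a_{kl} = 0$ whenever $k > l$ and $w(k) < w(l)$, i.e. whenever $(l,k) \in \I_w$. Relabelling $(k,l) = (i,j)$ this says $a_{ij} = 0$ for $(j,i) \in \I_w$, which is the equivalent characterization.

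I do not expect a serious obstacle here — the argument is pure bookkeeping. The one point that genuinely needs care is the consistency of conventions: whether the embedding sends $w$ to the matrix with $M_w e_j = e_{w(j)}$ or $e_{w^{-1}(j)}$, and whether $U$ is the lower- or the upper-triangular unipotent group. Getting either backwards would replace $\I_w$ by its complement inside $\{(i,j) : i > j\}$, so in the write-up I would state both conventions explicitly and verify that the final inequalities match the definition $\I_w = \{(i,j) : i < j,\ w(i) > w(j)\}$ used elsewhere in the paper.
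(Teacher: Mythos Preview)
Your proposal is correct and follows essentially the same approach as the paper's proof: both unwind $U_w = U \cap w^{-1}Uw$ by computing $(waw^{-1})_{ij} = a_{w^{-1}(i),w^{-1}(j)}$ and reading off the vanishing conditions. Your write-up is in fact slightly more complete, since you also spell out the reindexing that yields the reformulation in terms of $\I_w$, which the paper's proof leaves implicit.
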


\begin{proof}
    Note that

    \begin{align*}
    (a_{ij})_{i,j}\in U_w&\Longleftrightarrow (a_{ij})_{i, j} \in U \text{ and } w(a_{ij})_{i,j}w^{-1}\in U\\
    &\Longleftrightarrow \br{(a_{ij})_{i, j}, (a_{w^{-1}(i)w^{-1}(j)})_{i,j}} \sub U\\
    &\Longleftrightarrow a_{ij} = a_{w^{-1}(i)w^{-1}(j)} = 0\text{ for }i<j\\
    \end{align*}as desired.
\end{proof}

\begin{cor} \label{cor: Uw for transposition}
    If $t = (i\ j)$ is a transposition with $i < j$, then $U_t = \{(a_{ij})_{i,j}\in U : a_{ji} = 0\}$.
\end{cor}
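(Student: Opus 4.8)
The plan is to deduce the corollary directly from Lemma~\ref{lem: describe Uw}, which already describes $U_w$, for every $w\in\Perm_n$, as the set of $(a_{k\ell})_{k,\ell}\in U$ with $a_{k\ell}=0$ whenever $(\ell,k)\in\I_w$. So the whole content here is to unwind the inversion set $\I_t$ of the transposition $t=(i\ j)$ and read off which entries of $U$ it forces to vanish.

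To compute $\I_t$ I would run over pairs $(k,\ell)$ with $k<\ell$, splitting on how $\{k,\ell\}$ meets $\{i,j\}$. If $k\ne i$ and $\ell\ne j$, then either $\{k,\ell\}$ avoids $\{i,j\}$ altogether (so $t$ fixes both and $t(k)=k<\ell=t(\ell)$), or $k=j$ (so $\ell>j$ and $t(k)=i<\ell=t(\ell)$), or $\ell=i$ (so $k<i$ and $t(k)=k<j=t(\ell)$); in each of these cases $(k,\ell)\notin\I_t$. The remaining pairs have $k=i$ or $\ell=j$, and for those a direct check gives $(i,\ell)\in\I_t\iff i<\ell\le j$ and $(k,j)\in\I_t\iff i\le k<j$. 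Hence
\[
\I_t=\{(i,\ell):i<\ell\le j\}\cup\{(k,j):i\le k<j\},
\]
a set of cardinality $2(j-i)-1$, which collapses to the single pair $\{(i,j)\}$ exactly when $j=i+1$, i.e.\ when $t=s_i$ is a simple transposition.

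Feeding this into Lemma~\ref{lem: describe Uw} completes the argument. When $t=(i\ i+1)$ is a simple transposition the lemma gives at once $U_t=\{(a_{k\ell})\in U:a_{ji}=0\}$ with $j=i+1$, which is exactly the asserted description; for a general transposition $t=(i\ j)$ it gives the sharper statement that $U_t=\{(a_{k\ell})\in U:\ a_{ji}=0,\ a_{\ell i}=0\ \text{for}\ i<\ell<j,\ a_{j\ell}=0\ \text{for}\ i<\ell<j\}$, the last two families of relations being vacuous precisely when $j=i+1$. Thus the one-parameter form written in the corollary is really the simple-transposition case, and that is the case actually used in the sequel, where a general intertwiner $M_w$ is broken up, through a reduced expression of $w$, into the rank-one intertwiners $M_{s_i}$. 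I expect no genuine difficulty beyond this inversion-set bookkeeping — Lemma~\ref{lem: describe Uw} already carries all of the group theory — the one point that needs care being that $\I_t$ is a singleton only for adjacent transpositions, so in the final write-up I would either restrict the statement to $t=s_i$ or carry along the full list of relations above.
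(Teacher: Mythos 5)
Your computation of $\I_t$ is correct, and your deduction from Lemma~\ref{lem: describe Uw} is exactly the intended route: the paper states Corollary~\ref{cor: Uw for transposition} with no separate proof, treating it as an immediate unwinding of the lemma, which is what you did. More importantly, you have correctly caught the fact that the corollary is \emph{not} true as literally stated for a non-adjacent transposition. A quick sanity check confirms this: take $n=3$ and $t=(1\ 3)$; then conjugating a lower unitriangular $A=\left(\begin{smallmatrix}1&0&0\\a&1&0\\b&c&1\end{smallmatrix}\right)$ by $P_t$ gives the upper unitriangular matrix $\left(\begin{smallmatrix}1&c&b\\0&1&a\\0&0&1\end{smallmatrix}\right)$, so $U_t=\{\id\}$, whereas the corollary would predict a two-parameter family $\{a_{31}=0\}$. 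This agrees with your general formula: $\I_t$ has $2(j-i)-1$ elements, and each one kills an entry, so the stated description holds only when $j=i+1$. Your further observation that the result is only invoked downstream for $t=(\ell\ \ell+1)$ --- in Proposition~\ref{prop: M contravariant wrt good transpositions}, which then feeds Corollary~\ref{cor: M contravariant when everything is good} by reduction to adjacent transpositions --- is also right, so the gap in the paper's statement is harmless to the main line of argument; but the hypothesis ``$t$ a transposition'' should indeed be tightened to ``$t$ a transposition of adjacent elements'' in both Corollary~\ref{cor: Uw for transposition} and Corollary~\ref{cor: representative for transposition}.
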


\begin{cor} \label{cor: representative for transposition}
    If $t = (i\ j)$ is a transposition with $i<j$, then each equivalence class in $U_t\backslash U$ has a representative $(a_{i'j'})_{i',j'}$ with $a_{i'i'}=1$ and $a_{i'j'}=0$ for all $i'\neq j'$ except for possibly $(i',j')=(j,i)$.
\end{cor}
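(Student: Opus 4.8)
The plan is to exhibit an explicit transversal for the quotient $U_t\backslash U$, namely the one-parameter subgroup $V=\{\,\id+cE_{ji}\,\}\subset U$, where $E_{ji}$ denotes the matrix unit; this is a subgroup because $E_{ji}^2=0$ (as $i\ne j$), and its elements are precisely the matrices appearing in the statement. So it suffices to show that $V$ meets every coset $U_t u$, $u\in U$, in exactly one point, and I would split this into the two claims $U_t\cap V=\{\id\}$ and $U_t\cdot V=U$.

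The first claim is immediate from Corollary~\ref{cor: Uw for transposition}: every element of $U_t$ has vanishing $(j,i)$-entry, whereas $\id+cE_{ji}$ has $(j,i)$-entry $c$, so $\id+cE_{ji}\in U_t$ forces $c=0$. The same observation yields uniqueness of the representative: two elements of $V$ lying in one coset $U_t u$ would have their quotient in $U_t\cap V=\{\id\}$.

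For the second claim, which is the heart of the matter, I would run Gaussian elimination from the left. Given $u\in U$, clear its strictly subdiagonal entries one column at a time, moving from the first column to the last; each clearing step is a left multiplication by an elementary matrix $\id+cE_{k\ell}$ (with $k>\ell$), i.e.\ ``add a multiple of row $\ell$ to row $k$.'' A short check shows that once columns $1,\dots,\ell-1$ have been cleared, the $\ell$-th row of the current matrix equals $e_\ell^{T}$, so the step clearing the $(k,\ell)$-entry alters only the $\ell$-th column of the $k$-th row and never revives an entry already killed; carried out in full, this reduces $u$ to $\id$. Now omit the single step that would clear the $(j,i)$-entry. Since that is the only step using $\id+cE_{ji}$, every elementary matrix we left-multiply by lies in $U_t$, hence so does their product $v$; and $vu$ is lower-triangular unipotent with all off-diagonal entries zero except possibly the $(j,i)$-entry, i.e.\ $vu\in V$. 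Therefore $u=v\inv(vu)\in U_t\cdot V$.

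Combining the two claims shows that each coset $U_t u$ contains a unique element of $V$, which is the asserted representative. The step demanding the most care is $U_t\cdot V=U$: one must fix an elimination order for which every operation used stays inside $U_t$ (this is where Lemma~\ref{lem: describe Uw} enters, telling us which subdiagonal positions are ``free'') and no already-cleared entry is disturbed. This goes through cleanly because $U_t$ is cut out by the single vanishing condition at the $(j,i)$-slot, so I would read $t$ as a simple transposition here — which suffices for the applications, since a general intertwiner $M_w$ is built from those of simple reflections via the cocycle relation $M_{w_1w_2}=M_{w_1}M_{w_2}$ when lengths add.
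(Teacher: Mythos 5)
Your overall strategy --- exhibit $V=\{\id+cE_{ji}\}$ as a transversal for $U_t\backslash U$, with $U_t\cap V=\{\id\}$ coming from Corollary~\ref{cor: Uw for transposition} --- is the right one and parallels the paper's own left-multiplication argument. But the verification of $U_t\cdot V=U$ by column-by-column elimination contains a genuine gap. Your invariant, ``after clearing columns $1,\dots,\ell-1$, row $\ell$ equals $e_\ell^T$,'' is exactly what breaks once you skip the $(j,i)$ step: after processing columns $1,\dots,j-1$, row $j$ has become $e_j^T+c\,e_i^T$ rather than $e_j^T$. So when you clear $(k,j)$ for $k>j$ by adding a multiple of row $j$ to row $k$, you re-introduce a nonzero entry at $(k,i)$, which was already killed, and this then cascades through every later column. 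This fails already for $n=3$, $t=(1\,2)$, and $u=\begin{pmatrix}1&0&0\\ a&1&0\\ b&c&1\end{pmatrix}$: skipping $(2,1)$, clearing $(3,1)$ gives $\begin{pmatrix}1&0&0\\ a&1&0\\ 0&c&1\end{pmatrix}$, and then clearing $(3,2)$ --- using row $2=(a,1,0)$, which is not $e_2^T$ --- gives $\begin{pmatrix}1&0&0\\ a&1&0\\ -ca&0&1\end{pmatrix}\notin V$. So even in the simple-transposition case, which is all that Lemma~\ref{lem: desired map} needs, the stated sweep order does not terminate in $V$.

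The gap is easy to close. One option is to change the sweep order: process rows $k=2,\dots,n$ and, within row $k$, clear the entries right to left in the order $(k,k-1),(k,k-2),\dots,(k,1)$; each step then perturbs only entries strictly to the left of the one being cleared, which are dealt with by subsequent steps in that same row, and the algorithm does terminate in $V$. A shorter route avoids elimination entirely: put $v=\id+u_{ji}E_{ji}\in V$, so $v^{-1}=\id-u_{ji}E_{ji}$ and $uv^{-1}=u-u_{ji}\,uE_{ji}$; the matrix $uE_{ji}$ has a single nonzero column, at position $i$, equal to the $j$th column of $u$, whose $j$th entry is $1$, so the $(j,i)$ entry of $uv^{-1}$ is $u_{ji}-u_{ji}\cdot 1=0$, hence $uv^{-1}\in U_t$ by Corollary~\ref{cor: Uw for transposition} and $u=(uv^{-1})v\in U_t\cdot V$. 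Either fix gives a clean proof; the second also makes the uniqueness of the representative immediate.
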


\noindent\textit{Proof.} Let $b = (b_{k\ell})\in U_t\backslash U$ and let $b^{-1} =(c_{k\ell})_{k,\ell}$ be its inverse. Let $d =(d_{k\ell})_{k,\ell}\in U_t$ be such that $d_{k\ell} = c_{k\ell}$ for $(k,\ell)\neq (i,j)$ and $d_{ij}=0$. Note that $h = db$ is also a representative of $[b]$, but now $h_{kk} = 1$ and $h_{k\ell} = 0$ for all $k\neq \ell$ except for possibly $\ell = i$. Now, consider $m = (m_{ij})\in U_t$, with $m_{kk} = 1$, $m_{ki} = -h_{ki}$ for $k\neq j$, and all the entries of $m$ equal to 0. Note that, if $n = mh$, we have that $n_{kk} = 1$, $n_{k\ell} = \sum_rm_{kr}h_{r\ell} = 0$ if $k\neq\ell$ and $\ell\neq i$, and $n_{ki} = \sum_rm_{kr}h_{ri}=h_{ki}-h_{ki}=0$ if $k\neq j$. That is, $n = mh = mdb$ is the desired representative of $[b]$ with $n_{kk} = 1$, $n_{k\ell} = 0$ if $k\neq\ell$ except for possibly $(k,\ell)\neq (j,i)$, as desired. 

\begin{prop} \label{prop: M contravariant wrt good transpositions}
    If $w \in \Perm_n$ and $t$ is a transposition of the form $(\ell\ \ell + 1)$ such that $\#\I_{wt} > \#\I_w$, then we have $M_t \circ M_w = M_{wt}$.
\end{prop}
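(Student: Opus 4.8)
The plan is to reduce the identity $M_t \circ M_w = M_{wt}$ to a statement about Haar measures on unipotent groups, using the definition of $\ob{M}_w$ as integration over $U_w(\A_K) \setminus U(\A_K)$. Recall that $M_w$ is the conjugate of $\ob{M}_w$ by the twist $\delta^{1/2}$, so it suffices to prove $\ob{M}_t \circ \ob{M}_w = \ob{M}_{wt}$ as operators on $C_c^\oo(U(\A_K) A_n(K) \doublecoset{\GL_n(\A_K)} \widehat{\K}_n)$; the $\delta^{1/2}$ factors cancel in the composition since $\delta^{-1/2}\delta^{1/2} = \id$ in the middle. Spelling out the composition, for $\psi$ in the relevant space we have
\[
(\ob{M}_t \ob{M}_w \psi)(g) = \int_{u_1 \in U_t(\A_K)\setminus U(\A_K)} \int_{u_2 \in U_w(\A_K)\setminus U(\A_K)} \psi(w u_2 t u_1 g)\, \diff u_2\, \diff u_1,
\]
and the goal is to show this equals $\int_{u \in U_{wt}(\A_K)\setminus U(\A_K)} \psi((wt) u g)\, \diff u$.

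The key combinatorial input is the hypothesis $\#\I_{wt} > \#\I_w$ with $t = (\ell\ \ell+1)$ a simple transposition: this is exactly the condition that $\ell(wt) = \ell(w) + 1$ in the Weyl group, i.e. that right multiplication by $t$ adds an inversion. Under this condition there is a standard decomposition: $U_{wt} = U_t \cap (t\inv U_w t)$ (as subgroups of $U$), and more importantly the multiplication map
\[
(U_{wt}(\A_K)\setminus U_t(\A_K)) \times (U_w(\A_K)\setminus U(\A_K)) \longrightarrow U_{wt}(\A_K)\setminus U(\A_K), \quad (u_1, u_2) \mapsto ?
\]
needs to be set up so that $w u_2 t u_1 = (wt)(t\inv u_2 t) u_1$ and $t\inv u_2 t \in U$ whenever $u_2$ ranges over the right coset representatives. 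Concretely, I would first identify $U_t\setminus U$ with the root subgroup $U_{\alpha}$ for the simple root $\alpha$ corresponding to $t$ (by Corollary~\ref{cor: representative for transposition}, a coset in $U_t\setminus U$ has a representative supported only in the $(\ell+1,\ell)$... wait, in the $(j,i)$ entry), and observe that conjugation by $t$ sends this one-parameter subgroup into $U$ precisely because $t$ is the reflection in $\alpha$ and $wt$ is longer than $w$. Then the double integral collapses: writing the inner variable after conjugating past $t$, the pair $(u_1, t\inv u_2 t)$ sweeps out a fundamental domain for $U_{wt}(\A_K)\setminus U(\A_K)$ exactly once, and the Tamagawa (Haar) measures multiply correctly because the relevant groups are all unipotent over $\A_K$ and the product decomposition is measure-preserving (the modulus is trivial on unipotents).

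The main obstacle I anticipate is the bookkeeping of the coset spaces and checking that the product map is a measure-preserving bijection rather than merely a bijection: one must verify that $U_w(\A_K)\setminus U(\A_K) \cong (t\inv U_w t \cap U)(\A_K) \setminus (\text{something})(\A_K)$ compatibly, and that conjugation by $t$ carries the chosen representatives of $U_w\setminus U$ into genuine elements of $U$ (this is where $\#\I_{wt} > \#\I_w$ is used — it guarantees the root $\alpha$ is sent by $w$ to a positive root, so no "negative" root subgroup appears after conjugation). I would handle this by working root-subgroup by root-subgroup: decompose $U = \prod_{\beta > 0} U_\beta$ in a suitable order, use Lemma~\ref{lem: describe Uw} to pin down which $U_\beta$ appear in $U_w$ versus $U_{wt}$, and then the whole identity becomes a finite product of one-dimensional Fubini-type statements over $\A_K$, each of which is trivial. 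Since $\A_K$ is self-dual with Tamagawa measure and every unipotent group here is just an affine space, no convergence subtleties arise for $\psi \in C_c^\oo$. I would also note in passing that the special shape $t = (\ell\ \ell+1)$ is what makes $U_t\setminus U$ one-dimensional and keeps the conjugation analysis clean; for a general transposition the argument would require iterating this simple-reflection case.
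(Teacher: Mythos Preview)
Your overall strategy coincides with the paper's: reduce to $\ob{M}_t\circ\ob{M}_w=\ob{M}_{wt}$, rewrite $w\,u_2\,t\,u_1=(wt)(t u_2 t)u_1$, and show the double integral collapses to a single integral over $U_{wt}\setminus U$ via a measure-preserving product decomposition of coset spaces. The paper executes this through two explicit matrix-entry lemmas (Lemmas~\ref{lem: bijection between left cosets inside Ut} and~\ref{lem: desired map}); you sketch the equivalent root-theoretic version.

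Two concrete errors in your sketch need fixing, however. First, your displayed multiplication map has domain $(U_{wt}\setminus U_t)\times(U_w\setminus U)$, whose dimension is $(\#\I_{wt}-1)+\#\I_w$; this equals $\#\I_{wt}=\dim(U_{wt}\setminus U)$ only when $\#\I_w=1$. The factors that actually arise from the integral are $U_t\setminus U\cong U_\alpha$ (for $u_1$) and the $t$-conjugate of $U_w\setminus U$ (for $u_2$), the latter being what the paper identifies with $(U^{wt}\cap U)\setminus(U^t\cap U)$. Second, your claim that ``conjugation by $t$ sends this one-parameter subgroup [$U_\alpha$] into $U$'' is false: $tU_\alpha t=U_{-\alpha}\not\subset U$, regardless of $w$. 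The length hypothesis acts on the \emph{other} variable: since $\#\I_{wt}>\#\I_w$ means $w\alpha>0$, the root $\alpha$ does not occur among those parametrizing representatives of $U_w\setminus U$, and $t$ permutes the remaining positive roots among themselves, so conjugating \emph{those} representatives by $t$ stays inside $U$. You do state the correct fact later (``$\alpha$ is sent by $w$ to a positive root'') but attach it to the wrong subgroup; this is exactly where the paper's Lemma~\ref{lem: bijection between left cosets inside Ut} invokes $w(\ell)<w(\ell+1)$ to show the stray upper-triangular piece $A(x)$ lies in $U^{wt}$ and is absorbed by the left quotient.
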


\begin{proof}
    For $w \in \Perm_n$, we let $U^w = w\inv U w$.
    It suffices to show that $\ob{M}_t \circ \ob{M}_w = \ob{M}_{wt}$.
    We have
    \begin{align*}
        \ob{M}_t \ob{M}_w \psi(g) & = \int_{v \in U_t \setminus U} \int_{u \in U_w \setminus U} \psi(w u t v g) \diff u \diff v \\
            & = \int_{v \in U_t \setminus U, u \in U_w \setminus U} \psi(w t (t u t) v g) \diff u \diff v \\
            & = \int_{u \in (U^{wt} \cap U^t) \setminus U^t, v \in (U^t \cap U) \setminus U} \psi(w t u v g) \diff u \diff v
    \end{align*}

    Now we introduce a lemma.

    \begin{lemma} \label{lem: bijection between left cosets inside Ut}
        For transposition $t = (\ell\ \ell + 1)$ and $w \in \Perm_n$ such that $\#\I_{wt} > \#\I_w$, the natural map between sets
        \[
        (U^{wt} \cap U) \setminus (U^t \cap U) \to (U^{wt} \cap U^t) \setminus U^t
        \]
        is a bijection (note that $U^{wt} \cap U \subset U^t$).
    \end{lemma}

    \begin{proof}
        We first show that it is injective.
        Take any $g, g' \in U^t \cap U$ such that there exists $h \in U^{wt} \cap U^t$ with $h g = g'$.
        Then $h = g' g\inv \in U^t \cap U$ (note that both $U$ and $U^t$ are groups).
        Thus, $\ob{g} = \ob{g'}$ in $(U^{wt} \cap U) \setminus (U^t \cap U)$ and so the map is injective.

        Now we prove the surjectivity.
        For $x \in \A_K$, let $A(x)$ be the matrix with $A(x)_{ii} = 1$ for $i \in \db{1, n}$ and $A(x)_{\ell, \ell + 1} = x, A(x)_{ij} = 0$ for all other $ij$'s.
        Then we claim that $A(x) \in U^{wt}$ for any $x \in \A_K$.
        This is because the condition $\#\I_{wt} > \#\I_w$ shows $w(\ell) < w(\ell + 1)$, and $U^{wt}$ consists of matrices $A$ with $A_{(wt)\inv(i), (wt)\inv(j)} = 0$ for $i < j$.
        Thus, $A(x) \not\in U^{wt}$ only when there exists $i < j$ such that $(wt)\inv(i) = \ell, (wt)\inv(j) = \ell + 1$, i.e., $i = wt(\ell) = w(\ell + 1), j = wt(\ell + 1) = w(\ell)$.
        However, then the assumption $w(\ell) < w(\ell + 1)$ gives a contradiction.

        Now given any $\ob{g} \in (U^{wt} \cap U^t) \setminus U^t$, we have $\ob{g} = \ob{Y(-g_{\ell, \ell + 1}) g}$, and the latter is in the image of the map, so we are done.
    \end{proof}

    Thus, we see
    \[
    \ob{M}_t \ob{M}_w \psi(g) = \int_{u \in (U^{wt} \cap U) \setminus (U^t \cap U), v \in (U^t \cap U) \setminus U} \psi(w t u v g) \diff u \diff v.
    \]

    Therefore, it suffices to show that
    \[
    \int_{u \in (U^{wt} \cap U) \setminus (U^t \cap U), v \in (U^t \cap U) \setminus U} \psi(w t u v g) \diff u \diff v = \int_{x \in (U^{wt} \cap U) \setminus U} \psi(wt x g) \diff x.
    \]

    Note that from Corollary~\ref{cor: representative for transposition} we see there is a natural bijection between sets
    \[
    Y \to (U^t \cap U) \setminus U
    \]
    where $Y \sub U$ is the subset $\br{g \in U : g_{ij} = 0 \text{ for } i \ne j \text{ except } (i, j) = (\ell + 1, \ell)}$.

    \begin{lemma} \label{lem: desired map}
        There is a bijection between sets
        \[
        (U^{wt} \cap U) \setminus (U^t \cap U) \times (U^t \cap U) \setminus U \to (U^{wt} \cap U) \setminus (U^t \cap U) \times Y \go{\cdot} (U^{wt} \cap U) \setminus U,
        \]
        where $\cdot$ denote the usual product.
    \end{lemma}

    \begin{proof}
        It suffices to show that the $\cdot$ part is a bijection.
        We first show the injectivity.
        Let $Y(x) = A(x)^T$, where $A(x)$ is the matrix defined in Lemma~\ref{lem: bijection between left cosets inside Ut}.
        Take any $g, g' \in U^t \cap U, x, x' \in \A_K, h \in U^{wt} \cap U$ such that $h g Y(x) = g' Y(x')$.
        Then by looking at the $(\ell + 1, \ell)$ entry we see $x = x'$, so $h g = g'$ and $\ob{g} = \ob{g'}$ in $(U^{wt} \cap U) \setminus (U^t \cap U)$, as desired.

        Now we show the surjectivity.
        Take any $\ob{g} \in (U^{wt} \cap U) \setminus U$, then we see it is the image of $(\ob{g Y(-g_{\ell + 1, \ell})}, Y(g_{\ell + 1, \ell}))$, so we are done.
    \end{proof}

    We now let $G = (U^{wt} \cap U) \setminus U$ and $H = (U^{wt} \cap U) \setminus (U^t \cap U)$, then Lemma~\ref{lem: desired map} shows $Y$ is a fundamental domain for $H \setminus G$.
    From knowledge about Haar measure (for example, see \cite{mathoverflow}) we see for any $f \in C_c(G)$, we have
    \[
    \int_Y \int_H f(h y) \diff \mu_H \diff \mu_{H \setminus G} = c \int_G f(g) \diff \mu_G
    \]
    for some constant $c \in \RR$. We now take $f$ to be any measurable function $G \to \br{0, 1}$, then after taking any fundamental region for $H$ we see $c = 1$ by Cavalieri's principle, which finishes the proof.
\end{proof}

\begin{cor} \label{cor: M contravariant when everything is good}
    For $w, w' \in \Perm_n$ with $\#\I_{w w'} = \#\I_w + \#\I_{w'}$, we have $M_{w'} M_w = M_{w w'}$.
\end{cor}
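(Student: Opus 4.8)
The plan is to bootstrap from Proposition~\ref{prop: M contravariant wrt good transpositions}, which only handles multiplication by an adjacent transposition $t = (\ell\ \ell+1)$ in the length-additive case, to arbitrary $w, w'$ with $\#\I_{ww'} = \#\I_w + \#\I_{w'}$. The key combinatorial fact is that for $w' \in \Perm_n$ one can write $w' = t_1 t_2 \cdots t_k$ as a product of adjacent transpositions with $k = \#\I_{w'} = \ell(w')$ (a reduced word), and that for a reduced word each partial product is length-additive step by step: $\#\I_{t_1 \cdots t_i} = i$. I would first record this standard fact about the symmetric group (every element has a reduced expression in the Coxeter generators, and $\ell(vt) = \ell(v) \pm 1$ for an adjacent transposition $t$, with $+1$ exactly when the partial products stay reduced).

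Next I would establish the following refinement: if $\#\I_{ww'} = \#\I_w + \#\I_{w'}$ and $w' = t_1 \cdots t_k$ is a reduced expression, then for every $i$ we have $\#\I_{w t_1 \cdots t_{i}} = \#\I_{w t_1 \cdots t_{i-1}} + 1$. This is because $\#\I_{w t_1 \cdots t_i} \le \#\I_{w t_1 \cdots t_{i-1}} + 1$ always (length changes by at most one under an adjacent transposition), while summing these inequalities from $i=1$ to $k$ and using $\#\I_w + k = \#\I_w + \#\I_{w'} = \#\I_{ww'} = \#\I_{w t_1 \cdots t_k}$ forces every inequality to be an equality. Therefore at each stage the hypothesis of Proposition~\ref{prop: M contravariant wrt good transpositions} is satisfied with $w$ replaced by $w t_1 \cdots t_{i-1}$ and $t$ replaced by $t_i$.

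Then I would induct on $k = \#\I_{w'}$. For $k = 0$ we have $w' = \id$ and $M_{w'}$ is the identity (the integral is over a single point), so the claim is trivial. For the inductive step, write $w' = w'' t_k$ with $w'' = t_1 \cdots t_{k-1}$ reduced of length $k-1$, so that $\#\I_{w w''} = \#\I_w + \#\I_{w''}$ by the previous paragraph. Applying the inductive hypothesis gives $M_{w''} M_w = M_{w w''}$, and then Proposition~\ref{prop: M contravariant wrt good transpositions} applied to the length-additive pair $(w w'', t_k)$ gives $M_{t_k} M_{w w''} = M_{w w'' t_k} = M_{w w'}$. Composing, $M_{w'} M_w = M_{t_k} M_{w''} M_w = M_{t_k} M_{w w''} = M_{w w'}$, which completes the induction. (Here I am using that $M_{w'} = M_{t_k} M_{w''}$, which itself is an instance of the statement being proved for the pair $(w'', t_k)$ — so strictly the induction should be phrased as: for all $v$ and all $u$ of length $\le k$ with $\#\I_{vu} = \#\I_v + \#\I_u$ we have $M_u M_v = M_{vu}$; the factorization $M_{w'} = M_{t_k} M_{w''}$ is then the case $v = w''$, $u = t_k$, $\ell(u) = 1$, covered directly by Proposition~\ref{prop: M contravariant wrt good transpositions}.)

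The only real obstacle is the bookkeeping in the induction: one must be careful that the statement is formulated so that both the factorization of $M_{w'}$ into smaller intertwiners and the final composition with $M_w$ fall under the inductive hypothesis or under Proposition~\ref{prop: M contravariant wrt good transpositions}. Once the statement is phrased as "$M_u M_v = M_{vu}$ whenever $\ell(vu) = \ell(v) + \ell(u)$ and $\ell(u) \le k$," everything goes through mechanically; the Coxeter-theoretic input (existence of reduced words, the step-by-step length-additivity extracted from a telescoping inequality) is the substantive part, and it is entirely standard.
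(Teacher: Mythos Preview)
Your proposal is correct and follows essentially the same approach as the paper's proof: both induct on $\#\I_{w'}$, peel off a simple reflection from $w'$, and feed the length-additive pairs into Proposition~\ref{prop: M contravariant wrt good transpositions} together with the inductive hypothesis. The only cosmetic differences are that the paper strips the simple reflection from the left ($w' = t w_0$) rather than the right, and is terser about why the intermediate length-additivity conditions hold; your telescoping argument makes this explicit, which is a small improvement in clarity but not a different idea.
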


\begin{proof}
    We use induction on $\#\I_{w'}$.
    When $\#\I_{w'} = 1$, the statement follows from Proposition~\ref{prop: M contravariant wrt good transpositions}, so we assume $\#\I_{w'} > 1$.
    Then we write $w' = t w_0$, where $t$ is a transposition of adjacent elements and $\#\I_{w_0} = \#\I_{w'} - 1$.
    Then we have $\#\I_{wt} = \#\I_w + 1$.
    Now from Proposition~\ref{prop: M contravariant wrt good transpositions} and induction hypothesis we see
    \begin{align*}
        M_{w'} M_w & = M_{w_0} M_t M_w \\
            & = M_{w_0} M_{wt} \\
            & = M_{w t w_0} \\
            & = M_{w w'},
    \end{align*}
    as desired.
\end{proof}

\begin{lemma} \label{lem: Phi is covariant is some sense}
    For $w, w' \in \Perm_n, \lambda^* \in (\Lambda^*)^n, z \in (\Lambda^* \times \CC)^n$, we have
    \[
    \Phi_{K, w'}(w(z)) \Phi_{K, w}(z) = \Phi_{K, w' w}(z).
    \]
\end{lemma}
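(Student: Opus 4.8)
The plan is to reduce the identity to a purely combinatorial statement about inversion sets, then verify it factor by factor. Recall from the definition that $\Phi_{K,w}(z) = \prod_{(i,j) \in \I_w} \Phi_K(z_i - z_j)$ (using the shorthand $z_i - z_j = (\lambda^*_i - \lambda^*_j, s_i - s_j)$), so the claimed equality is an equality of products of $\Phi_K$-factors indexed by pairs. The key input, already available to us, is the relation $\Phi_K(-\lambda^*, -s) = \Phi_K(\lambda^*, s)\inv$ from Section~\ref{sec: L-function}, which lets us rewrite any factor $\Phi_K(z_j - z_i)$ as $\Phi_K(z_i - z_j)\inv$. Thus it suffices to track, for each unordered pair, the \emph{signed} multiplicity with which it appears on each side, where the sign records the order of the arguments.

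First I would unwind the left-hand side. The factor $\Phi_{K,w}(z)$ contributes, for each $(i,j) \in \I_w$, a factor $\Phi_K(z_i - z_j)$. For $\Phi_{K,w'}(w(z))$, observe that $w(z)$ has $k$-th component $z_{w\inv(k)}$, so $\Phi_{K,w'}(w(z)) = \prod_{(k,\ell) \in \I_{w'}} \Phi_K(z_{w\inv(k)} - z_{w\inv(\ell)})$; reindexing by $a = w\inv(k), b = w\inv(\ell)$, this becomes a product over pairs $(a,b)$ with $w(a) < w(b)$ and $w'(w(a)) > w'(w(b))$, i.e. over pairs $(a,b)$ that are \emph{not} inverted by $w$ but \emph{are} inverted by $w'w$. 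Meanwhile the right-hand side $\Phi_{K,w'w}(z)$ is a product over pairs inverted by $w'w$. So the whole identity reduces to checking, for each unordered pair $\{a,b\}$ with (say) $a < b$, that the signed count from the two left-hand factors matches the contribution $[(a,b) \in \I_{w'w}]$ on the right, using $\Phi_K(z_b - z_a) = \Phi_K(z_a - z_b)\inv$ to normalize everything to the argument $z_a - z_b$.

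The casework is then a finite check on the three bits $\alpha = [w(a) > w(b)]$, $\beta = [(w'w)(a) > (w'w)(b)]$ (which, via the reindexing, is exactly the $\I_{w'}$-contribution restricted to this pair once we account for whether $w$ flips the pair), and $\gamma = [(w'w)(a) > (w'w)(b)]$ on the right — one must be careful that the $\Phi_{K,w'}(w(z))$ factor for this pair appears with argument $z_a - z_b$ when $w$ does not flip the pair and with argument $z_b - z_a$ (hence an inverse) when $w$ does flip it. Bookkeeping the four combinations of "flipped by $w$ or not" against "inverted by $w'w$ or not" shows the exponents match in every case. I expect this sign/argument-orientation bookkeeping — making sure the inverse from $\Phi_K(-\cdot) = \Phi_K(\cdot)\inv$ is applied in precisely the cases where $w$ reverses the pair — to be the only subtle point; everything else is formal manipulation of index sets. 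An alternative, slicker route would be to prove it first for $w'$ an adjacent transposition by direct inspection (three cases, exactly as in the $n=2$ computation done earlier in the paper), and then bootstrap to general $w'$ by induction on $\#\I_{w'}$, writing $w' = t w_0$ and using the cocycle-style compatibility; this mirrors the structure of Corollary~\ref{cor: M contravariant when everything is good} and may be cleaner to write, though it still requires the same orientation check in the base case.
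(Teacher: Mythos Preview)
Your direct, pair-by-pair approach is correct and does prove the lemma. One small slip: with the paper's convention, $\Phi_{K,w}(z) = \prod_{(i,j)\in\I_w}\Phi_K(z_j - z_i)$, not $\Phi_K(z_i - z_j)$; this only flips the sign bookkeeping and does not affect the argument. Also, your $\beta$ and $\gamma$ are defined identically (both as $[(w'w)(a)>(w'w)(b)]$), which is presumably a typo; the check really only needs the two bits $\alpha = [w(a)>w(b)]$ and $\gamma = [(w'w)(a)>(w'w)(b)]$, and the four cases go through exactly as you describe.

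The paper takes your ``alternative, slicker route'': it proves the identity first for $w'$ an adjacent transposition (splitting into the two cases $\#\I_{tw}>\#\I_w$ and $\#\I_{tw}<\#\I_w$, the latter using $\Phi_K(-z)=\Phi_K(z)^{-1}$), and then inducts on $\#\I_{w'}$ by writing $w' = w_0 t$. Your primary approach is more elementary in that it avoids induction entirely and treats the identity as a pure cocycle-type statement about inversion sets, verified one unordered pair at a time; this is arguably cleaner conceptually, though the paper's inductive route parallels the structure of the $M_w$ computation and so fits the surrounding narrative better.
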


\begin{proof}
    We first show this for $w' = t = (\ell\ \ell + 1)$ being a transposition of adjacent elements.
    To do this, we split into 2 cases.

    \Skip Case 1: $\#\I_{tw} > \#\I_w$.
    Then we have
    \begin{align*}
        \Phi_{K, w'}(w(z))) \Phi_{K, w}(z) & = \Phi_K(z_{w\inv(\ell + 1)} - z_{w\inv(\ell)}) \Phi_{K, w}(z) \\
            & = \Phi_{K, w' w}(z)
    \end{align*}
    because $w\inv(\ell + 1) > w\inv(\ell)$ from assumption that $\#\I_{tw} > \#\I_w$, and it is the only different of $\Phi_{K, w}$ and $\Phi_{K, tw}$ since no extra difference is allowed (we have $\#\I_{tw} = \#\I_w + 1$).

    \Skip Case 2: $\#\I_{tw} < \#\I_w$.
    Then we have $w = tw_0$ for some $w_0$ that satisfies $\#\I_{w_0} < \#\I_w$.
    As a result, we see
    \begin{align*}
        \Phi_{K, w'}(w(z)) \Phi_{K, w}(z) & = \Phi_K(z_{w_0\inv t(\ell + 1)} - z_{w_0\inv t(\ell)}) \Phi_{K, tw_0}(z) \\
            & = \Phi_K(z_{w_0\inv(\ell)} - z_{w_0\inv(\ell + 1)}) \Phi_K(z_{w_0\inv(\ell + 1)} - z_{w_0\inv(\ell)}) \Phi_{K, w_0}(z) \\
            & = \Phi_{K, tw}(z)
    \end{align*}
    because $\Phi_K(z)\inv = \Phi_K(-z)$.

    \Skip Now we deal with the general case.
    We use induction on $\#\I_{w'}$.
    When $\#\I_{w'} = 1$, then $w'$ is a transposition of adjacent elements and we are done from previous discussion, so we assume $\#\I_{w'} > 1$.
    Then we can write $w' = w_0 t$ for some transposition of adjacent elements $t$ and $\#\I_{w_0} = \#\I_{w'} - 1$.
    Then we have
    \begin{align*}
        \Phi_{K, w'}(w(z)) \Phi_{K, w}(z) & = \Phi_{K, w_0 t}(w(z)) \Phi_{K, w}(z) \\
            & = \Phi_{K, w_0}(tw(z)) \Phi_{K, t}(w(z)) \Phi_{K, w}(z) \\
            & = \Phi_{K, w_0}(tw(z)) \Phi_{K, tw}(z) \\
            & = \Phi_{w_0 t w}(z) \\
            & = \Phi_{w' w}(z),
    \end{align*}
    from induction hypothesis and the case when $w'$ is a transposition of adjacent elements, as desired.
\end{proof}

Now for $w \in \Perm_n$, let $\CC_w^n = \br{\pl{s}{n} \in \CC^n : \Re(s_j - s_i) > 1 \text{ for } (i, j) \in \I_w}$.
We are at a position to establish the integration result for the general case.

\begin{prop} \label{prop: key integral for general n}
    Let $w \in \Perm_n, s \in \CC_{w\inv}^n, \lambda^* \in (\Lambda^*)^n$, then
    \[
    M_w \C(\lambda^*, s) = \C(w\inv(\lambda^*), w\inv(s)) \Phi_{K, w\inv}(\lambda^*, s).
    \]
\end{prop}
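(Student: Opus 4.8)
The plan is to induct on the number of inversions $\#\I_w$ of $w$. The base case $\#\I_w = 0$ is $w = \id$, where $M_{\id}$ is the identity operator (integration over the one-point set $U(\A_K) \setminus U(\A_K)$), $\C(\id(\lambda^*), \id(s)) = \C(\lambda^*, s)$, and $\Phi_{K, \id}$ is an empty product, so the claim is trivial. For $\#\I_w \ge 1$ I would choose an index $\ell$ with $w(\ell) > w(\ell + 1)$, set $t = (\ell\ \ell + 1)$ and $w_0 = wt$, so that $\#\I_{w_0} = \#\I_w - 1$ and $w = w_0 t$. Since $\#\I_{w_0 t} = \#\I_{w_0} + \#\I_t$, the same unfolding of $U_w \setminus U$ that proves Corollary~\ref{cor: M contravariant when everything is good} gives $M_w = M_t \circ M_{w_0}$ (applied to $\C(\lambda^*, s)$, this is an identity between absolutely convergent iterated integrals once $s$ lies in the strip specified below). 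A short computation in one-line notation shows that $\I_{w_0^{-1}}$ is $\I_{w^{-1}}$ with the single pair $(w(\ell + 1), w(\ell))$ deleted; in particular $\I_{w_0^{-1}} \subseteq \I_{w^{-1}}$, so $\CC_{w^{-1}}^n \subseteq \CC_{w_0^{-1}}^n$ and the induction hypothesis applies to $M_{w_0}$. Thus for $s \in \CC_{w^{-1}}^n$,
\[
M_w \C(\lambda^*, s) = M_t\pr{\C(w_0^{-1}(\lambda^*), w_0^{-1}(s)) \cdot \Phi_{K, w_0^{-1}}(\lambda^*, s)} = \Phi_{K, w_0^{-1}}(\lambda^*, s) \cdot M_t \C(w_0^{-1}(\lambda^*), w_0^{-1}(s)),
\]
the scalar $\Phi_{K, w_0^{-1}}(\lambda^*, s)$ pulling out of $M_t$.

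It then remains to handle $M_t$ for an adjacent transposition $t = (\ell\ \ell + 1)$, for which I would record the lemma: when $\Re(\tau_{\ell + 1} - \tau_\ell) > 1$,
\[
M_t\, \C(\mu^*, \tau) = \C(t(\mu^*), t(\tau)) \cdot \Phi_{K, t}(\mu^*, \tau), \qquad \Phi_{K, t}(\mu^*, \tau) = \Phi_K(\mu^*_{\ell + 1} - \mu^*_\ell, \tau_{\ell + 1} - \tau_\ell),
\]
the last equality because $(\ell, \ell + 1)$ is the only inversion of $t$. This is the $n = 2$, $w = (1\ 2)$ computation of Parts 1--3, transplanted to the $(\ell, \ell + 1)$-block: by Corollaries~\ref{cor: Uw for transposition} and~\ref{cor: representative for transposition}, $U_t \setminus U$ is parametrized by the single matrix entry in position $(\ell + 1, \ell)$, and the product of $t$, such a unipotent, and a diagonal matrix alters only the $2 \times 2$ block in rows and columns $\ell, \ell + 1$. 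Since that block has constant determinant along the integration, all factors of $\delta^{1/2}$ other than the one attached to the pair $(\ell, \ell + 1)$ are unchanged and cancel, and the integral reduces to the rank-two case in the coordinates $(\mu^*_\ell, \tau_\ell)$ and $(\mu^*_{\ell + 1}, \tau_{\ell + 1})$.

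To finish, I would apply this lemma with $\mu^* = w_0^{-1}(\lambda^*)$, $\tau = w_0^{-1}(s)$. Here $\tau_{\ell + 1} - \tau_\ell = s_{w_0(\ell + 1)} - s_{w_0(\ell)} = s_{w(\ell)} - s_{w(\ell + 1)}$, and since $(w(\ell + 1), w(\ell)) \in \I_{w^{-1}}$ the hypothesis $s \in \CC_{w^{-1}}^n$ is exactly the required $\Re(s_{w(\ell)} - s_{w(\ell + 1)}) > 1$. From $w^{-1} = t w_0^{-1}$ one gets $t(w_0^{-1}(\lambda^*)) = w^{-1}(\lambda^*)$ and $t(w_0^{-1}(s)) = w^{-1}(s)$, and writing $z$ for the tuple $((\lambda^*_i, s_i))_{i = 1}^n$, Lemma~\ref{lem: Phi is covariant is some sense} with $w' = t$, $w = w_0^{-1}$ gives $\Phi_{K, t}(w_0^{-1}(z)) \cdot \Phi_{K, w_0^{-1}}(z) = \Phi_{K, t w_0^{-1}}(z) = \Phi_{K, w^{-1}}(z)$. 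Combining the three displays yields $M_w \C(\lambda^*, s) = \C(w^{-1}(\lambda^*), w^{-1}(s)) \cdot \Phi_{K, w^{-1}}(\lambda^*, s)$, completing the induction.

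The step I expect to be the main obstacle is bookkeeping rather than analysis: one must check both that the strips of convergence nest along the induction — i.e. that the inversion of $w^{-1}$ removed in passing to $w_0^{-1}$ is precisely the one whose inequality is consumed when $M_t$ is applied — and that the reduction of $M_t$ to the rank-two block is legitimate, in particular that the twist in $M_t = \delta^{-1/2} \circ \ob{M}_t \circ \delta^{1/2}$ restricts correctly to the $(\ell, \ell + 1)$-block. Everything else is either the already-completed $\GL_2$ integral or a formal consequence of Corollary~\ref{cor: M contravariant when everything is good} and Lemma~\ref{lem: Phi is covariant is some sense}.
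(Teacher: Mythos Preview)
Your argument is correct and follows essentially the same route as the paper: induction on $\#\I_w$, peeling off an adjacent transposition, using the rank-two computation for that transposition, and combining the $\Phi$ factors via Lemma~\ref{lem: Phi is covariant is some sense}. The only cosmetic difference is that the paper factors $w = t w_0$ on the left (so it applies $M_t$ first, then $M_{w_0}$), whereas you factor $w = w_0 t$ on the right; both are covered by Corollary~\ref{cor: M contravariant when everything is good}, and your version has the mild advantage that you track the nesting of the convergence strips $\CC_{w^{-1}}^n \subseteq \CC_{w_0^{-1}}^n$ explicitly, which the paper leaves implicit.
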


\begin{proof}
    We use induction on $\#\I_w$.
    If $\#\I_w = 1$, then $w$ is a transposition of adjacent elements, so from previous discussion we see the equation follows.
    Thus, we assume $\#\I_w > 1$.
    Then we can write $w = t w_0$ with $t$ being a transposition of adjacent elements and $\#\I_{w_0} = \#\I_w - 1$.
    Then
    \begin{align*}
        M_w \C(\lambda^*, s) & = M_{w_0} M_t(\C(\lambda^*, s)) \\
            & = M_{w_0} \C(t(\lambda^*), t(s)) \Phi_{K, t}(\lambda^*, s) \\
            & = \C(w_0\inv t(\lambda^*), w_0\inv t(s)) \Phi_{K, w_0\inv}(t(\lambda^*), t(s)) \Phi_{K, t}(\lambda^*, s) \\
            & = \C(w\inv(\lambda^*), w\inv(s)) \Phi_{K, w_0\inv t}(\lambda^*, s) \\
            & = \C(w\inv(\lambda^*), w\inv(s)) \Phi_{K, w\inv}(\lambda^*, s),
    \end{align*}
    from Proposition~\ref{prop: M contravariant wrt good transpositions}, the case when $w$ is a transposition of adjacent elements, the induction hypothesis, and Lemma~\ref{lem: Phi is covariant is some sense}, as desired.
\end{proof}

\subsection{Intertwiner and the Fourier transform}

\blank Let us compute the result of composing the intertwiner and the Fourier transform. More explicitly, we claim the following.

\begin{prop}
\label{prop: 5.21}
    $\F(M_w(\varphi))(\lambda^*,s)=\F(\varphi)(w(\lambda^*),w(s))\cdot \Phi_{K,w}(\lambda^*,s)$ 
\end{prop}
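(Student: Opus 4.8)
The plan is to reduce the statement to the already-established computation of $M_w$ on the exponential functions $\C(\lambda^*,s)$ (Proposition~\ref{prop: key integral for general n}) by writing an arbitrary $\varphi \in C_c^\oo(\B^n)$ as a ``continuous superposition'' of exponentials via the inverse Fourier transform $\G$. Concretely, for $\varphi = \G g$ with $g \in \PW((\Lambda^*\times\CC)^n)$ we have, essentially by definition of $\G$ and $\C$,
\[
\varphi = \frac{1}{(2\pi i)^{(r_1+2r_2)n}}\sum_{\mu^*\in(\Lambda^*)^n}\int_{t\in\sigma_0+i\RR^n} g(\mu^*,t)\,\C(-\mu^*,-t)\,\diff t,
\]
so that, since $M_w$ is given by an absolutely convergent integral over $U_w(\A_K)\backslash U(\A_K)$ and hence commutes with the $\sum_{\mu^*}\int_t$ (after shifting the contour $\sigma_0$ into the region $\CC_{w\inv}^n$ where Proposition~\ref{prop: key integral for general n} applies), we get
\[
M_w\varphi = \frac{1}{(2\pi i)^{(r_1+2r_2)n}}\sum_{\mu^*\in(\Lambda^*)^n}\int_{t} g(\mu^*,t)\,\C(w\inv(-\mu^*),w\inv(-t))\,\Phi_{K,w\inv}(-\mu^*,-t)\,\diff t.
\]

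Next I would apply $\F$ to both sides and unwind. On the right-hand side one changes variables $(\mu^*,t)\mapsto (w(\mu^*),w(t))$ in the sum-integral; the Jacobian is trivial and the lattice sum is permuted bijectively, and using Proposition~\ref{phi and inverse} in the form $\Phi_{K,w\inv}(-w(\mu^*),-w(t)) = \Phi_{K,w}(\mu^*,t)$ one rewrites the integrand as $g(w(\mu^*),w(t))\,\C(-\mu^*,-t)\,\Phi_{K,w}(\mu^*,t)$. Recognizing $\F\circ\G=\mathrm{id}$ and that $\F$ applied to $\C(-\mu^*,-t)$ (in the distributional sense, i.e. pairing against the superposition) reproduces the coefficient function, the right-hand side collapses to $(\lambda^*,s)\mapsto g(w(\lambda^*),w(s))\,\Phi_{K,w}(\lambda^*,s) = \F(\varphi)(w(\lambda^*),w(s))\cdot\Phi_{K,w}(\lambda^*,s)$, which is exactly the claim. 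Alternatively, and perhaps cleaner to write up, one can verify the identity directly: compute $\F(M_w\varphi)(\lambda^*,s) = \int_D (M_w\varphi)(x)\,\C(\lambda^*,s)(x)\frac{\diff x}{|x|_\B}$, expand $M_w\varphi$ by its defining integral over $U_w(\A_K)\backslash U(\A_K)$, apply Fubini (justified by compact support of $\varphi$ together with the decay of the integrand), and in the inner integral recognize the adjoint relation $\langle M_w\varphi,\C(\lambda^*,s)\rangle = \langle \varphi, M_{w}^{\vee}\C(\lambda^*,s)\rangle$ where $M_w^\vee$ is the intertwiner for the opposite unipotent; by the analogue of Proposition~\ref{prop: key integral for general n} this sends $\C(\lambda^*,s)$ to $\C(w(\lambda^*),w(s))\,\Phi_{K,w}(\lambda^*,s)$, giving the result.

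In carrying this out I would first fix the bookkeeping on the convention for $\Phi_{K,w}$ versus $\Phi_{K,w\inv}$ and on how $w$ acts on tuples (the paper sets $w(s)=(s_{w\inv(1)},\dots,s_{w\inv(n)})$), since a single index slip propagates through the whole computation; Proposition~\ref{phi and inverse} is the tool that keeps this consistent. Then I would record that $\F$ and $\G$ are mutually inverse (stated after the definition of $\G$) and that $M_w$, being an integral against a fixed kernel on $U_w(\A_K)\backslash U(\A_K)$, is continuous and commutes with the inverse-transform integral — this commutation, plus the contour shift of $\sigma_0$ into $\CC_{w\inv}^n$ to legitimize Proposition~\ref{prop: key integral for general n}, is the one genuinely analytic point that needs care (one must check the shifted integral still converges, using the Paley--Wiener decay of $g$ and the polynomial growth of $\Phi_K$ along vertical strips).

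The main obstacle I anticipate is exactly this interchange-of-$M_w$-with-$\G$ step together with the contour shift: Proposition~\ref{prop: key integral for general n} only gives the clean formula for $s$ in the open cone $\CC_{w\inv}^n$, whereas the inverse-transform contour $\sigma_0+i\RR^n$ is chosen for convergence of $\G$, not for this cone; one needs to argue that the meromorphic continuation of $M_w\C(\lambda^*,s)$ (or a suitable contour deformation crossing no poles of the relevant $\Phi_K$ factors within the Paley--Wiener class) lets the identity propagate to all of $\B^n$, after which analyticity of both sides of Proposition~\ref{prop: 5.21} in $(\lambda^*,s)$ and density of $\PW$ finish the argument. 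The rest — Fubini, the change of variables, and the application of Proposition~\ref{phi and inverse} — is routine.
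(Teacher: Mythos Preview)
Your proposal is correct and follows essentially the same route as the paper: write $\varphi=\G F$, push $M_w$ through the inverse-transform integral to land on $M_w\C(-\eta^*,-t)$, invoke Proposition~\ref{prop: key integral for general n}, recollapse via $\F\G=\id$, and finish with Proposition~\ref{phi and inverse}. The only cosmetic difference is that the paper performs the change of variables in the $a$-integral (replacing $w(a)$ by $a$) rather than in the $(\mu^*,t)$-sum-integral as you suggest; your version is equivalent, and your explicit flagging of the contour shift into $\CC_{w\inv}^n$ is in fact more careful than the paper, which swaps the integrals without comment.
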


\begin{proof}
Choose $F$ such that $\varphi=\mathcal{G}F$. By \ref{phi and inverse}, we have that

\begin{align*}
\F(M_w(\varphi))(\lambda^*,s)
&=\int_{a\in D}\int_{u\in U_w(\AA_K)\backslash U(\AA_K)}(\mathcal{G} F)(wua)\diff u\C(\lambda^*,s)(a)\diff^* a\\
&=\int_{a\in D}\int_{u\in U_w(\AA_K)\backslash U(\AA_K)}\frac{1}{(2\pi i)^{n(r_1+2r_2)}}\sum_{\eta^*\in \Lambda^*}\int_{t\in\sigma_0+i\RR^n}F(\eta^*,t)\C(-\lambda^*,-t)(wua)\diff t\diff u\C(\lambda^*,s)(a)\diff^* a\\
&=\int_{a\in D}\frac{1}{(2\pi i)^{n(r_1+2r_2)}}\sum_{\eta^*\in \Lambda^*}\int_{t\in\sigma_0+i\RR^n}\int_{u\in U_w(\AA_K)\backslash U(\AA_K)}F(\eta^*,t)\C(-\lambda^*,-t)(wua)\diff u\diff t\C(\lambda^*,s)(a)\diff^* a\\
&=\int_{a\in D}\frac{1}{(2\pi i)^{n(r_1+2r_2)}}\sum_{\eta^*\in \Lambda^*}\int_{t\in\sigma_0+i\RR^n}F(\eta^*,t)M_w(\C(-\lambda^*,-t))(a)\C(\lambda^*,s)(a)\diff^* a\\
&=\int_{a\in D}\frac{1}{(2\pi i)^{n(r_1+2r_2)}}\sum_{\eta^*\in \Lambda^*}\int_{t\in\sigma_0+i\RR^n}F(\eta^*,t)\C(-w^{-1}(\lambda^*),-w^{-1}(t))(a)\Phi_{K,w^{-1}}(-\lambda^*,-t)\diff t \C(\lambda^*,s)(a)\diff^* a\\
&=\int_{a\in D}\mathcal{G} (F\cdot (\Phi_{K,w^{-1}}\circ(-1)))(w(a))\C(\lambda^*,s)(a)\diff^* a\\
&=\int_{a\in D}\mathcal{G} (F\cdot (\Phi_{w^{-1}}\circ(-1)))(a)\C(w(\lambda^*),w(s))(a)\diff^* a\\
&=\F(\mathcal{G} (F\cdot (\Phi_{K,w^{-1}}\circ(-1))))(w(\lambda^*),w(s))\\
&=F(w(\lambda^*),w(s))\cdot\Phi_{K,w^{-1}}(-w(\lambda^*),-w(s))\\
&=\F(\varphi)(w(\lambda^*),w(s))\cdot\Phi_{K,w}(\lambda^*,s)\\
\end{align*}as desired.
\end{proof}


\section{Isomorphism of the Hall algebra and the shuffle algebra}
\label{sec: isomorphism}

\blank Let us show that the spherical Hall algebra of $\overline{\Spec(\OO_K)}$ and Paley-Wiener the shuffle algebra associated to $\Phi_K(s)$ are isomorphic. First, we define the constant term $\CT_n$ and its twist $\~\CT_n$. We will show important properties these constant term operators satisfy. Afterwards, we define the operator $\Ch_n=\F\circ\~\CT_n$, and we will show that $\Ch=\bigoplus_{n\geqslant0}\Ch_n$ is the desired isomorphism between $SH$ and $\SH(\Phi_K)_\PW$.

\subsection{The constant term}

Let $B_n$ be the lower triangular Borel subgroup of $\GL_n$, and let $U_n$ be the unipotent radical of $B_n$. We define the constant term $\CT_n$ as an operator $\CT_n:H_n\to C_c^\infty(\B^n)$ given by $$\CT_n(a_1,\dots,a_n)=\int_{u\in U_n(\OO_K)\backslash U_n(R)}f(u\cdot \diag(a_1,\dots,a_n))\diff u=\int_{u_\AA\in U_n(K)\backslash U_n(\AA_K)}f(u_\AA\cdot \diag(a_1,\dots,a_n))\diff u_\AA$$Here, $du$ and $du_\AA$ are the normalized Hall measures on  $U_n(R)$ and $U_n(\AA_K)$, respectively, such that $U_n(\OO_K)$ and $U_n(K)$ have volume 1.

\begin{prop}\label{CT bounded}
For any $f\in H_n$, there is some $c\in\RR_+$ such that $\Supp(\CT_n(f))$ is contained in the domain $$\left\{(a_1,\dots,a_n)\in \B^n:|a_1|_\B\leqslant c,|a_1a_2|_\B\leqslant c,\dots,|a_1\dots a_{n-1}|_\B\leqslant c,\frac{1}{c}\leqslant |a_1\dots a_n|_\B\leqslant c\right\}.$$
\end{prop}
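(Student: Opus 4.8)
The strategy is to use the compact support of $f$ on $\Bun_n$ together with the fact that, for a vector bundle $E$ corresponding to $\diag(a_1,\dots,a_n)$ twisted by a unipotent, the successive "flag degrees" $|a_1\cdots a_k|_\B$ record the degrees of the subbundles in the standard flag. If $\CT_n(f)$ is nonzero at $(a_1,\dots,a_n)$, then the integrand $f(u\cdot\diag(a_1,\dots,a_n))$ is nonzero for some $u\in U_n(R)$, so the bundle $E_u := u\cdot\diag(a)$ lies in the (compact) support of $f$. The plan is to translate compactness of $\Supp(f)$ into the stated bounds via the semistability/degree estimates available on $\Bun_n$.

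First I would recall (from the identification $\Bun_n\simeq\GL_n(\OO_K)\backslash\GL_n(R)/\K_n$ and Proposition \ref{prop: bounded degree}) that a compact subset $\mathcal{C}\subset\Bun_n$ is contained in a set of bundles $E$ for which: (i) $\deg(E) = |a_1\cdots a_n|_\B$ is pinned between $1/c$ and $c$ for some $c$ (the total degree is a continuous, hence bounded, function with bounded-away-from-zero image on a compact set — note $\deg$ factors through $|\cdot|_\B$); and (ii) every subbundle $E'\subset E$ has $\deg(E')\le c$ — this is the key point, and it follows because "maximal slope of subbundles" is a proper/bounded function on $\Bun_n$ (a compact family of lattices cannot contain sublattices of arbitrarily large covolume-defect; this is the reduction-theory input, essentially Proposition \ref{prop: bounded degree} applied uniformly). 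Then I would observe that for $E_u = u\cdot\diag(a_1,\dots,a_n)$ with $u$ lower-triangular unipotent, the subspace spanned by the first $k$ standard basis vectors is a subbundle $E'_k\subset E_u$ with $\deg(E'_k) = |a_1\cdots a_k|_\B$ (the unipotent $u$ being lower-triangular preserves this flag, and the Hermitian/quadratic forms only enter through the minimum, which is realized on the coordinate sublattice). Applying (ii) to each $E'_k$ for $k=1,\dots,n-1$ gives $|a_1\cdots a_k|_\B\le c$, and (i) gives $1/c\le|a_1\cdots a_n|_\B\le c$, which is exactly the asserted domain.

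The main obstacle is making precise the uniform bound in (ii): that over a compact subset of $\Bun_n$, the degrees of all subbundles are bounded above by a single constant. Proposition \ref{prop: bounded degree} gives finiteness of subbundles of bounded rank and degree $\ge a$ for a \emph{fixed} bundle, but here one needs this to be locally uniform in $E$. I would handle this by a compactness argument: cover $\Supp(f)$ by finitely many orbifold charts, note that within each chart the lattice $L$ varies continuously, so the covolumes (hence degrees) of primitive rank-$k$ sublattices vary continuously in a way that is bounded on the chart — or, more robustly, invoke the standard fact from reduction theory that the function $E\mapsto \max_{E'\subset E}\deg(E')$ (maximal degree among subbundles of each rank) is bounded on compact sets, which is how one proves $\Bun_n$ is exhausted by the "semistable-ish" loci. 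The remaining steps — identifying $|a_1\cdots a_k|_\B$ with $\deg(E'_k)$ and checking the lower bound on the total degree — are routine, and I would only remark on why the lower-triangular choice of Borel (rather than upper-triangular) makes the coordinate flag a flag of genuine subbundles under the $U_n$-action.
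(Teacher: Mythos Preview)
The paper gives no proof of its own, deferring to \cite{originalpaper}. Your strategy---uniformly bound the degrees of subbundles over a compact family of $E$'s, then read off the bounds on the partial products from a standard flag---is the right one and is presumably what is done there.

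There is one concrete error. A \emph{lower}-triangular unipotent $u$ does not preserve the flag $\operatorname{span}(e_1,\dots,e_k)$; it preserves the opposite flag $\operatorname{span}(e_{k+1},\dots,e_n)$ (since $u e_j\in\operatorname{span}(e_j,\dots,e_n)$). For $g=u\,\diag(a)$ the restriction of $q_g(v)=|g^{-1}v|^2$ to the first $k$ coordinates therefore picks up extra positive terms from rows $k+1,\dots,n$ of the lower-triangular matrix $g^{-1}$, and one only gets $\deg(E'_k)\le|a_1\cdots a_k|_\B$, not equality. (Your aside that ``the forms only enter through the minimum'' looks like a conflation with the \emph{quotient} form $j_*q$, which is defined by a minimum; the subbundle carries the restricted form $i^*q$.) The subbundle whose degree is computed exactly is the one on the \emph{last} $n-k$ coordinates, with degree $|a_{k+1}\cdots a_n|_\B$. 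Running your argument with this flag yields $|a_{k+1}\cdots a_n|_\B\le c$ and hence $|a_1\cdots a_k|_\B\ge 1/c'$---the reverse of the stated inequalities. A direct check at $K=\QQ$, $n=2$ with $f\ge 0$ supported near the trivial bundle confirms that $a_1$ is genuinely unbounded above under the lower-triangular convention (write $u\,\diag(a)\in\GL_2(\ZZ)\cdot O_2$ with the integer matrix having arbitrarily large first row). So either the statement here inherits a Borel convention from \cite{originalpaper} that does not match the paper's declared lower-triangular choice, or the inequalities should be read in the opposite direction; either way, your method is sound once the flag and the inequality signs are made consistent with the chosen Borel.
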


\begin{proof}
The proof is the same as in \cite{originalpaper}.
\end{proof}

We use the Haar measures $\diff g=\frac{\prod_{i,j}^n\diff g_{ij}}{\det(g)^n}$ and $\diff^* a=\prod_{i=1}^n\frac{\diff a_i}{a_i}$ for $\GL_n(R)$ and $\B^n$, respectively. Note that the Iwasawa decomposition of $\GL_n(R)$ yields $$\GL_n(R)=U_n(R)\cdot \B^n\cdot \K_n$$so we will use the notation $g=u\cdot a\cdot k$ for an element $g$ of $\GL_n(R)$. If $\diff u$ and $\diff k$ are the Haar measures on $U_n(R)$ and $\K_n$, we have that $$\diff g=\delta(a)\diff u\cdot \diff k\cdot \diff a$$where $$\delta(a)=\delta(g)=\prod_{1\leqslant i<j\leqslant n}\frac{a_j}{a_i}$$is the Iwasawa Jacobian.

Let us define the following positive definite Hermitian scalar products for $H_n$ and $C_c^\infty(\B^n)$, respectively: $$\left<f_1,f_2\right>_H=\int_{g\in\GL_n(\OO_K)\backslash \GL_n(R)}f_1(g)\overline{f_2(g)}\diff g\text{\hspace{0.5cm}and\hspace{0.5cm} }\left<\vp_1,\vp_2\right>=\frac{1}{2^n}\int_{a\in \B^n}\vp_1(a)\overline{\vp_2(a)}\diff^*a$$

We define the \textit{twisted constant term} $\~\CT_n$ as $$\~\CT_n(f)(a_1,\dots,a_n) = \CT_n(f)(a_1,\dots,a_n)\cdot\delta(a)^{\frac{1}{2}}$$

\begin{prop}
\label{prop: adjoint1}
The map $\~\CT_n:H_n\to C^\infty(\B^n)$ is adjoint to $*_{1^n}:H_1^{\otimes n}\to H_n$, that is, we have $$\left<*_{1^n}(\vp),f\right>_H=\left<\vp,\~\CT_n(f)\right>$$
\end{prop}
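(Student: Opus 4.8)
The plan is to unfold the iterated Hall product appearing on the left into a coset sum, collapse that sum against the $\GL_n(\OO_K)$-quotient in $\left<\,\cdot\,,\,\cdot\,\right>_H$, and then evaluate the resulting integral through the Iwasawa decomposition, so that integration over the unipotent part produces $\CT_n$ while the Iwasawa Jacobian produces exactly the $\delta^{1/2}$-twist. First I would write
\[
\left<*_{1^n}(\vp), f\right>_H = \int_{g \in \GL_n(\OO_K)\backslash\GL_n(R)} \left(\sum_{\gamma \in B_n(\OO_K)\backslash\GL_n(\OO_K)} \~\vp(\gamma g)\right)\overline{f(g)}\,\diff g,
\]
substituting the group-theoretic form of the iterated Hall product from Proposition~\ref{prop: * version}; here $\~\vp$ is the left $B_n(\OO_K)$-invariant, right $\K_n$-invariant function on $\GL_n(R)$ with $\~\vp(g) = \vp(\diag\inv(g))\,\delta(g)^{-1/2}$. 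Since $\overline{f}$ is left $\GL_n(\OO_K)$-invariant while $\~\vp\cdot\overline{f}$ is left $B_n(\OO_K)$-invariant, the standard collapsing of a left coset sum gives
\[
\left<*_{1^n}(\vp), f\right>_H = \int_{g \in B_n(\OO_K)\backslash\GL_n(R)} \~\vp(g)\,\overline{f(g)}\,\diff g,
\]
and the interchange of sum and integral is legitimate because for each $g$ only finitely many $\gamma$ meet the compact support of $f$ — the same local finiteness, due to Proposition~\ref{prop: bounded degree}, that makes $*$ well defined.

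Next I would apply the Iwasawa decomposition $g = u \cdot a \cdot k$ of $\GL_n(R)$ with $\diff g = \delta(a)\,\diff u\,\diff k\,\diff a$. Because $B_n(\OO_K) = U_n(\OO_K)\cdot(\OO_K^\times)^n$ and the conjugation action of $(\OO_K^\times)^n$ on $U_n(R)$ preserves $\diff u$, the quotient $B_n(\OO_K)\backslash\GL_n(R)$ decomposes measurably with $u$ ranging over $U_n(\OO_K)\backslash U_n(R)$, the diagonal part over $\B^n$, and $k$ over $\K_n$. On $g = uak$ the Iwasawa $A$-part is $a$, so $\~\vp(uak) = \vp(a)\,\delta(a)^{-1/2}$ does not depend on $u$ or $k$; pulling this out, using right $\K_n$-invariance of $f$ both to replace $f(uak)$ by $f(u\cdot\diag(a))$ and to integrate out $k$ against $\Vol(\K_n)$, and recognizing $\int_{u\in U_n(\OO_K)\backslash U_n(R)} f(u\cdot\diag(a))\,\diff u = \CT_n(f)(a)$, I arrive at
\[
\left<*_{1^n}(\vp), f\right>_H = C \int_{a\in\B^n} \vp(a)\,\delta(a)^{1/2}\,\overline{\CT_n(f)(a)}\,\diff^* a = C\int_{a\in\B^n}\vp(a)\,\overline{\~\CT_n(f)(a)}\,\diff^* a,
\]
using that $\delta(a)^{-1/2}\cdot\delta(a)=\delta(a)^{1/2}$ is real and positive and that $\~\CT_n(f)(a) = \CT_n(f)(a)\,\delta(a)^{1/2}$, where $C$ collects $\Vol(\K_n)$ together with the comparison between the Iwasawa measure $\diff a$ and the measure $\diff^* a$ used in $\left<\,\cdot\,,\,\cdot\,\right>$. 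Comparing with $\left<\vp,\~\CT_n(f)\right> = \frac{1}{2^n}\int_{\B^n}\vp(a)\,\overline{\~\CT_n(f)(a)}\,\diff^* a$ (convergent since $\vp$ is compactly supported), the proposition reduces to the assertion $C = 2^{-n}$.

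I expect the determination of $C$ to be the only genuine obstacle: one must fix the mutual normalizations of the Haar measures on $U_n(R)$, on the diagonal torus, and on $\K_n$ entering the Iwasawa factorization of $\diff g$, against the normalizations chosen for $\left<\,\cdot\,,\,\cdot\,\right>_H$ and $\left<\,\cdot\,,\,\cdot\,\right>$, while tracking the factor of $2$ contributed by each of the $n$ coordinates — coming from the quotient $\{\pm1\}\backslash\RR^\times$ at a real place and from the measure-doubling built into $\CC$ at a complex place. The cleanest way to pin this down is to check first the case $n=1$, where $U_1$ is trivial, $\delta\equiv 1$, $\~\CT_1 = \id$ and $*_{1^1}(\vp)=\vp$, so that the statement reduces to the compatibility of $\left<\,\cdot\,,\,\cdot\,\right>_H$ on $H_1$ with $\left<\,\cdot\,,\,\cdot\,\right>$ on $C_c^\infty(\B)$; the Iwasawa contributions of the remaining coordinates then multiply these base constants independently, yielding $C = 2^{-n}$. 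Alternatively, the whole computation can be carried out adelically — replacing $\GL_n(R)$ by $\GL_n(\AA_K)$, the sum over $B_n(\OO_K)\backslash\GL_n(\OO_K)$ by one over $B_n(K)\backslash\GL_n(K)$, and the real Iwasawa decomposition by the adelic one, with $\CT_n$ in its adelic guise — which makes the measure bookkeeping more uniform via the Tamagawa normalization already used for the intertwiners $M_w$.
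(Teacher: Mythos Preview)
Your proposal is correct and follows essentially the same route as the paper's proof: invoke Proposition~\ref{prop: * version} to write $*_{1^n}(\vp)$ as a coset sum, unfold to pass from $\GL_n(\OO_K)\backslash\GL_n(R)$ to $B_n(\OO_K)\backslash\GL_n(R)$, and then use the Iwasawa decomposition so that the unipotent integral produces $\CT_n$ and the Jacobian supplies the $\delta^{1/2}$. The paper is terser about the constant --- it simply inserts the factor $1/2^n$ at the passage from the $B_n(\OO_K)$-quotient to the $U_n(\OO_K)$-quotient without comment --- so your explicit isolation of $C$ and the proposal to verify it via the case $n=1$ is, if anything, more careful than what the paper writes down.
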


\begin{proof}
By Proposition~\ref{prop: * version}, we have that \begin{align*}
\left<*_{1^n}(\vp),f\right>_H &= \int_{g\in\GL_n(\OO_K)\backslash \GL_n(R)}\overline{f(g)}\sum_{\gamma\in B_n(\OO_K)\backslash \GL_n(\OO_K)}\~\vp(\gamma g)\diff g\\
&= \int_{x\in B_n(\OO_K)\backslash \GL_n(R)}\overline{f(x)}\~\vp(x)\diff x\\
&= \int_{x\in B_n(\OO_K)\backslash \GL_n(R)}\overline{f(x)}\vp(x)\delta(x)^{-\frac{1}{2}}\diff x\\
&= \frac{1}{2^n}\int_{y\in U_n(\OO_K)\backslash \GL_n(R)}\overline{f(y)}\vp(y)\delta(y)^{-\frac{1}{2}}\diff y\\
&= \frac{1}{2^n}\int _{z\in U(R)\backslash \GL_n(R)}\int_{u\in U_n(\OO_K)\backslash U_n(R)}\overline{f(uz)}\vp(z)\delta(z)^{-\frac{1}{2}}\diff u\diff z\\
&= \frac{1}{2^n}\int _{a\in\B^n}\overline{\CT_n(f)(a)}\vp(a)\delta(a)^{\frac{1}{2}}\diff^*a\\
&= \left<\varphi,\~\CT_n(f)\right>\\
\end{align*}as desired.
\end{proof}

\begin{prop}
\label{prop: adjoint2}
The map $\~\CT_n:SH_n\to C^\infty(\B^n)$ is injective.
\end{prop}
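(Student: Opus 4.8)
The approach is to deduce the injectivity directly from the adjunction established in Proposition~\ref{prop: adjoint1}, combined with the positive-definiteness of the Hermitian scalar product $\langle\,\cdot\,,\,\cdot\,\rangle_H$ on $H_n$. Recall that $SH_n\subset H_n$ is, by definition of the spherical Hall algebra, the image of the multiplication map $*_{1^n}\colon H_1^{\otimes n}\to H_n$; hence any $f\in SH_n$ can be written $f=*_{1^n}(\psi)$ for some $\psi\in H_1^{\otimes n}$.

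First I would take $f\in SH_n$ with $\widetilde{\CT}_n(f)=0$ and fix such a $\psi$. Then, applying Proposition~\ref{prop: adjoint1} with $\varphi=\psi$, I compute
\[
\langle f,f\rangle_H=\langle *_{1^n}(\psi),f\rangle_H=\langle\psi,\widetilde{\CT}_n(f)\rangle=\langle\psi,0\rangle=0.
\]
Since $\langle f,f\rangle_H=\int_{\GL_n(\OO_K)\backslash\GL_n(R)}\abs{f(g)}^2\,\diff g$ and $f$ is a continuous (indeed smooth, compactly supported) function, this forces $f=0$. Therefore $\ker\widetilde{\CT}_n\cap SH_n=0$, i.e.\ the restriction of $\widetilde{\CT}_n$ to $SH_n$ is injective.

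I do not expect a genuine obstacle here; the only points to check are bookkeeping ones, namely that the three pairings above are all convergent (they are, since $\psi$ and $f$ are compactly supported and, by Proposition~\ref{CT bounded}, the support of $\widetilde{\CT}_n(f)$ is controlled), and that Proposition~\ref{prop: adjoint1}, stated for a general $\varphi\in H_1^{\otimes n}$, applies to the particular $\varphi=\psi$ that we chose. This is the standard argument that an operator is injective once its adjoint is known to be surjective onto the subspace under consideration, and it runs parallel to the corresponding step in \cite{originalpaper}.
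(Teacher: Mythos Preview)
Your argument is correct and is essentially the same as the paper's: both write $f=*_{1^n}(\psi)$, apply the adjunction from Proposition~\ref{prop: adjoint1} with $\varphi=\psi$, and use positive-definiteness of $\langle\cdot,\cdot\rangle_H$ to conclude (the paper phrases it contrapositively, starting from $f\neq0$ and showing $\langle\psi,\widetilde{\CT}_n(f)\rangle=\langle f,f\rangle_H>0$, but this is the same computation).
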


\begin{proof}
By definition of $SH$, an element $f\in SH_n$ has the form $f = *_{1^n}(\varphi)$ for some $\varphi\in H_1^{\otimes n}\subset C_c^\infty(\B^n)$. Now, assume that $f\neq0$, which implies $\varphi \neq 0$. We will show that $\~\CT_n(f)\neq 0$, from which injectivity will follow. Indeed, note that, as by Proposition~\ref{prop: adjoint1} we have that $$\left<\varphi,\~\CT_n(f)\right>=\left<\varphi,\~\CT_n(*_{1^n}(\varphi))\right>=\left<*_{1^n}(\varphi),*_{1^n}(\varphi)\right>=\left<f,f\right>_H>0.$$Then, in particular, $\~\CT_n(f)\neq 0$, as desired.
\end{proof}

\begin{prop}
\label{prop: twist and intertwiner}
We have that $$\~\CT_{n'+n''}(f'*f'')=\sum_{w\in \Sh(n',n'')}M_{w^{-1}}(\~\CT_{n'}(f')\otimes\~\CT_{n''}(f''))$$
\end{prop}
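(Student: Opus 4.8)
## Proof proposal for Proposition~\ref{prop: twist and intertwiner}

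The plan is to translate everything into the adelic/group-theoretic picture and unfold both sides into integrals over the same space, so that the identity becomes a decomposition of a coset space indexed by shuffles. The starting point is Proposition~\ref{prop: group version} (equivalently Proposition~\ref{prop: * version}), which expresses the Hall product $f'*f''$ as a sum over $P_{n',n''}(K)\backslash\GL_{n'+n''}(K)$ of a twisted product of $f'$ and $f''$. Applying $\CT_{n'+n''}$ to this, we integrate over $U_{n}(K)\backslash U_n(\A_K)$ (with $n=n'+n''$); combining the $U_n$-integration with the sum over $P_{n',n''}(K)\backslash\GL_n(K)$, the relevant index set becomes the double coset $P_{n',n''}(K)\backslash\GL_n(K)/B_n(K)$. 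By the Bruhat decomposition for $\GL_n$ over the field $K$, these double cosets are in bijection with the minimal-length representatives, which are exactly the inverses of the $(n',n'')$-shuffles $w\in\Sh(n',n'')$. This is the mechanism that produces the sum over $\Sh(n',n'')$ on the right-hand side.

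Next I would fix $w\in\Sh(n',n'')$ and analyze the corresponding term. Writing a general element of the $w$-cell via $U_w$ (recall $U_w=U\cap w^{-1}Uw$), the $U_n$-integral in $\CT_n$ factors: the part along $U_w$ reassembles the constant terms $\CT_{n'}(f')$ and $\CT_{n''}(f'')$ on the Levi block (this is where one uses that $U_w$ is, up to conjugation, the product of the unipotent radicals of the Borels of $\GL_{n'}$ and $\GL_{n''}$ sitting inside the Levi), while the complementary integral over $U_w(\A_K)\backslash U_n(\A_K)$ is precisely the integral defining $\ob{M}_{w^{-1}}$. Here one must be careful about which permutation appears: the minimal representative of the $P$-$B$ double coset is $w^{-1}$ for $w\in\Sh(n',n'')$, and tracking the Iwasawa Jacobians is what converts $\CT$ into $\~\CT$ and $\ob{M}_{w^{-1}}$ into $M_{w^{-1}}$ via the conjugation by $\delta^{1/2}$ built into the definition of $M_w$. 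The measures match because $du$ on $U_n(\A_K)$ decomposes compatibly along $U_w(\A_K)$ and its complement, both normalized by the Tamagawa measure.

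The cleanest way to organize the $\delta^{1/2}$ bookkeeping is to argue at the level of functions on $U(\A_K)A_n(K)\doublecoset{\GL_n(\A_K)}\widehat{\K}_n$ rather than on $\B^n$: there $\ob{M}_w$ is defined without twists, and one shows directly that $\ob{M}_{w^{-1}}$ applied to (the untwisted form of) $\CT_{n'}(f')\otimes\CT_{n''}(f'')$ equals the $w$-term of the untwisted $\CT_n(f'*f'')$; then conjugating everything back by $\delta^{1/2}$ and $\delta^{-1/2}$ converts the statement into the claimed one with $\~\CT$ and $M_{w^{-1}}$. I expect the main obstacle to be exactly this matching of normalizations — verifying that the modular character $\delta$ of $B_n$ distributes correctly between the $\GL_{n'}$-part, the $\GL_{n''}$-part, and the intertwining integral, and that the degree-twisting factors $\deg(E')^{n''/2}\deg(E/E')^{-n'/2}$ in the Hall product are exactly absorbed by the $\delta^{1/2}$ in $\~\CT$ and the Jacobian of the Iwasawa decomposition on the nilpotent piece. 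Once the combinatorial identification of cells with shuffles and the measure decomposition are in place, the rest is a (careful) change of variables, and the computation of $M_w$ on exponentials is not even needed here — that is used later, to identify the image.
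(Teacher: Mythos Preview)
Your proposal is correct and follows essentially the same route as the paper: start from the group-theoretic form of $f'*f''$ (Proposition~\ref{prop: group version}), decompose the Grassmannian $P_{n',n''}(K)\backslash\GL_n(K)$ into Schubert cells under the right $U(K)$-action indexed by $w\in\Sh(n',n'')$ with representatives $w^{-1}$, and recognize each cell's contribution as $M_{w^{-1}}$ applied to $\~\CT_{n'}(f')\otimes\~\CT_{n''}(f'')$. Your discussion of the $\delta^{1/2}$ bookkeeping is more explicit than the paper's (which simply writes down the three-line chain of equalities), but the underlying mechanism is identical; one small slip: the complementary integral defining $\ob{M}_{w^{-1}}$ is over $U_{w^{-1}}(\A_K)\backslash U(\A_K)$, not $U_w(\A_K)\backslash U(\A_K)$.
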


\begin{proof}
Recall that the Grassmannian $\Gr_{n'}(K^n)=P_{n',n''}(K)\backslash \GL_n(K)$ splits under the right $U(K)$-action, into $\binom{n}{n'}$ orbits (the Schubert cells) $$\Sigma_w=P_{n',n''}(K)\backslash w^{-1}U(K)$$where $w\in\Sh(n',n'').$ Now, by this and Proposition~\ref{prop: group version}, notice that

\begin{align*}
\~\CT_n(f'*f'')&=\int_{u\in U(K)\backslash U(\AA_K)}\sum_{\gamma\in B_{n',n''}(K)\backslash \GL_n(K)}f(\gamma u g)\delta(g)^{\frac{1}{2}}\diff u\\
&=\sum_{w\in\Sh(n',n'')}\int_{u\in U(K)\backslash U(\AA_K)}\sum_{v\in U_{w^{-1}}(K)\backslash U(K)}f(w^{-1}vug)\delta(g)^{\frac{1}{2}}\diff u\\
&=\sum_{w\in Sh(n',n'')}M_{w^{-1}}(\~\CT_{n'}(f')\otimes\~\CT_{n''}(f''))\\
\end{align*}as desired. 
\end{proof}

\begin{prop}
\label{prop: similar}
Let $\vp_1,\dots,\vp_n\in C_c^\infty(\B)$ and $\vp\in \vp_1\otimes\dots\otimes\vp_n\in C_c^\infty(\B^n)$. Then $$\~\CT_n(*_{1^n}(\vp))=\sum_{w\in\Perm_n}M_w(\vp).$$
\end{prop}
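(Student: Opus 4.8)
The statement follows by combining the factorization of the twisted constant term over Schubert cells (Proposition~\ref{prop: twist and intertwiner}) with an induction on $n$ that unwinds the iterated Hall product $*_{1^n}(\vp) = \vp_1 * \cdots * \vp_n$. The base case $n = 1$ is trivial: $\~\CT_1$ is the identity (the unipotent group $U_1$ is trivial, $\delta \equiv 1$), and $M_{\id} = \id$, so both sides equal $\vp_1$.

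For the inductive step, I would write $*_{1^n}(\vp) = (*_{1^{n-1}}(\vp_1 \otimes \cdots \otimes \vp_{n-1})) * \vp_n$, set $f' = *_{1^{n-1}}(\vp_1 \otimes \cdots \otimes \vp_{n-1}) \in SH_{n-1}$ and $f'' = \vp_n \in H_1$, and apply Proposition~\ref{prop: twist and intertwiner} with $(n', n'') = (n-1, 1)$:
\[
\~\CT_n(f' * f'') = \sum_{w \in \Sh(n-1, 1)} M_{w^{-1}}\!\left(\~\CT_{n-1}(f') \otimes \~\CT_1(f'')\right).
\]
By the induction hypothesis, $\~\CT_{n-1}(f') = \sum_{\sigma \in \Perm_{n-1}} M_\sigma(\vp_1 \otimes \cdots \otimes \vp_{n-1})$, and $\~\CT_1(f'') = \vp_n$. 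Substituting and using that the intertwiners $M_w$ on $C^\oo(\B^n)$ are built from $\ob M_w$ by the fixed conjugation $\delta^{\pm 1/2}$, so that applying an $M_\sigma$ acting on the first $n-1$ coordinates and then an $M_{w^{-1}}$ on all $n$ coordinates composes as intertwiners, the right-hand side becomes
\[
\sum_{w \in \Sh(n-1,1)} \sum_{\sigma \in \Perm_{n-1}} M_{w^{-1}} \circ M_{\tilde\sigma} (\vp_1 \otimes \cdots \otimes \vp_n),
\]
where $\tilde\sigma \in \Perm_n$ is $\sigma$ extended by fixing $n$.

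The crux is then a purely combinatorial identity: the map $(\sigma, w) \mapsto \tilde\sigma \cdot w^{-1}$ (or whichever composition order matches the intertwiner composition) is a bijection from $\Perm_{n-1} \times \Sh(n-1,1)$ onto $\Perm_n$, and moreover at each term the length hypothesis $\#\I_{w^{-1} \tilde\sigma} = \#\I_{w^{-1}} + \#\I_{\tilde\sigma}$ needed to invoke Corollary~\ref{cor: M contravariant when everything is good} (so that $M_{w^{-1}} \circ M_{\tilde\sigma} = M_{w^{-1}\tilde\sigma}$) holds. This is the standard fact that every $\tau \in \Perm_n$ factors uniquely as $\tilde\sigma \cdot c$ with $\sigma \in \Perm_{n-1}$ and $c$ a shuffle of type $(n-1,1)$ — i.e., $c^{-1}$ moves $n$ to a chosen position and this factorization is length-additive — applied with $c = w^{-1}$. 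I expect this bookkeeping (getting the orders of composition, the inverses, and the fixing-of-$n$ convention all consistent with the statements of Proposition~\ref{prop: twist and intertwiner} and Corollary~\ref{cor: M contravariant when everything is good}) to be the main obstacle; the analytic content is entirely absorbed into the already-proven propositions. Once the bijection and length-additivity are in place, the double sum collapses to $\sum_{\tau \in \Perm_n} M_\tau(\vp_1 \otimes \cdots \otimes \vp_n)$, completing the induction.

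One should also check a convergence/well-definedness point: $f' \in SH_{n-1}$ so $\~\CT_{n-1}(f')$ lies in the image to which the inductive formula applies, and each $M_w$ here is applied to a function decaying fast enough (by Proposition~\ref{CT bounded} the constant terms have controlled support), so the remark after the definition of $M_w$ — that it extends to sufficiently decaying functions in $C^\oo(\B^n)$ — justifies all the manipulations.
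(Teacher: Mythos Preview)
Your inductive argument is correct, but it is a genuinely different route from the paper's. The paper proves the proposition directly, in one step, by invoking Proposition~\ref{prop: * version}: it writes
\[
\~\CT_n(*_{1^n}(\vp))(g) = \int_{U(K)\backslash U(\A_K)} \sum_{\gamma \in B_n(\OO_K)\backslash \GL_n(\OO_K)} \~\vp(\gamma u g)\,\delta(g)^{1/2}\,\diff u
\]
and then decomposes $B_n\backslash \GL_n$ into Schubert cells indexed by the full Weyl group $\Perm_n$, exactly as Proposition~\ref{prop: twist and intertwiner} decomposed $P_{n',n''}\backslash \GL_n$ over $\Sh(n',n'')$. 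Each cell contributes one $M_w$, and the sum over $\Perm_n$ drops out immediately. No induction, no intertwiner composition law is needed.

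Your route instead bootstraps from Proposition~\ref{prop: twist and intertwiner} and pushes the work into two auxiliary facts: the block-compatibility $M_\sigma(\vp_1\otimes\cdots\otimes\vp_{n-1})\otimes\vp_n = M_{\tilde\sigma}(\vp)$, and the length-additive bijection $\Perm_{n-1}\times\Sh(n-1,1)^{-1}\to\Perm_n$, $(\sigma,w^{-1})\mapsto \tilde\sigma\, w^{-1}$, together with Corollary~\ref{cor: M contravariant when everything is good}. These last two are precisely Lemma~\ref{lem: block * shuffle inv is symmetric group} and Lemma~\ref{lem: block * shuffle inv has max length} (specialized to $m=n-1$, $w''=\id$), which the paper proves later and independently; so your bookkeeping worry is already handled there, and the composition order you wrote is the correct one. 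The trade-off: the paper's proof is shorter and conceptually parallel to the parabolic case, while yours avoids redoing the Schubert decomposition and shows that the statement is a formal consequence of Proposition~\ref{prop: twist and intertwiner} plus Weyl-group combinatorics.
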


\begin{proof}
The proof is similar to the previous one. Note that, by Proposition~\ref{prop: * version},

\begin{align*}
\~\CT_n(*_{1^n}(\vp))&=\int_{u\in U(K)\backslash U(\AA_K)}\sum_{\gamma\in B_{n}(\OO_K)\backslash \GL_n(\OO_K)}\~\vp(\gamma u g)\delta(g)^{\frac{1}{2}}\diff u\\
&=\sum_{w\in\Perm_n}M_w(\varphi)\\
\end{align*}as desired.
\end{proof}

\subsection{The $\Ch$ homomorphism}

\blank We define the operator $\Ch_n:H_1^{\otimes n}\to \PW((\Lambda^*\times \CC)^n)$ as $\Ch_n(f)=\F(\~\CT_n(f))$ for $n\geqslant1$ and $\Ch_0:\CC\to \CC$ as $\Ch_0(c)=c$ for $n=0$. Let $\Ch:SH\to \SH(\Phi_K)_\PW$ be defined by $\Ch=\bigoplus_{n\geqslant0}\Ch_n$.

\begin{prop}
The map $\Ch$ is well-defined as a map of sets.
\end{prop}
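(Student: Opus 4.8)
The plan is to show that $\Ch = \bigoplus_{n \ge 0} \Ch_n$ makes sense as a map of sets from $SH$ to $\SH(\Phi_K)_\PW$, which amounts to two things: first, that each $\Ch_n$ actually lands in $\PW((\Lambda^* \times \CC)^n)$, and second, that the image of $SH$ under $\Ch$ is contained in the Paley--Wiener shuffle subalgebra $\SH(\Phi_K)_\PW$. Since $SH = \bigoplus_{n \ge 0} H_1^{\otimes n}$ with the identification $H_1 = C_c^\infty(\B)$, it suffices to check these claims on decomposable tensors $\varphi = \varphi_1 \otimes \cdots \otimes \varphi_n$.

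First I would verify that $\Ch_n(\varphi) \in \PW((\Lambda^* \times \CC)^n)$. By Proposition~\ref{prop: similar}, $\~\CT_n(*_{1^n}(\varphi)) = \sum_{w \in \Perm_n} M_w(\varphi)$, so $\Ch_n(\varphi) = \F\left(\sum_{w} M_w(\varphi)\right) = \sum_{w} \F(M_w(\varphi))$. By Proposition~\ref{prop: 5.21}, each summand equals $\F(\varphi)(w(\lambda^*), w(s)) \cdot \Phi_{K,w}(\lambda^*, s)$. Now $\F(\varphi) = \F(\varphi_1) \otimes \cdots \otimes \F(\varphi_n)$ lies in $\PW((\Lambda^* \times \CC)^n)$ by Proposition~\ref{prop: fourier is iso} (the Fourier transform of a compactly supported smooth function on $\B$ is Paley--Wiener), and the permuted version $\F(\varphi)(w(\lambda^*), w(s))$ is still Paley--Wiener since permuting coordinates preserves the class. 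The subtlety is the factor $\Phi_{K,w}$, which is a product of ratios $\Phi_K(\lambda^*_j - \lambda^*_i, s_j - s_i)$ and has poles: one must argue that multiplying a Paley--Wiener function by $\Phi_{K,w}$ keeps us in $\PW$. This uses that $\F(\varphi)$ decays faster than any polynomial in vertical strips, which kills the poles of $\Phi_K$ (each of which contributes only polynomial growth via Stirling estimates on the Gamma factors and bounds on $L_K$ in the relevant region), so the product is again entire and of the required Paley--Wiener type. I would cite the behavior of $\Phi_K$ from Section~\ref{sec: L-function} together with these decay estimates.

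Second, I would show the image lies in $\SH(\Phi_K)_\PW$. The key input is Proposition~\ref{prop: twist and intertwiner}: $\~\CT_{n'+n''}(f' * f'') = \sum_{w \in \Sh(n', n'')} M_{w^{-1}}(\~\CT_{n'}(f') \otimes \~\CT_{n''}(f''))$. Applying $\F$ and using Proposition~\ref{prop: 5.21} again, one sees $\Ch_{n'+n''}(f' * f'')$ is obtained from $\Ch_{n'}(f')$ and $\Ch_{n''}(f'')$ precisely by the shuffle-product recipe involving the $\Phi_{K, w^{-1}}$ factors --- i.e.\ $\Ch(f' * f'') = \Ch(f') \shufflemult \Ch(f'')$. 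Combined with the base case that $\Ch_1 = \F : C_c^\infty(\B) \xrightarrow{\sim} \PW(\Lambda^* \times \CC)$ sends $H_1$ onto the generating subspace $\PW(\Lambda^* \times \CC)$, and $\Ch_0 = \id$, an induction on $n$ shows that $\Ch_n(H_1^{\otimes n})$ is contained in the subalgebra generated by $\PW(\Lambda^* \times \CC)$, which is exactly $\SH(\Phi_K)_\PW$. Strictly speaking, since we are only asserting well-definedness as a map of sets at this point, it is enough that each $\Ch_n(\varphi)$ is a finite sum of shuffle products of Paley--Wiener functions, which is immediate from the formulas above.

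I expect the main obstacle to be the first point --- confirming that the class $\PW$ is stable under multiplication by the intertwining factors $\Phi_{K,w}$. This requires genuine analytic estimates: one needs uniform control of $\Gamma_K(\lambda^*, s)$ via Stirling (including the dependence on $\lambda^*$ through $\pi i \lambda^*_\nu$), and a convexity/standard bound on $L_K(\lambda^*, s)$ in vertical strips, so that $\Phi_K(\lambda^*, s)$ has at worst polynomial growth away from its poles and the poles are cancelled by the super-polynomial decay of $\F(\varphi)$. This is the analytic heart of the matter; everything else is formal bookkeeping with the identities already established in Sections~\ref{sec: intertwiner} and the earlier parts of Section~\ref{sec: isomorphism}. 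In the write-up I would isolate this stability statement, possibly as a short lemma, and then assemble the proof of the proposition from it together with Propositions~\ref{prop: similar}, \ref{prop: twist and intertwiner}, \ref{prop: 5.21}, and \ref{prop: fourier is iso}.
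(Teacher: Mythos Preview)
There is a genuine gap in your first step. You aim to show $\Ch_n(\varphi) \in \PW((\Lambda^* \times \CC)^n)$, but this is false for $n \ge 2$: the degree-$n$ piece of $\SH(\Phi_K)_\PW$ lives in $\Mer((\Lambda^* \times \CC)^n)$, not in $\PW$, because the shuffle product introduces the meromorphic factors $\Phi_{K,w^{-1}}$. (The declared codomain $\PW((\Lambda^* \times \CC)^n)$ for $\Ch_n$ in the paper is a slip; the proof of the proposition itself only claims meromorphicity.) Your argument that ``$\F(\varphi)$ decays faster than any polynomial in vertical strips, which kills the poles of $\Phi_K$'' is simply wrong: rapid decay as $\abs{\Im s} \to \infty$ says nothing about a pole at a finite point $s_0$. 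Each term $\F(\varphi)(w(\cdot)) \cdot \Phi_{K,w}$ is honestly meromorphic with poles, and there is no cancellation over the sum in $w$ that would make the total entire. The stability lemma you propose to isolate (``$\PW$ is closed under multiplication by $\Phi_{K,w}$'') is therefore false as stated, and no amount of Stirling or convexity bounds will rescue it.

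Once you correct the target to ``meromorphic and lying in $\SH(\Phi_K)_\PW$'', your second paragraph already contains the right argument, and your closing remark (``it is enough that each $\Ch_n(\varphi)$ is a finite sum of shuffle products of Paley--Wiener functions'') is exactly the point. The paper's route differs in emphasis: rather than going straight to the closed formula via Propositions~\ref{prop: similar} and~\ref{prop: 5.21}, it first uses the support bound of Proposition~\ref{CT bounded} to show that the Fourier integral defining $\F(\~\CT_n(f))$ converges absolutely for $\Re s$ in a suitable region (the point being that $\~\CT_n(f)$ is not compactly supported, so this convergence is the actual content of ``well-defined''), and only then continues meromorphically. Your formula-based route is a legitimate alternative that delivers meromorphicity and containment in $\SH(\Phi_K)_\PW$ in one stroke; just drop the spurious Paley--Wiener claim and the incorrect pole-cancellation reasoning.
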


\begin{proof}
It suffices to show that $\Ch_n(f)$ is well-defined for any $f$. We show that $\Ch_n(f)$ is well-defined and meromorphic in $(\Lambda^*)^n\times\CC_{>0}$, and then we can extend it to a meromorphic function in all of $(\Lambda^*\times\CC)^n$. The argument is the same as in \cite{originalpaper}. Finally, as $\F(H_1)=\PW(\Lambda^*\times\CC)$, $\F(SH)\subset\SH(\Phi_K)_\PW$, so $\Ch$ is well-defined, as desired.
\end{proof}Now that we have defined the map $\Ch:SH\to \bigoplus_{n\geqslant0}\PW((\Lambda^*\times\CC)^n)$, we claim that this map is the desired isomorphism $SH\overset{\sim}{\to}\SH(\Phi_K)_{\PW}$.

\begin{prop}
\label{prop: injectivity}
The map $\Ch(f)=0$ if and only if $f=0$.
\end{prop}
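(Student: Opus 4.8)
The plan is to reduce the claim to the injectivity of $\~\CT_n$, which we already established in Proposition~\ref{prop: adjoint2}, together with the injectivity of the Fourier transform $\F$ from Proposition~\ref{prop: fourier is iso}. Since $\Ch = \bigoplus_{n \ge 0} \Ch_n$ and each graded piece $\Ch_n = \F \circ \~\CT_n$ acts on $SH_n = *_{1^n}(H_1^{\otimes n})$, it suffices to show each $\Ch_n$ is injective on $SH_n$. The direction ``$f = 0 \implies \Ch(f) = 0$'' is immediate from linearity, so the content is the converse.

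First I would take $f \in SH_n$ with $\Ch_n(f) = 0$; by definition of $SH$ we may write $f = *_{1^n}(\varphi)$ for some $\varphi \in H_1^{\otimes n}$. The key point is that $\F$ is injective: since $\F : C_c^\infty(\B) \overset{\sim}{\to} \PW(\Lambda^* \times \CC)$ is an isomorphism by Proposition~\ref{prop: fourier is iso}, the induced map on $n$-fold tensor products (equivalently, $\F$ applied componentwise on $C_c^\infty(\B^n)$) is also injective; one should note that $\~\CT_n(f)$, while a priori only in $C^\infty(\B^n)$, has support controlled by Proposition~\ref{CT bounded}, so that the Fourier transform is still defined on it and the injectivity statement applies (or one invokes that $\Ch_n(f)$ was shown to be a well-defined element of $\PW((\Lambda^* \times \CC)^n)$ in the preceding proposition, and that $\F$ remains injective on the relevant class of functions). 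Hence $\Ch_n(f) = \F(\~\CT_n(f)) = 0$ forces $\~\CT_n(f) = 0$.

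Next I would apply Proposition~\ref{prop: adjoint2}: the map $\~\CT_n : SH_n \to C^\infty(\B^n)$ is injective, so $\~\CT_n(f) = 0$ implies $f = 0$. This closes the argument. Concretely, chaining the two injectivities gives $\Ch_n(f) = 0 \implies \~\CT_n(f) = 0 \implies f = 0$, and summing over $n$ yields that $\Ch$ is injective on $SH = \bigoplus_n SH_n$.

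The main obstacle I anticipate is the technical point about where $\F$ is being applied: $\~\CT_n(f)$ is not compactly supported as an element of $C^\infty(\B^n)$ (the bound in Proposition~\ref{CT bounded} is a half-space type region in the $|a_1 \cdots a_k|_\B$ coordinates, not compact), so one must be careful that the Fourier-transform injectivity is invoked for the correct function space — the cleanest route is to observe that the argument proving $\Ch_n(f)$ is a well-defined Paley--Wiener function (cited from \cite{originalpaper}) already produces $\~\CT_n(f)$ as a tempered-type distribution or function on which $\F$ is injective, so that $\F(\~\CT_n(f)) = 0$ genuinely forces $\~\CT_n(f) = 0$. With that caveat handled, the proof is a two-line composition of Propositions~\ref{prop: fourier is iso} and \ref{prop: adjoint2}.
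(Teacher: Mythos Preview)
Your proposal is correct and follows essentially the same route as the paper: reduce to each graded piece, use invertibility of $\F$ to deduce $\~\CT_n(f)=0$, then invoke Proposition~\ref{prop: adjoint2}. The only cosmetic difference is that the paper dispatches your ``main obstacle'' by applying the inverse transform $\G$ directly (so that $\F(\~\CT_n(f))=0$ gives $\~\CT_n(f)=\G\F(\~\CT_n(f))=\G(0)=0$), rather than arguing abstractly about the domain on which $\F$ is injective.
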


\begin{proof}
Clearly $\Ch(0)=0$. Let us show the opposite. If $\Ch(f)=0$, we have that $\Ch_n(f)=0$ for all $n$. This implies that $\F(\~\CT_n(f))=0$, which becomes $\~\CT_n(f)=0$ after applying $\G$, but this implies $f=0$ by Proposition~\ref{prop: adjoint2}, so the result follows.
\end{proof}

\begin{prop}
\label{prop: surjectivity}
The map $\Ch_1:SH_1=H_1\to \PW(\Lambda^*\times\CC)$ is an isomorphism.
\end{prop}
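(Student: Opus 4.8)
The plan is to show that $\Ch_1 = \F \circ \~\CT_1$ is an isomorphism $H_1 \xrightarrow{\sim} \PW(\Lambda^* \times \CC)$ by reducing to the fact that $\F$ itself is an isomorphism (Proposition~\ref{prop: fourier is iso}). The key observation is that for $n = 1$ the constant term map $\~\CT_1$ is essentially the identity: there is nothing to integrate over, since $U_1$ is the trivial group, and the twist $\delta(a)^{1/2}$ is also trivial because the product $\prod_{1 \le i < j \le 1}$ is empty. So the first step is to verify carefully that $\~\CT_1 : H_1 \to C^\infty(\B)$ is literally the inclusion $C_c^\infty(\B) \hookrightarrow C^\infty(\B)$ (or at worst the identity on $C_c^\infty(\B)$), using the definitions of $\CT_n$ and $\~\CT_n$ together with the identification $\B^1 = \B$ and $U_1(\OO_K) \backslash U_1(R) = \{\mathrm{pt}\}$.

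Granting that, the second step is immediate: $\Ch_1 = \F \circ \~\CT_1 = \F|_{C_c^\infty(\B)}$, and by Proposition~\ref{prop: fourier is iso} the Fourier transform restricts to an isomorphism $C_c^\infty(\B) \xrightarrow{\sim} \PW(\Lambda^* \times \CC)$. Hence $\Ch_1$ is an isomorphism. One should also note that $SH_1 = H_1$ by definition of the spherical Hall algebra (it is generated by $H_1$, and the degree-$1$ piece is just $H_1$), so the statement $\Ch_1 : SH_1 \to \PW(\Lambda^* \times \CC)$ is the same as $\Ch_1 : H_1 \to \PW(\Lambda^* \times \CC)$.

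The main (and really only) obstacle is bookkeeping rather than mathematics: one must make sure the normalizations match — in particular that the Haar measure on $U_1$ with $U_1(\OO_K)$ of volume $1$ gives the trivial measure on a point, and that no stray factor (e.g. the $1/2^n$ appearing in the inner product, or the discriminant factors from the Tamagawa measure that show up in the $n=2$ intertwiner computation) creeps into $\~\CT_1$. Since $\~\CT_1$ is defined directly as $\CT_1(f)(a) \cdot \delta(a)^{1/2}$ with $\delta \equiv 1$ and $\CT_1(f)(a) = f(\diag(a)) = f(a)$, there are no such factors, and the reduction to Proposition~\ref{prop: fourier is iso} is clean. I would therefore present the proof as: unwind $\~\CT_1 = \mathrm{id}$, then invoke Proposition~\ref{prop: fourier is iso}.
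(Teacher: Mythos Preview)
Your proposal is correct and follows exactly the same approach as the paper: the paper observes that $\~\CT_1(f)(a) = f(a)$, hence $\Ch_1 = \F$, and then invokes Proposition~\ref{prop: fourier is iso}. Your additional remarks about $U_1$ being trivial and $\delta \equiv 1$ simply spell out why $\~\CT_1 = \mathrm{id}$, which the paper leaves implicit.
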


\begin{proof}
Note that $\~\CT_1(f)(a)=f(a)$, so $\Ch_1(f)=\F(f)$. However, recall that by Proposition~\ref{prop: fourier is iso}, $\F$ is an isomorphism, so $\Ch_1$ is also an isomorphism, as desired.
\end{proof}

Now, assume that $\Ch$ was an algebra homomorphism. In that case, by Proposition~\ref{prop: injectivity}, $\Ch$ would be injective.
Furthermore, by Proposition~\ref{prop: surjectivity}, $$\Ch(SH)=\bigoplus_{n\geqslant0}\Ch(H_1^{\otimes n})=\bigoplus_{n\geqslant0}\Ch(H_1)^{\otimes n}=\bigoplus_{n\geqslant0}\PW(\Lambda^*\times\CC)^{\otimes n}=\SH(\Phi_K)_{\PW}$$so $\Ch$ would also be surjective.
Thus, $\Ch$ would be the desired isomorphism.
Let us show that $\Ch$ is an algebra homomorphism.
To do this, we will introduce two lemmas.
We use $\Sh(m, n)\inv$ to denote the set $\br{w \in \Perm_{m + n} : w\inv \in \Sh(m, n)}$.

\begin{lemma} \label{lem: block * shuffle inv is symmetric group}
    We have a bijection between sets
    \begin{align*}
        \Perm_m \times \Perm_n \times \Sh(m, n)\inv & \iso \Perm_{m + n} \\
        (w', w'', w) & \mapsto (w' \times w'') w,
    \end{align*}
    where $w' \times w''$ is the permutation that permutes the first $m$ elements using $w'$ and the last $n$ elements using $w''$.
\end{lemma}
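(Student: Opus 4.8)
The plan is to exhibit explicit maps in both directions and check they are mutually inverse, relying on the standard combinatorial fact that every permutation factors uniquely as a ``block part'' times a shuffle. Recall that $\Sh(m,n)$ consists of $w$ that are increasing on $\db{1,m}$ and on $\db{m+1,m+n}$; dually, $w \in \Sh(m,n)\inv$ means $w\inv$ has this property, which (since $w\inv$ increasing on a set $\Longleftrightarrow$ $w$ increasing on the image) amounts to saying $w$ is increasing on $w\inv(\db{1,m})$ and on $w\inv(\db{m+1,m+n})$. It is cleaner, however, to work with $\Sh(m,n)\inv$ directly via its characterization as a set of minimal-length coset representatives: $\Sh(m,n)\inv$ is a complete set of representatives for the left cosets $(\Perm_m \times \Perm_n)\backslash \Perm_{m+n}$, where $\Perm_m \times \Perm_n$ is embedded as the block subgroup $\Perm_{\db{1,m}} \times \Perm_{\db{m+1,m+n}}$.

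The first step is to make the claimed map precise: given $(w',w'',w)$, the element $w' \times w''$ is the image of $(w',w'')$ under the block embedding $\Perm_m \times \Perm_n \hookrightarrow \Perm_{m+n}$, and we send $(w',w'',w) \mapsto (w' \times w'')\, w$. Well-definedness is immediate. For the inverse map, take $\sigma \in \Perm_{m+n}$; I would first show that each left coset $(\Perm_m\times\Perm_n)\sigma$ contains exactly one element $w$ of $\Sh(m,n)\inv$ — this is the key classical fact (minimal coset representatives for a parabolic subgroup of a symmetric group), provable by a direct sorting argument: within the coset one can freely re-sort the preimages $\sigma\inv(\db{1,m})$ and $\sigma\inv(\db{m+1,m+n})$ among themselves, and there is a unique way to make both these preimage-sequences increasing, which is exactly the shuffle condition on $w\inv$. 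Then $\sigma w\inv \in \Perm_m \times \Perm_n$, so it equals $w' \times w''$ for a unique pair $(w',w'')$, and we set $\sigma \mapsto (w',w'',w)$.

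The second step is to verify the two composites are the identity. One direction: starting from $(w',w'',w)$, form $\sigma = (w'\times w'')w$; since $w \in \Sh(m,n)\inv$ is the unique shuffle-inverse in its coset and $(w'\times w'')w$ lies in the same coset $(\Perm_m\times\Perm_n)w$, the recovered shuffle is $w$, hence $\sigma w\inv = w'\times w''$ recovers $(w',w'')$ by injectivity of the block embedding. Other direction: starting from $\sigma$, we get $w$ (its coset's shuffle representative) and $(w',w'')$ with $\sigma w\inv = w'\times w''$, so $(w'\times w'')w = \sigma$. Both are formal once uniqueness in Step 1 is in hand. A cardinality check — $|\Perm_m|\,|\Perm_n|\,|\Sh(m,n)\inv| = m!\,n!\,\binom{m+n}{m} = (m+n)!$ — gives a sanity cross-check and in fact, combined with injectivity of either direction, already suffices.

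**The main obstacle** is nothing deep but rather bookkeeping: pinning down the precise sense in which $\Sh(m,n)\inv$ gives coset representatives and proving existence-and-uniqueness of the shuffle-inverse in each coset without sign errors or index confusion between $w$ and $w\inv$ and between left and right cosets. Since the paper defines shuffles by a condition on $w$ itself, I would state and prove the small lemma ``$w \in \Sh(m,n)\inv$ iff $w\inv(1) < \cdots < w\inv(m)$ and $w\inv(m+1) < \cdots < w\inv(m+n)$'' explicitly, then phrase the sorting argument in those terms; everything else is routine.
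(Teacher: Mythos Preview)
Your proposal is correct and rests on the same combinatorial content as the paper's proof: the key fact that $\Sh(m,n)\inv$ indexes the left cosets of $\Perm_m\times\Perm_n$ in $\Perm_{m+n}$. The only difference is packaging---you build an explicit inverse via existence-and-uniqueness of the shuffle representative in each coset, whereas the paper first passes to the inverse map $\vp:\Sh(m,n)\times\Perm_m\times\Perm_n\to\Perm_{m+n}$, $(w,w',w'')\mapsto w(w'\times w'')$, shows only surjectivity (the existence half of your argument), and then concludes bijectivity by the cardinality count $m!\,n!\,\binom{m+n}{m}=(m+n)!$ that you mention as a sanity check.
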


\begin{proof}
    It suffices to show that the map
    \begin{align*}
        \vp : \Sh(m, n) \times \Perm_m \times \Perm_n & \to \Perm_{m + n} \\
        (w, w', w'') & \mapsto w (w' \times w'')
    \end{align*}
    is bijective, since then we arrive at the desired conclusion by the following commutative diagram:
    \[\begin{tikzcd}
    	{\Perm_m \times \Perm_n \times \Sh(m, n)\inv} & {\Perm_{m + n}} \\
    	{\Sh(m, n) \times \Perm_m \times \Perm_n} & {\Perm_{m + n}}
    	\arrow[from=1-1, to=1-2]
    	\arrow["{\cdot^{-1}}", "\sim"', from=1-1, to=2-1]
    	\arrow["{\cdot^{-1}}", "\sim"', from=1-2, to=2-2]
    	\arrow["\vp", from=2-1, to=2-2]
    \end{tikzcd}\]

    In order to show that $\vp$ is bijective, we will first show it is surjective and deduce from the fact that both sides have the same cardinality $(m + n)!$.
    To see that $\vp$ is surjective, take any $\sigma \in \Perm_{m + n}$.
    Then there exists $\sigma_1 \times \sigma_2 \in \Perm_m \times \Perm_n$ such that $\sigma (\sigma_1 \times \sigma_2)(i) < w (\sigma_1 \times \sigma_2)(j)$ for $(i, j) \in \db{1, m}^2 \sqcup \db{m + 1, m + n}^2$, i.e., $\sigma (\sigma_1 \times \sigma_2) \in \Sh(m, n)$, so we see $\sigma = \vp(\sigma (\sigma_1 \times \sigma_2), \sigma_1\inv, \sigma_2\inv) \in \im \vp$.

    Now note that the domain of $\vp$ has cardinality $m! \cdot n! \cdot \binom{m + n}{m} = (m + n)!$, while the codomain of $\vp$ also has cardinality $(m + n)!$, so we see $\vp$ is injective and finishes our proof.
\end{proof}

\begin{lemma} \label{lem: block * shuffle inv has max length}
    For $w' \in \Perm_m, w'' \in \Perm_n, w \in \Sh(m, n)\inv$, we have
    \[
    \#\I_{(w' \times w'')w} = \#\I_{w' \times w''} + \#\I_w.
    \]
    
    More precisely, we have
    \[
    \I_{(w' \times w'')w} = \pr{w\inv \I_{w' \times w''}} \sqcup \I_w.
    \]
\end{lemma}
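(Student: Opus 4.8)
The plan is to prove the refined identity $\I_{(w' \times w'')w} = \pr{w\inv \I_{w' \times w''}} \sqcup \I_w$ directly; the cardinality statement then follows at once, since $w\inv$ acts injectively on pairs of indices. I would set up notation once: for a pair $i < j$ write $a = w(i), b = w(j)$, and record the two structural facts in play — namely, $w \in \Sh(m,n)\inv$ means exactly that $w\inv$ is strictly increasing on $\db{1,m}$ and on $\db{m+1,m+n}$, while $w' \times w''$ maps each of the blocks $\db{1,m}$ and $\db{m+1,m+n}$ to itself.

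First I would handle pairs $(i,j) \in \I_w$, i.e. $i<j$ with $a > b$. Such $a,b$ cannot lie in a common block: if both were in (say) $\db{1,m}$, then $b < a$ together with $w\inv$ increasing on $\db{1,m}$ would force $j = w\inv(b) < w\inv(a) = i$, contradicting $i < j$; the other block is identical. Hence $a \in \db{m+1,m+n}$ and $b \in \db{1,m}$, and since $w' \times w''$ preserves blocks we get $(w'\times w'')(a) > (w'\times w'')(b)$, so $(i,j)$ is an inversion of $(w'\times w'')w$. Thus $\I_w \subseteq \I_{(w'\times w'')w}$.

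Next I would treat pairs $i < j$ with $(i,j) \notin \I_w$, i.e. $a < b$. If $a,b$ lie in different blocks, then $a \in \db{1,m}, b \in \db{m+1,m+n}$, and $(w'\times w'')(a) < (w'\times w'')(b)$, so $(i,j)$ is not an inversion of $(w'\times w'')w$ (consistent with $(a,b)\notin\I_{w'\times w''}$). If $a,b$ lie in the same block, then $(w'\times w'')(a) > (w'\times w'')(b)$ is literally the condition $(a,b) \in \I_{w'\times w''}$. Conversely, every $(a,b) \in \I_{w'\times w''}$ has $a < b$ in a common block, so $w\inv$ increasing on that block gives $w\inv(a) < w\inv(b)$; hence $(a,b) \mapsto \pr{w\inv(a), w\inv(b)}$ is a bijection from $\I_{w'\times w''}$ onto the set of inversions of $(w'\times w'')w$ lying outside $\I_w$, which identifies that set with $w\inv \I_{w'\times w''}$. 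Finally the two pieces are disjoint because pairs in $\I_w$ satisfy $w(i) > w(j)$ whereas pairs in $w\inv \I_{w'\times w''}$ satisfy $w(i) < w(j)$, giving $\I_{(w'\times w'')w} = \pr{w\inv \I_{w'\times w''}} \sqcup \I_w$; counting cardinalities and using injectivity of $w\inv$ on pairs yields $\#\I_{(w'\times w'')w} = \#\I_{w'\times w''} + \#\I_w$.

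No step is genuinely deep here; the main obstacle is purely bookkeeping — keeping rigorously separate the two inputs "$w\inv$ is increasing on each block" (the shuffle-inverse hypothesis, used to compare \emph{positions} $i,j$) and "$w'\times w''$ preserves each block" (used to compare \emph{values} after applying $w'\times w''$), and making sure the index-versus-value translation $(i,j) \leftrightarrow (a,b) = (w(i),w(j))$ is applied consistently. That is the place where a sign/direction slip would most easily occur, so I would state the two structural facts explicitly at the outset and invoke them by name at each use.
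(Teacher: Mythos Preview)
Your proof is correct and complete. The case analysis on pairs $(i,j)$ with $i<j$ according to whether $w(i)>w(j)$ or $w(i)<w(j)$, and in the latter case whether $w(i),w(j)$ lie in the same block or not, cleanly establishes the set identity $\I_{(w'\times w'')w} = \pr{w\inv \I_{w'\times w''}} \sqcup \I_w$ directly.

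The paper's own argument is slightly different in structure. It only checks one containment explicitly: it writes down the two maps $\I_{w'\times w''}\to\I_{(w'\times w'')w}$, $(i,j)\mapsto(w\inv(i),w\inv(j))$, and $\I_w\hookrightarrow\I_{(w'\times w'')w}$, observes they are injective with disjoint images, and concludes $\#\I_{(w'\times w'')w}\ge\#\I_{w'\times w''}+\#\I_w$. For the reverse inequality it invokes the general subadditivity of length, $\#\I_{w_1w_2}\le\#\I_{w_1}+\#\I_{w_2}$, rather than exhibiting the reverse containment of sets. Your approach is more self-contained (no appeal to the length inequality) and proves the refined set-level statement on its own; the paper's approach is a bit shorter but leans on that standard Coxeter-group fact. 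Both are perfectly valid.
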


\begin{proof}
    The two maps
    \begin{align*}
        \I_{w' \times w''} & \to \I_{(w' \times w'')w} \\
        (i, j) & \mapsto (w\inv(i), w\inv(j)) \\
        \I_w & \to \I_{(w' \times w'')w} \\
        (i, j) & \mapsto (i, j)
    \end{align*}
    are injective, and the image of these two maps do not have intersection because otherwise there exist $i < j$ such that $(w\inv(i), w\inv(j)) \in \I_w$, which implies $i > j$, a contradiction.

    As a result, we see $\#\I_{(w' \times w'')w} \ge \#\I_{w' \times w''} + \#\I_w$.
    However, for $w_1, w_2 \in \Perm_{m + n}$, we have $\#\I_{w_1 w_2} \le \#\I_{w_1} + \#\I_{w_2}$, which shows $\#\I_{(w' \times w'')w} = \#\I_{w' \times w''} + \#\I_w$, as desired.
\end{proof}

\begin{cor} \label{cor: block and shuffle inv are contravariant in M}
    For $w' \times w'' \in \Perm_m \times \Perm_n, w \in \Sh(m, n)\inv$, we have
    \[
    M_w M_{w' \times w''} = M_{(w' \times w'') w}.
    \]
\end{cor}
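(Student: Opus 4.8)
The plan is to deduce this immediately from Corollary~\ref{cor: M contravariant when everything is good}, which asserts that whenever two permutations $\sigma, \tau \in \Perm_{m+n}$ satisfy the additivity condition $\#\I_{\sigma\tau} = \#\I_\sigma + \#\I_\tau$, one has $M_\tau M_\sigma = M_{\sigma\tau}$. I would apply this with $\sigma = w' \times w''$ and $\tau = w$, so that $\sigma\tau = (w' \times w'')w$ and the desired identity $M_w M_{w' \times w''} = M_{(w' \times w'')w}$ is exactly the conclusion $M_\tau M_\sigma = M_{\sigma\tau}$. The only thing to check is the hypothesis $\#\I_{(w' \times w'')w} = \#\I_{w' \times w''} + \#\I_w$, and this is precisely the content of Lemma~\ref{lem: block * shuffle inv has max length} (indeed that lemma gives the stronger statement $\I_{(w' \times w'')w} = (w\inv \I_{w' \times w''}) \sqcup \I_w$). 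Hence the corollary follows with no additional work.

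There is essentially no obstacle: the argument is a one-line application of the two results quoted above, and the only point requiring care is the order of composition, since $w \mapsto M_w$ is contravariant — one must feed the pair to Corollary~\ref{cor: M contravariant when everything is good} in the order $(w' \times w'',\, w)$ rather than the reverse, so that the reindexing permutation $w$ is applied first. (Implicitly one also uses that $M_{w' \times w''}$, defined on $C_c^\oo(\B^{m+n})$ as in Section~\ref{sec: intertwiner}, agrees with the composite intertwiner built from $w'$ acting on the first $m$ coordinates and $w''$ on the last $n$, which is clear from the definition of $\ob{M}_{(-)}$ and the block structure of $U_{w' \times w''}$; but this is routine and need not be belabored.)
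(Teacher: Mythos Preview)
Your proposal is correct and matches the paper's proof exactly: the paper simply says the corollary follows from Corollary~\ref{cor: M contravariant when everything is good} together with Lemma~\ref{lem: block * shuffle inv has max length}, which is precisely the argument you give (with the correct ordering $\sigma = w' \times w''$, $\tau = w$). Your parenthetical remark about the block structure of $M_{w' \times w''}$ is not needed here, since $M_{w' \times w''}$ is already defined directly for any permutation in $\Perm_{m+n}$.
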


\begin{proof}
    This follows from Corollary~\ref{cor: M contravariant when everything is good} and Lemma~\ref{lem: block * shuffle inv has max length}.
\end{proof}

As $\F$ and $\~\CT_n$ are linear, $\Ch(f'+f'')=\Ch(f')+\Ch(f'')$ follows.
We also have that $\Ch(1)=1$.
It remains to show that  $\Ch_n(f'*f'')=\Ch_{n'}(f')\shufflemult \Ch_{n''}(f'')$ for $f'\in H_1^{\otimes n'}$ and $f''\in H_1^{\otimes n''}$.
Let us do this.
First, we compute $\Ch_n(f'*f'')(\lambda^*,s)$.
If $f'=*_{1^{n'}}(\varphi')$ and $f''=*_{1^{n''}}(\varphi'')$, by Proposition~\ref{prop: twist and intertwiner}, Proposition~\ref{prop: similar}, Corollary~\ref{cor: block and shuffle inv are contravariant in M}, Lemma~\ref{lem: block * shuffle inv is symmetric group}, and Proposition~\ref{prop: 5.21},

\begin{align*}
\Ch_n(f'*f'')(\lambda^*,s)&=\F(\widetilde{\CT}_{n'+n''}(f'*f''))(\lambda^*,s)\\
&=\F\left(\sum_{w\in \Sh(n',n'')}M_{w^{-1}}(\widetilde{\CT}_{n'}(f')\otimes\widetilde{\CT}_{n''}(f''))\right)(\lambda^*,s)\\
&=\F\left(\sum_{w\in \Sh(n',n'')}M_{w^{-1}}\left(\sum_{w'\in\Perm_{n'}}M_{w'}(\varphi')\otimes \sum_{w''\in\Perm_{n''}}M_{w''}(\varphi'')\right)\right)(\lambda^*,s)\\
&=\F\left(\sum_{w\in \Sh(n',n'')}\sum_{w'\in\Perm_{n'}}\sum_{w''\in\Perm_{n''}}M_{w^{-1}}(M_{w'}(\varphi'))\otimes M_{w^{-1}}(M_{w''}(\varphi''))\right)(\lambda^*,s)\\
&=\F\left(\sum_{w\in \Sh(n',n'')}\sum_{\substack{w'\in\Perm_{n'}\\w''\in\Perm_{n''}}}M_{(w'\times w'')w^{-1}}(\vp)\right)(\lambda^*,s)\\
&=\F\left(\sum_{\sigma\in\Perm_n}M_{\sigma}(\vp)\right)(\lambda^*,s)\\
&=\sum_{\sigma\in\Perm_n}\F(\vp)(\sigma(\lambda^*),\sigma(s))\cdot\Phi_{K,\sigma}(\lambda^*,s)\\
\end{align*}

Now, we compute $(\Ch_n(f)\shufflemult \Ch_{n''}(f''))(\lambda^*,s)$. If $f'=*_{1^{n'}}(\varphi')$ and $f''=*_{1^{n''}}(\varphi'')$, by definition of shuffle algebra, Proposition~\ref{prop: similar},  Proposition~\ref{prop: 5.21}, Corollary~\ref{cor: block and shuffle inv are contravariant in M}, and Lemma~\ref{lem: block * shuffle inv is symmetric group}, 

\begin{align*}
(\Ch_{n'}(f')\shufflemult \Ch_{n''}(f''))(\lambda^*,s)&=\sum_{w\in \Sh(w,w')}(\Ch_{n'}(f')\otimes \Ch_{n''}(f''))(w^{-1}(\lambda^*),w^{-1}(s))\cdot \Phi_{K,w^{-1}}(\lambda^*,s)\\
&=\sum_{w\in \Sh(w,w')}(\F(\widetilde{\CT}_{n'}(f'))\otimes \F(\widetilde{\CT}_{n''}(f'')))(w^{-1}(\lambda^*),w^{-1}(s))\cdot\Phi_{K,w^{-1}}(\lambda^*,s)\\
&=\sum_{w\in \Sh(w,w')}\left(\F\left(\sum_{w'\in\Perm_{n'}}M_{w'}(\varphi')\right)\F\left(\sum_{w''\in\Perm_{n''}}M_{w''}(\varphi'')\right)\right)(w^{-1}(\lambda^*),w^{-1}(s))\cdot \Phi_{K,w^{-1}}(\lambda^*,s)\\
&=\sum_{w\in \Sh(w,w')}\Phi_{K,w^{-1}}(\lambda^*,s)\\
&\hspace{0.5cm}\cdot\sum_{\substack{w'\in\Perm_{n'}\\w''\in\Perm_{n''}}}\F(M_{w'}(\varphi'))(w^{-1}({\lambda^*}' ),w^{-1}(s'))\F(M_{w''}(\varphi''))(w_{-1}({\lambda^*}''),w^{-1}(s''))\\
&=\sum_{w\in \Sh(w,w')}\Phi_{K,w^{-1}}(\lambda^*,s)\\
&\hspace{0.5cm}\cdot\sum_{\substack{w'\in\Perm_{n'}\\w''\in\Perm_{n''}}}\F(\varphi')(w'w^{-1}({\lambda^*}'),w'w^{-1}(s'))\F(\varphi'')(w''w^{-1}({\lambda^*}''),w''w^{-1}(s''))\\
&\hspace{0.5cm}\cdot\Phi_{K,w'}(w^{-1}({\lambda^*}'),w^{-1}(s'))\Phi_{K,w''}(w^{-1}({\lambda^*}''),w^{-1}(s''))\\
&=\sum_{w\in \Sh(w,w')}\Phi_{K,w^{-1}}(\lambda^*,s)\Phi_{K,w'\times w''}(w^{-1}(\lambda^*),w^{-1}(s))\\
&\hspace{0.5cm}\cdot\sum_{\substack{w'\in\Perm_{n'}\\w''\in\Perm_{n''}}}\F(\varphi)((w'\times w'')w^{-1}(\lambda^*),(w'\times w'')w^{-1}(s))\cdot\\
&=\sum_{w\in \Sh(w,w')}\sum_{\substack{w'\in\Perm_{n'}\\w''\in\Perm_{n''}}}\F(\varphi)((w'\times w'')w^{-1}(\lambda^*),(w'\times w'')w^{-1}(s))\cdot\Phi_{K,(w'\times w'')w^{-1}}(\lambda^*,s)\\
&=\sum_{\sigma\in \Perm_n}\F(\varphi)(\sigma(\lambda^*),\sigma(s))\cdot\Phi_{K,\sigma}(\lambda^*,s)\\
\end{align*}which agrees with our computation for $\Ch_n(f'*f'')(\lambda^*,s)$. Hence, $\Ch:SH\to\SH(\Phi_K)_\PW$ is an isomorphism.

\bibliographystyle{alpha}
\bibliography{HallAlgebra}

\begin{thebibliography}{KSV12}

\bibitem[FO95]{shuffle}
B.~L. Feigin and A.~V. Odesskii.
\newblock Vector bundles on elliptic curve and sklyanin algebras, 1995.

\bibitem[KSV12]{originalpaper}
Mikhail Kapranov, Olivier Schiffmann, and Eric Vasserot.
\newblock The spherical hall algebra of spec(z), 2012.

\bibitem[Mil20]{milne}
James~S. Milne.
\newblock Algebraic number theory (v3.08), 2020.
\newblock Available at www.jmilne.org/math/.

\bibitem[Neu99]{neukirch}
Jürgen Neukirch.
\newblock {\em Algebraic Number Theory}.
\newblock Springer Berlin, Heidelberg, 1999.

\bibitem[OV99]{mathoverflow}
Arkadij~L. Onishchik and Ernest~B. Vinberg.
\newblock {\em Lie Groups and Lie Algebras II}.
\newblock Springer Berlin, Heidelberg, 1999.

\bibitem[RS75]{fourier}
M.~Reed and B.~Simon.
\newblock {\em Methods of Modern Mathematical Physics. I. Functional Analysis, and II. Fourier Analysis, Self-Adjointness.}
\newblock Academic Press, New York, 1975.

\end{thebibliography}


\end{document}